\newcommand{\E}{\mathbb{E}}
\newcommand{\R}{\mathbb{R}}
\newcommand{\N}{\mathbb{N}}
\newcommand{\PP}{\mathbb{P}}
\newcommand{\cF}{\mathcal{F}} 
\numberwithin{equation}{section}
\newtheorem{theo}{Theorem}[section]
\newtheorem{cor}[theo]{Corollary}
\newtheorem{rem}[theo]{Remark}
\newtheorem{propo}[theo]{Proposition}
\newtheorem{lemma}[theo]{Lemma}
\newcommand{\sgc}{\begin{color}{red}}
\newcommand{\cgs}{\end{color}}
\newcommand{\eps}{\varepsilon}
\newcommand{\Tr}{\operatorname{Tr}}
\title{Weak error estimates of Galerkin approximations for the stochastic Burgers equation driven by additive trace-class noise}
\author{Charles-Edouard Bréhier}
\address{Universite de Pau et des Pays de l'Adour, E2S UPPA, CNRS, LMAP, Pau, France}
\email{charles-edouard.brehier@univ-pau.fr}
\author{Sonja Cox}
\address{Korteweg-de Vries Institute for Mathematics,
         University of Amsterdam,
         Postbus 94248,
         NL-1090 GE Amsterdam, Netherlands}
\email{s.g.cox@uva.nl}
\author{Annie Millet}
\address{SAMM, Universit\'e Paris 1 Panth\'on Sorbonne,
90 Rue de Tolbiac, 75013 PARIS, France \& LPSM, UMR 8001 }
\email{annie.millet@univ-paris1.fr}
\keywords{Stochastic Burgers' equation, Galerkin approximation, weak convergence rates, Kolmogorov equation}
\subjclass[2020]{%
    60H15, 
    60H35, 
    65C30
}
\begin{document}

\begin{abstract}
We establish weak convergence rates for spectral Galerkin approximations of the stochastic  viscous Burgers equation driven by additive trace-class noise. Our results complement the known results regarding strong convergence; we obtain essential weak convergence rate 2. As expected, this is twice the known strong rate. The main ingredients of the proof are novel regularity results on the solutions of the associated Kolmogorov equations.
\end{abstract}

\maketitle

\section{Introduction}

Stochastic partial differential equations (SPDEs) have become a popular tool in modelling many complex phenomena. 
The design and analysis of numerical methods for SPDEs has thus become an important field of research. This is a challenging task which has been performed by many authors in the last decades.
A general theory has been developed for abstract and general semilinear SPDEs of parabolic type, when the nonlinearities are assumed to be globally Lipschitz continuous. 
However, for more complicated models  depending on nonlinearities with polynomial growth, such as the stochastic Burgers and Navier--Stokes equations, and the stochastic Allen--Cahn or Cahn--Hilliard equations, 
more care is needed in the construction and in the analysis of the schemes. Many questions regarding the rates of convergence of temporal and spatial discretization schemes for these types of problems remain open. 
The objective of this work is to fill a gap in the literature: we establish weak convergence rates for a spatial discretization of the 
stochastic Burgers equation driven by additive trace-class noise, performed using the spectral Galerkin method.

We consider the one-dimensional stochastic  viscous Burgers equation with homogeneous Dirichlet boundary conditions and additive trace class noise, i.e., the equation formally given by:
\begin{equation}\label{eq:Burgers_intro}
\left\lbrace
\begin{aligned}
&\partial_tX(t,z)=\Delta X(t,z)+\nabla(X(t,z)^2)+\dot{W}^Q(t,z),\quad t>0, z\in(0,1),\\
&X(t,0)=X(t,1)=0,\quad t>0,\\
&X(0,z)=X_0(z),\quad z\in(0,1),
\end{aligned}
\right.
\end{equation}
where $\nabla$ and $\Delta$ denote the first and second order derivatives with respect to $z$, and where the random initial value $X_0$ 
is assumed to be given. The evolution is driven by Gaussian noise which is white in time and colored in space; 
more precisely, we let $\bigl(W^Q(t)\bigr)_{t\ge 0}$ be a $L^2(0,1)$-valued Wiener process given by
\begin{equation}  \label{eq:defWQ_intro}
W^Q(t) = \sum_{k\in\N}\sqrt{q_k}W^{(k)}(t)e_k,\quad \forall~t\in\R^+,
\end{equation}
where $\bigl(e_k\bigr)_{k\in\N}$ is a complete orthonormal system of the Hilbert space $L^2(0,1)$, $\bigl(W^{(k)}( \cdot)\bigr)_{k\in\N}$ is a 
sequence of independent standard real-valued Wiener processes independent of $X_0$, and $\bigl(q_k\bigr)_{k\in\N}$ is a sequence of nonnegative real numbers such that $\sum_{k\in\N}q_k<\infty.$

For rigorous statements and analysis, it is convenient to interpret the stochastic partial differential equation~\eqref{eq:Burgers_intro} as a stochastic evolution equation  in the framework developed in~\cite{DaPratoZabczyk:1992},  i.e., we now consider
\begin{equation}\label{eq:SPDE}
\left\lbrace
\begin{aligned}
&dX(t)=AX(t)\,dt+B(X(t))\,dt+\,dW^Q(t),\quad \forall t\ge 0,\\
&X(0)=X_0,
\end{aligned}
\right.
\end{equation}
where the unknown $\bigl(X(t)\bigr)_{t\ge 0}$ is a $L^2(0,1)$-valued stochastic process. Here $A$ is the Dirichlet Laplace operator and  $B(X)=\nabla(X^2)$, for details see Section \ref{ssec:setting}.
 Well-posedness, moment and exponential moment bounds on the solutions are standard results;  they are recalled in Sections~\ref{sec:preliminaries} and ~\ref{sec:bounds} for the readers' convenience.

In this work, we consider the spectral Galerkin approximation
\begin{equation}\label{eq:Galerkin-Burgers}
\left\lbrace
\begin{aligned}
&dX_{M}(t)=AX_{M}(t)\,dt+P_{M} B(X_{M}(t))\,dt+P_{M}\,dW^Q(t),\quad \forall ~ t\ge 0,\\
&X_{M}(0)=P_{M} X_0,
\end{aligned}
\right.
\end{equation}
where $P_M$ ($M\in \{1,2,\ldots \}$) denotes the orthogonal projection operator in $L^2(0,1)$ onto the span of the first $M$ eigenfunctions of $A$.
The strong convergence rate of $X_M$ against $X$ was established in~\cite{HutzenthalerJentzen:2020}; it follows from~\cite[Section 3.2.3]{HutzenthalerJentzen:2020} 
that if $X_0\equiv x_0\in W^{1,2}$, then for all $p\in [1,\infty)$ and all $\alpha \in [0,1)$ one has
\begin{equation}\label{eq:strong_conv_Galerkin}
    \underset{M\in \N,\, t\in [0,T]}\sup~M^{\alpha}\E\big[ \| X(t) - X_{M}(t)\|_{L^2}^p\big]^{\frac{1}{p}}
    < \infty.
\end{equation} 
 The above means that the spectral Galerkin approximation of the stochastic Burgers equation driven by additive trace-class noise 
 converges with essential strong rate $1$.

The objective of this work is to establish weak error estimates, which are concerned with the rate of convergence of
\[
\big|\E[\varphi(X(T))]-\E[\varphi(X_{M}(T))]\big|
\]
where $\varphi:L^2(0,1)\to\R$ is  an arbitrary sufficiently smooth  function, i.e., twice continuously differentiable with bounded first and 
second order derivatives. Since such functions are globally Lipschitz continuous, the rate of convergence of the weak error is at least equal to 
the strong order. However, in many situations, as will be reviewed below, the weak rate of convergence is twice the strong rate.\par 

The primary goal of this work is to prove that the weak convergence rate for the spectral Galerkin approximation~\eqref{eq:Galerkin-Burgers} 
 applied to the stochastic Burgers equation~\eqref{eq:SPDE} is indeed twice the strong rate, which, to the best of our knowledge, is a new result. 
 More specifically, we prove that if there exist $\gamma_0\in (0,\infty)$ and $p\in (32,\infty)$ such that $\E [\exp(\gamma_0 \| X_0\|_{L^2}^2 ) ]< \infty$ 
 and $\E\bigl[\| \nabla X_0 \|_{L^2}^{p}\bigr]<\infty$ (clearly this holds if $X_0\equiv x_0\in W^{1,2}$  is non-random),
 and if $\varphi\in C^2(H,\R)$ has bounded first and second derivatives, then for all $\alpha \in [0,1)$ one has
\begin{equation}
    \sup_{M\in \N} M^{2\alpha} \big|\E[ \varphi(X(T))] - \E[\varphi(X_M(T))]\big|
    < \infty.
\end{equation}
See Theorem \ref{thm:weakrate} for a precise and slightly more general statement. 
Consequently, the spectral Galerkin approximation of the stochastic Burgers equation driven by additive trace-class noise converges with 
essential weak rate $2$, which is twice the strong rate. 
There are strong indications that the strong rate 1 and weak rate 2 are optimal, for details see Remark~\ref{rem:notoptimal}. 

Aside from aforementioned work~\cite{HutzenthalerJentzen:2020}, strong convergence rates for approximations of the Burgers' equation~\eqref{eq:Burgers_intro} can also be found in~\cite{HutzenthalerEtAl:2019}, where space-time discretizations are considered. Strong convergence rates (with rate $1/2$ in time and rate $1$ in space, for an error measured in $L^p(\Omega,L^2)$) of a fully discrete scheme for the stochastic Burgers equation driven by multiplicative noise can be found in the recent work~\cite{HutzenthalerLink:2022}. We refer for instance to~\cite{MR4454935,MR2860933} for further results on approximations for the stochastic Burgers equation.
Let us also mention that convergence results for the stochastic Burgers equation driven by space-time white noise (which is not covered in this work) have been obtained, for instance in~\cite{AlabertGyongy:2006,BlomkerJentzen:2013,JentzenSalimovaWelti:2019,MazzonettoSalimova:2020}. 

In order to obtain weak convergence rates, we need to establish appropriate regularity properties for solutions to the Kolmogorov equations associated with the stochastic evolution equation~\eqref{eq:SPDE}. 
We refer to Theorem~\ref{theo:Kol_reg} for a precise statement of this result, which is one of the main contributions of this work and may be useful in other contexts. 
Instead of considering infinite-dimensional problems, it is convenient to consider the Galerkin approximation~\eqref{eq:Galerkin-Burgers} with arbitrary $M$, 
however, notice that the upper bounds below are independent of dimension $M$. Set $u_M(t,x)=\E[\varphi(X_M^x(t)]$, where $X_M^x$ denotes the solution of~\eqref{eq:Galerkin-Burgers} with initial value $X_M(0)=P_Mx$.
The mapping $u_M$ is then a solution to the Kolmogorov equation, see~\eqref{eq:Kolmogorov_eqn}. 
Theorem~\ref{theo:Kol_reg} provides upper bounds for the first and second order derivatives of $u_M(t,.)$, which can be described as follows. 
For all $\alpha\in[0,1)$, one has upper bounds of the following type for the first order derivative:
\[
|Du_M(t,x).h| \leq C_{\alpha,\delta,\epsilon}(T,Q,\varphi) t^{-\alpha} e^{\epsilon \|x\|_{L^2}^2} \bigl(1+\|(-A)^{\frac{1}{4}+\delta}x\|_{L^2}^6\bigr)
\|(-A)^{-\alpha} h\|_{L^2}.
\]
Moreover, for all $\alpha,\beta\in[0,1)$ such that $\alpha+\beta<1$, one has upper bounds of the following type for the second order derivative:
\[
|D^2u_M(t,x).(g,h)| \leq C_{\alpha,\beta,\delta,\epsilon}(T,Q,\varphi) t^{-(\alpha+\beta)} e^{\epsilon \|x\|_{L^2}^2}
\bigl(1+\|(-A)^{\frac{1}{4}+\delta}x\|_{L^2}^{16}\bigr)\|(-A)^{-\alpha} g\|_{L^2} \|(-A)^{-\beta} h\|_{L^2}.
\]
In the above, $\delta,\epsilon\in(0,\infty)$ denote arbitrarily small auxiliary parameters, and $C_{\alpha,\delta,\epsilon}(T,Q,\varphi)$ and $C_{\alpha,\beta,\delta,\epsilon}(T,Q,\varphi)$ 
are positive real numbers, which are independent of $t\in(0,T]$ and of $x,g,h$. To the best of our knowledge, 
the regularity properties above on solutions to Kolmogorov equations associated with the stochastic Burgers equation driven by additive trace-class noise are new.

Regularity results for Kolmogorov equations associated with SPDEs have been considered first in the context of numerical analysis for SPDEs in~\cite{Debussche:2011},
for parabolic semilinear SPDEs with globally Lipschitz nonlinearities. Similar results have been obtained in~\cite[Theorem 1.1]{AnderssonEtAl:2019}. 
Both~\cite{Debussche:2011} and~\cite{AnderssonEtAl:2019} deal with multiplicative (space-time) white noise, which requires stronger conditions on $\alpha$ and $\beta$:
one needs $\alpha<1/2$ and $\alpha+\beta<1/2$, respectively. Using a novel expression for the derivatives of the solution to a Kolmogorov equation  and Malliavin calculus techniques,
these conditions on $\alpha$ and $\beta$ were weakened in~\cite{MR3862147}. 
Note that~\cite{MR3862147} deals with SPDEs where the term  `$+\nabla(X(t,z)^2)$' in~\eqref{eq:Burgers_intro} 
is replaced by `$+\nabla (b(X(t,z))$' for a smooth and globally Lipschitz mapping $b$ (and the additive trace-class noise in~\eqref{eq:Burgers_intro} 
is replaced by multiplicative space-time white noise). Despite the fact that~\cite{MR3862147} does not allow quadratic growth in $b$ 
(as is the case in our setting), it turns out to be possible to adapt the strategy from~\cite{MR3862147} to deal with the quadratic growth of the nonlinearity of the Burgers equation, 
the details are given in Section~\ref{sec:Kolmogorov} (with no need of Malliavin calculus techniques).\par 

The strategy introduced in~\cite{MR3862147} has also been employed in~\cite{MR4132896} to prove similar regularity properties for the 
solutions to Kolmogorov equations associated to the stochastic Allen--Cahn equation, and in~\cite{MR3984308} for a more general class of equations with monotone nonlinearities.
Recently, this strategy has also been applied in~\cite{BrehierCuiWang:2022} for the solutions to Kolmogorov equations associated to the stochastic Cahn--Hilliard equations.
Compared with the works cited above, note that we have some exponential dependence with respect to $\|x\|_{L^2}^2$ (with arbitrarily small $\epsilon\in(0,\infty)$), which is a specific feature of the stochastic Burgers equation, 
and which explains the need to deal with exponential moment bounds for the solutions of~\eqref{eq:SPDE} and its Galerkin approximation~\eqref{eq:Galerkin-Burgers}.

Other types of regularity properties for solutions to Kolmogorov equations associated with the stochastic Burgers equation have been obtained in the literature.
In \cite[Section~4]{MR1722786} it is shown that when  $Q=A^{-\beta}$ with $\beta\in(\frac12,1)$, for $M\in\N$, $t\in [0,T]$, $x,h\in H_M$ and a  smooth function $\varphi$, there exists 
$C_{\epsilon,T,\varphi}\in (0,\infty)$ such that
\[ 
Du_M(t,x).(h)
\le C_{\epsilon,T,\varphi} \, t^{-\frac12-\frac{\beta}{2}}\, e^{\epsilon\|x\|_{L^2}^2}\,   \|h\|_{L^2}. 
\]
Note that  in our setting the operators $A$ and $Q$ need not commute. Moreover, the condition $Q=A^{-\beta}$ with $\beta\in(\frac12,1)$ provides a non-degeneracy condition on the noise, that is not considered in this work. Moreover, \cite[Theorem 2.2 (iv)]{RocknerSobol:2006} relates certain weighted norms of $u(t,\cdot)$ 
to such norms of $\varphi$.

We are aware that a weak error analysis for a \emph{full} discretization of the Burgers equation is desirable 
(i.e., one should consider both temporal and spatial discretizations,
as well as finite-dimensional approximations of the noise). However, obtaining an error analysis for a temporal discretization still requires a substantial amount of work. 
It would also be interesting to establish weak error estimates for approximations of the stochastic Burgers equation driven by space-time white noise. 
However, this would require more advanced techniques. We therefore leave those open questions for future investigations.

The paper is organized as follows. In Section \ref{sec:preliminaries} we describe the
model, recall some basic properties of the operators $A$, of the bilinear term $B$, and some classical inequalities. 
Section \ref{sec:bounds} describes $L^p$moments of the Galerkin approximation $X_M$ in  various functional spaces, among which $L^\infty([0,T]\times [0,1])$ and $C^\gamma([0,T];W^{\lambda,2})$ in terms of moments of the initial condition, as well
as exponential moments of the $L^2$-norm uniformly in time. The corresponding proofs are given in the Appendix \ref{app}. 
In section \ref{sec:Kolmogorov} we prove the regularity properties of the Kolmogorov equation, which a crucial tool to obtain the weak convergence rate proven in Section \ref{weak-rate}.

\section{Preliminaries}\label{sec:preliminaries}

\subsection{Notation}\label{ssec:notation}

Let $\N=\{1,2,\ldots\}$ denote the set of (strictly) positive integers and $\N_0=\{0\}\cup \N$. For any $x\in [0,\infty)$ we denote by $\lfloor x \rfloor = \sup\{ n\in \N_0\colon n\leq x\}$ the integer part of $x$. For any $x,y\in \R$ we set $x\wedge y = \min(x,y)$ and $x\vee y = \max(x,y)$.\par 

Given two Banach spaces $(X,\left\| \cdot \right\|_X)$ and $(Y,\left\| \cdot \right\|_Y)$ we let $(\mathcal{L}(X,Y),\left\| \cdot \right\|_{\mathcal{L}(X,Y)})$ denote the Banach space of bounded linear operators from $X$ to $Y$; we set $\mathcal{L}(X)=\mathcal{L}(X,X)$. The dual of a Banach space $X$ is denoted by $X^*$, and the adjoint of an operator $C\in\mathcal{L}(X,Y)$ is denoted by $C^*$. Given a Banach space $X$, a measure space $(S,\Sigma,\mu)$, and $p\in [1,\infty)$ we let $L^p(S,X)$ denote the Bochner space of strongly measurable, $p$-integrable functions from $S$ to $X$; if $X$ is separable the strong and weak measurability are equivalent by Pettis' theorem.\par

Given a Hilbert space $(H,\langle \cdot, \cdot \rangle)$, we let $\mathcal{L}_1(H)$ denote the space of trace-class operators on $H$ and we let $\mathcal{L}_2(H)$ denote the space of Hilbert--Schmidt operators on $H$.

We let $C^k(H,\R)$ denote the Banach space of $k$-times continuously (Fr\'echet) differentiable functions. 
We denote the (real) Lebesgue spaces on $(0,1)$ by $(L^p,\|\cdot\|_{L^p})$, $p\in [1,\infty]$; the inner product on $L^2$ is denoted by $\langle \cdot , \cdot \rangle_{L^2}$.

We denote the Sobolev spaces with smoothness parameter $k\in \N$ and integrability parameter $p\in [1,\infty]$ by $W^{k,p}$, for a definition see e.g.~\cite[Section 1.11.2]{Yagi:2010}. 
We denote the fractional (real) Slobodeckij--Sobolev spaces on $(0,1)$ with smoothness parameter $\alpha\in (0,\infty)$ and integrability parameter $p\in [1,\infty)$ by 
$(W^{\alpha,p},\| \cdot \|_{W^{\alpha,p}})$ (see e.g.~\cite[Section 2]{NezzaEtAl:2012}),
i.e., for $f\in W^{\lfloor \alpha \rfloor, p}$ we set 
\[  \| f\|_{W^{\alpha ,p}} = \| f\|_{W^{\lfloor \alpha \rfloor,p}}  +  \Big(\int_0^1 \int_0^1 \frac {
\vert f^{\lfloor \alpha \rfloor}(y)-f^{\lfloor \alpha \rfloor}(z)\vert^p}{\vert y-z\vert^{1+(\alpha - \lfloor \alpha \rfloor) p}} \,dy\,dz \Big)^{\frac{1}{p}},
\]
and we set $W^{\alpha,p}=\{ f\in W^{\lfloor \alpha \rfloor, p }\colon \| f \|_{W^{\alpha,p}}<\infty\}$. 
 For $p\in [1,\infty)$ and $\alpha > \frac{1}{p}$ we set $W^{\alpha,p}_0=\{f\in W^{\alpha,p}\colon f(0)=f(1)=0\}$ (this makes sense owing to the Sobolev inequality~\eqref{eq:SobolevII} below). 

Given a function $\phi:H\to\R$ which is twice (Fr\'echet) differentiable, for all $x\in H$ we let $D\phi(x)\colon H \ni h \mapsto D\phi(x).h$ and 
$D^2\phi(x)\colon H\times H \ni (g,h)  \mapsto D^2\phi(x).(g,h)$ 
denote the first and second order derivatives of $\phi$ at $x\in H$.

\subsection{Some inequalities}\label{ssec:inequalities}

We recall some useful inequalities. Firstly, on the domain $(0,1)$ the Poincar\'e inequality:
\begin{equation}\label{eq:poincare}
 \|x\|_{L^2} \leq 
\tfrac{1}{\sqrt{2}} \|\nabla x\|_{L^2},\quad \forall x\in W^{1,2}_0,
\end{equation}
which is an immediate consequence of the fundamental theorem of calculus and H\"older's inequality.

We shall also frequently use the Sobolev inequalities, namely, for all $p\in[1,\infty)$ and $q\in(p,\infty)$, one has $W^{\frac{1}{p}-\frac{1}{q},p}\subseteq L^q$ and there exists $C_{p,q}\in(0,\infty)$ such that 
\begin{equation}\label{eq:SobolevI}
\| x \|_{L^q} \leq C_{p,q} \| x \|_{W^{\frac{1}{p}-\frac{1}{q},p}}, \quad \forall x\in W^{\frac{1}{p}-\frac{1}{q},p}, 
\end{equation}
see, e.g.,~\cite[Theorem 6.5]{NezzaEtAl:2012}.
Moreover, for all $\alpha\in (0,1)$ and all $p\in ((1-\alpha)^{-1},\infty)$, one has $W^{\alpha+\frac{1}{p},p}\subseteq C^\alpha$ and there exists $C_{\alpha,p}\in(0,\infty)$ such that
\begin{equation}\label{eq:SobolevII}
\| x \|_{C^{\alpha}} \leq C_{\alpha,p}\| x \|_{W^{\alpha+\frac{1}{p},p}}, \quad \forall x\in W^{\alpha+\frac{1}{p},p}, 
\end{equation}
see, e.g.,~\cite[Theorem 8.2]{NezzaEtAl:2012}.
Since $W^{1,2} \subseteq C^{\alpha}$ for all $\alpha\in (0,\frac{1}{2})$,  
we easily deduce that $W^{1,2}$ is an algebra: there exists $C\in (0,\infty)$ such that 
\begin{equation}\label{eq:H1_algebra}
\| x_1 x_2 \|_{W^{1,2}} \leq C \| x_1 \|_{W^{1,2}} \| x_2 \|_{W^{1,2}}, \quad \forall x_1,x_2\in W^{1,2}.
\end{equation}
Finally, we recall a special case of the Gagliardo-Nirenberg inequalities (see, e.g.,~\cite[Theorem 1.37]{Yagi:2010}): for all $p\in (2,\infty)$, there exists $C_p\in(0,\infty)$ such that 
\begin{equation}\label{eq:GN}
\| x \|_{L^p} \leq  C_p \| x \|_{L^2}^{\frac{1}{2}+\frac{1}{p}} \| x \|_{W^{1,2}}^{\frac{1}{2}-\frac{1}{p}},\quad  \forall x\in W^{1,2}.
\end{equation}

\subsection{Setting}\label{ssec:setting}
Throughout this paper we fix a terminal  time $T\in(0,\infty)$.

We let $A\colon W^{1,2}_0 \cap W^{2,2} \subseteq L^2 \rightarrow L^2$ denote the Dirichlet Laplace operator on $L^2$, i.e.,
\begin{equation}\label{eq:defLaplace}
A x = - \sum_{k\in \N} (\pi k)^2 \langle x, h_k\rangle_{L^2} h_k,\quad 
\forall x\in W^{2,2}\cap W^{1,2}_0,
\end{equation}
where $h_k=\sqrt{2}\sin(k\pi \cdot)$ for all $k\in\N$. The eigenvectors $\bigl(h_k\bigr)_{k\in\N}$ define a complete orthornormal system of $L^2$. Note that $A$ generates an analytic $C_0$-semigroup $(e^{tA})_{t\geq 0}$ on $L^2$. 

We let $B\colon W^{1,2}\times W^{1,2}\rightarrow L^1$ denote the bilinear operator defined by
\begin{equation}
    B[x_1,x_2]=x_1 \nabla x_2 + x_2 \nabla x_1,\quad \forall x_1,x_2\in W^{1,2}, 
\end{equation}
and we set $B(x)=B[x,x]$ for $x\in W^{1,2}$. Note that an integration by parts yields the identity $\langle B(x),x\rangle_{L^2} =0$ for all $x\in W^{1,2}$. 

We fix a positive trace class self-adjoint linear operator $Q\in \mathcal{L}_1(L^2)$ and we let $(e_k)_{k\in \N}$ be a complete orthonormal system of eigenvectors of $Q$ corresponding to the eigenvalues $(q_k)_{k\in \N}$. The bounded linear operators $Q\in\mathcal{L}_1(L^2)$ and $\sqrt{Q}\in \mathcal L_2(L^2) $ are given by
\[
Qx=\sum_{k\in\N}q_k\langle x,e_k\rangle_{L^2}e_k,\quad \sqrt{Q}x=\sum_{k\in\N}\sqrt{q_k}\langle x,e_k\rangle_{L^2}e_k,\quad \forall~x\in L^2.
\]

We fix a filtered probability space $(\Omega,\cF,\PP,(\cF_t)_{t\in [0,T]})$, which is assumed to be large enough to allow for the existence of a $(\cF_t)_{t\in [0,T]}$-Brownian motion $W^Q\colon [0,T]\times \Omega \rightarrow L^2$ with covariance operator $Q$: this means that there exists a sequence of independent standard $(\cF_t)_{t\in [0,T]}$-Brownian motions $(W^{(k)})_{k\in \N}$ such that 
\begin{equation}\label{eq:defWQ}
   W^Q(t) = \sum_{k\in \N} \sqrt{q_k} W^{(k)}(t) e_k ,\quad \forall  t\in [0,T]. 
\end{equation}

For every $p\in [4,\infty)$ and every $\cF_0$-measurable $X_0 \in L^p(\Omega,L^2)$ there exists an unique continuous (up to versions) $(\cF_t)_{t\in [0,T]}$-adapted process $X\colon [0,T]\times \Omega \rightarrow H$ such that $\PP(X_t\in W^{1,2}_0)=1$ for all $t\in [0,T]$,
\begin{equation*}
\E\Bigl[ \sup_{t\in [0,T]}\|X(t)\|_{L^2}^{p}+
\int_{0}^{T}\|\nabla X(t)\|_{L^2}^2 \,dt\,\Bigr]<\infty,
\end{equation*}
and 
\begin{equation}\label{eq:Burgers}
X(t) = X_0 + \int_0^{t} \bigl[ A X(s) + B(X(s)) \bigr]\,ds + W^{Q}(t), \quad \forall t\in [0,T].
\end{equation}

The above follows from~\cite[Theorem 1.1 and Remark 3.1]{LiuRockner:2010} (in the notation of~\cite{LiuRockner:2010} we take $V=W^{1,2}_0$, $H=L^2$, $\rho(x) = \| x \|_{L^4}^4$, $\alpha=\beta=2$, $\theta =1$, $f_t \equiv 2\vee \| Q \|_{L_1(H)}$, $K=2$; by
~\eqref{eq:GN} there exists a $C\in (0,\infty)$ such that $\rho(x) \leq C \| x \|_{L^2}^{2}\| x \|_{W^{1,2}}^2$ for all $x\in W^{1,2}_0$). We refer to the process $X$ as the \emph{solution to the Burgers equation (with initial value $X_0$)}.\par

For $M\in\N$, set
\[
H_M = \operatorname{span}(h_1,\ldots,h_M)\subseteq W^{1,2}_{0} \cap W^{2,2}
\]
and let
 $P_M\in \mathcal{L}(L^2)$ denote the orthogonal projection onto $H_M$.
 We define the linear operator $A_M\in \mathcal{L}(L^2,H_M)$ and the bilinear operator $B_M\colon L^2 \times L^2 \rightarrow H_M $
by $A_M = P_M A P_M $ ($=A P_M$) and 
\begin{equation}\label{eq:defB_M}
B_M[x_1,x_2]= P_M B[P_Mx_1, P_Mx_2], \quad \forall x_1,x_2 \in L^2.
\end{equation}
We set $B_M(x)= B_M[x,x]$ for all $x\in L^2$.
It again follows from~\cite[Remark 3.1 and Theorem 1.1]{LiuRockner:2010} (see also~\eqref{eq:Burgerslocmono} below, or see~\cite[Theorem 3.1.1]{PrevotRockner:2007}) that for every $p\in [4,\infty)$ and every $\cF_0$-measurable $X_0 \in L^p(\Omega,L^2)$ there exists unique (up to versions) stochastic process $X_M\colon [0,T]\times \Omega \rightarrow H_M$ such that
\begin{equation}\label{eq:GalsolBurgers}
    X_M(t) = P_M X_0 + 
    \int_{0}^{t}\bigl[ A_M X_M(s) + B_M(X_M(s))\bigr]\,ds + P_MW^Q(t),\quad \forall  t\in [0,T],
\end{equation} 
(in the notation of~\cite{LiuRockner:2010}, we take $V=(H_M,\| \cdot \|_{W^{1,2}})$, $H=(H_M, \| \cdot \|_{L^2})$, $\rho(x) = \| x \|_{L^4}^4$, $\alpha=\beta=2$, $\theta =1$, $f_t \equiv  2 \vee \| Q \|_{L_1(H)}$, $K=2$; note that \cite[Hypothesis 4]{LiuRockner:2010} can be verified uniformly in $M$ using~\eqref{eq:PMnabla} below).
Moreover, the solution $X_M$ can be written using the following  mild formulation: 
\begin{equation}\label{eq:mildGalsolBurgers}
    X_M(t) = e^{tA} P_M X_0 + 
    \int_{0}^{t} e^{(t-s)A} B_M(X_M(s))\,ds + 
    \int_0^t e^{(t-s)A} P_M\,dW^Q(s),\quad \forall  t\in [0,T].
\end{equation}
We refer to the processes $X_M$, $M\in \N$, as the \emph{Galerkin approximations} of $X$. 

\subsection{Properties of the operators \texorpdfstring{$A$, $B$, and $P_M$}{\emph{A}, \emph{B}, \emph{PM}}}\label{ssec:properties}

For any $\alpha \in [0,\infty)$ we can define the fractional powers $(-A)^{\alpha} \colon D((-A)^{\alpha}) \subseteq L^2 \rightarrow L^2$ of $-A$, by
\begin{equation*}
D((-A)^{\alpha}) = \Big\{ x \in L^2 \colon  \sum_{k\in \N} (\pi k)^{4\alpha} \langle x, h_k\rangle_{L^2}^2 < \infty\Big\} 
\end{equation*}
and
\begin{equation}\label{eq:def_fracpowA}
    (-A)^{\alpha}x = \sum_{k\in \N} (\pi k)^{2\alpha} \langle x, h_k\rangle_{L^2} h_k, \quad \forall  x\in D((-A)^{\alpha}).
\end{equation}
Furthermore, we define $(-A)^{-\alpha} \in \mathcal{L}(((-A)^{\alpha})^*,L^2)$ to be the adjoint of $A^{\alpha}$. Parseval's identity implies that
\begin{equation}\label{eq:A_fracpownorminc}
\| (-A)^{\alpha} x \|_{L^2} \leq  \| (-A)^{\beta} x \|_{L^2}
\end{equation} 
for all $\alpha,\beta\in (-\infty,\infty)$ satisfying $\alpha<\beta$ and all $x\in  D((-A)^{\beta\vee 0})$. 

\begin{propo}\label{prop:DA_Sobolev_equiv} 
The fractional domains of $-A$ satisfy
\begin{equation}\label{eq:equinorm}
D((-A)^{\alpha}) = 
\begin{cases}
    W^{2\alpha,2}; & \alpha \in (0,\tfrac{1}{4}),\\
    W^{2\alpha,2}_{0}; & \alpha \in (\tfrac{1}{4},1)
\end{cases}
\end{equation}
with equivalence in norms.
\end{propo}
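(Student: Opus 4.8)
The plan is to reduce everything to a statement about the sine--Fourier coefficients $c_k=\langle x,h_k\rangle_{L^2}$ and the odd, $2$-periodic extension of $x$. By the definition \eqref{eq:def_fracpowA} and Parseval's identity,
\[
\|(-A)^\alpha x\|_{L^2}^2=\sum_{k\in\N}(\pi k)^{4\alpha}c_k^2,
\]
so $x\in D((-A)^\alpha)$ precisely when $\sum_{k}k^{4\alpha}c_k^2<\infty$. Let $\widetilde{x}=\sum_{k\in\N}\sqrt 2\,c_k\sin(\pi k\,\cdot)$ denote the odd, $2$-periodic extension of $x$ to the torus $\mathbb T=\mathbb R/2\mathbb Z$. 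Writing $\sin(\pi k\,\cdot)$ in terms of $e^{\pm i\pi k\,\cdot}$ and comparing the spectral weights $(\pi k)^{4\alpha}$ with the periodic weights $(1+(\pi k)^2)^{2\alpha}$, Parseval on $\mathbb T$ yields the norm equivalence
\[
\|x\|_{L^2}^2+\|(-A)^\alpha x\|_{L^2}^2\ \simeq\ \|\widetilde x\|_{H^{2\alpha}(\mathbb T)}^2 .
\]
Hence $x\in D((-A)^\alpha)$ if and only if $\widetilde x\in H^{2\alpha}(\mathbb T)$, with equivalence of norms, and the whole statement is reduced to identifying the image of the odd periodic extension inside $H^{2\alpha}(\mathbb T)$.

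The second step is to transfer regularity between $\mathbb T$ and $(0,1)$. Since $x=\widetilde x|_{(0,1)}$ and the restriction operator $H^{2\alpha}(\mathbb T)\to W^{2\alpha,2}$ is bounded (with $W^{2\alpha,2}$ the Slobodeckij space of Subsection~\ref{ssec:notation}), one always obtains $x\in W^{2\alpha,2}$ together with $\|x\|_{W^{2\alpha,2}}\lesssim\|\widetilde x\|_{H^{2\alpha}(\mathbb T)}$. For the converse inclusion I would use boundedness of the odd periodic extension $x\mapsto\widetilde x$ as a map into $H^{2\alpha}(\mathbb T)$, which is where the boundary conditions enter. Here one invokes the classical behaviour of reflection operators: odd reflection preserves $H^{s}$ for $s\in(0,\tfrac12)$ with no condition, while for $s\in(\tfrac12,2)$ it does so precisely for functions whose zeroth-order trace vanishes at the reflection points, i.e. $x(0)=x(1)=0$; the trace is well defined in this range by \eqref{eq:SobolevII}. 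In the regime $2\alpha\in(\tfrac12,2)$ the condition $x(0)=x(1)=0$ is moreover forced in the forward direction, since $H^{2\alpha}(\mathbb T)\hookrightarrow C(\mathbb T)$ makes $\widetilde x$ continuous, and oddness about $0$ together with $2$-periodicity then forces $\widetilde x(0)=\widetilde x(1)=0$. Combining the two directions gives $D((-A)^\alpha)=W^{2\alpha,2}$ for $2\alpha<\tfrac12$ and $D((-A)^\alpha)=W^{2\alpha,2}_0$ for $2\alpha>\tfrac12$, as claimed.

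I expect the main obstacle to be the borderline analysis of the extension operator, in particular verifying that $2\alpha=\tfrac12$ (that is, $\alpha=\tfrac14$) is the only exponent in $(0,2)$ that must be excluded. At $2\alpha=\tfrac12$ the zeroth-order trace is not defined on $W^{1/2,2}$ and odd reflection fails to be bounded: the relevant cross term in the Slobodeckij seminorm is controlled only under a Hardy-type Lions--Magenes $H^{1/2}_{00}$ condition, strictly stronger than a vanishing trace. One must also rule out a spurious exceptional value at $2\alpha=\tfrac32$: there the only potential obstruction comes from the first-order trace, via the even extension of $\nabla x$, but even reflection is bounded on $H^{1/2}(\mathbb T)$ — a short computation shows the cross term $\int_0^1\!\int_0^1\frac{|\nabla x(y)-\nabla x(z)|^2}{(y+z)^{2}}\,dy\,dz$ is dominated by the $H^{1/2}$ Slobodeckij seminorm of $\nabla x$ — so no condition on $\nabla x(0),\nabla x(1)$ is needed. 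This reflects the fact that the domain of the Dirichlet Laplacian imposes only zeroth-order (Dirichlet) data, so the single condition $x(0)=x(1)=0$ suffices for every $\alpha\in(\tfrac14,1)$. As an alternative to the explicit extension estimates, the same conclusion can be reached by identifying $D((-A)^\alpha)$ with the complex interpolation space $[L^2,W^{2,2}\cap W^{1,2}_0]_\alpha$ (valid with equivalent norms since $-A$ is positive and self-adjoint) and invoking the known interpolation identities for these spaces, the borderline bookkeeping at $\alpha=\tfrac14$ being inherited from the corresponding trace theorem.
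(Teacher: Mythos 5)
Your proposal is correct, but it follows a genuinely different route from the paper. The paper's proof is essentially citation-based: it invokes Yagi's Theorem~16.12 to identify $D((-A)^{\alpha})$ with the Bessel-potential space $H^{2\alpha}_{2}$ for $\alpha\in(0,\tfrac14)$ and with its Dirichlet subspace $H^{2\alpha}_{2,D}$ for $\alpha\in(\tfrac14,1)$, and then chains identifications from Triebel ($H^{s}_{2}=B^{s}_{2,2}=W^{s,2}$ for non-integer $s$) to land in the Slobodeckij scale; every step is outsourced to general machinery (interpolation, bounded imaginary powers, trace theorems) that works in any dimension and for much more general operators. You instead exploit the explicit sine eigenbasis: Parseval reduces membership in $D((-A)^{\alpha})$ to membership of the odd $2$-periodic extension in $H^{2\alpha}(\mathbb{T})$, and the proposition becomes the classical question of when odd reflection is bounded on the fractional Sobolev scale. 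What this buys is transparency: the exclusion of $\alpha=\tfrac14$ is seen concretely as the failure of the fractional Hardy inequality at $H^{1/2}$ (the Lions--Magenes $H^{1/2}_{00}$ phenomenon), and your explicit check that nothing goes wrong at $2\alpha=\tfrac32$ --- the derivative of the odd extension is the even extension of $\nabla x$, whose cross term is controlled by the pointwise bound $(y+z)^{-2}\leq|y-z|^{-2}$ --- correctly confirms that only the zeroth-order Dirichlet condition ever enters, so $\alpha=\tfrac34$ is not exceptional. The price is that a fully written version must still prove or cite the reflection/Hardy estimates you invoke (boundedness of odd extension on $H^{s}$ for $s<\tfrac12$, and on trace-zero functions for $s\in(\tfrac12,1)$), as well as the equivalence of the Fourier and Slobodeckij norms on the torus; these are standard but not free. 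Your closing alternative --- identifying $D((-A)^{\alpha})$ with the complex interpolation space $[L^2,W^{2,2}\cap W^{1,2}_0]_{\alpha}$ and quoting known interpolation identities --- is essentially the paper's route in disguise, since Yagi's theorem rests on exactly that identification.
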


\begin{proof}
It follows from~\cite[Theorem 16.12]{Yagi:2010} 
that 
\begin{equation}
D((-A_2)^{\alpha}) = 
\begin{cases}
    H^{2\alpha}; & \alpha \in (0,\tfrac{1}{4}),\\
    H^{2\alpha}_{p,D}; & \alpha \in (\tfrac{1}{4},1).
\end{cases}
\end{equation}  
with equivalence in norms, where $H^{2\alpha}_2$ is the Lebesgue-Sobolev space defined in~\cite[Sections 1.11.3-4]{Yagi:2010}, and 
$H^{2\alpha}_{2,D} = \{ f\in H^{2\alpha}_{2} \colon f(0)=f(1)=0\}$, see~\cite[p.\ 560]{Yagi:2010}.
Next, note that the definition of the spaces $H^{s}_p$ ($s\in [0,\infty)$, $p\in [1,\infty)$) in~\cite[Section 1.11.3-4]{Yagi:2010} coincides with the definition of these spaces in~\cite{Triebel:1978}, see~\cite[Theorem 2.3.3(a) and Definition 4.2.1]{Triebel:1978}. Finally,~\cite[Theorem 4.6.1]{Triebel:1978} ensures $H^{s}_2=B^{s}_{2,2}$ for $s\in (0,\infty)\setminus \N$, and from~\cite[4.4.1/Remark 2]{Triebel:1978} we obtain that $B_{p,p}^s = W^{s,p}$ for $s\in (0,\infty)\setminus \N$ (note also that $H^{k}_2=W^{k,2}$ for $k\in \N$, see~\cite[p.41]{Yagi:2010}).
\end{proof}

In view of Proposition~\ref{prop:DA_Sobolev_equiv} above, owing to the Sobolev inequalities~\eqref{eq:SobolevII}, and to the inequality~\eqref{eq:A_fracpownorminc}, for all $\delta\in(0,\infty)$, one has $D((-A)^{\frac{1}{4}+\delta})\subseteq L^{\infty}$ and
there exists $C_\delta\in(0,\infty)$ such that
\begin{equation}\label{eq:Linfty_DA_bound}
    \|x\|_{L^\infty}\le C_\delta \|(-A)^{\frac14+\delta}x\|_{L^2}, \quad \forall~x\in D((-A)^{\frac14+\delta}).
\end{equation}
By a duality argument, one obtains the inequality
\begin{equation}\label{eq:Linfty_DA_bound-dual}
    \|(-A)^{-(\frac14+\delta)}y\|_{L^2}\le C_{\delta}\|y\|_{L^1}, \quad \forall y\in L^1. 
\end{equation}

The following result summarizes some well-known properties of the fractional powers of $A$, and of the semigroup $\bigl(e^{tA}\bigr)_{t\ge 0}$, see e.g.~\cite[Chapter 2.6]{Pazy:1983}. In our setting the proof is elementary.

\begin{lemma}\label{lem:A_analytic}
For all $\alpha \in (0,\infty) $, $\beta \in [0,1]$, $t\in (0,\infty)$, and $x\in L^2$ one has
\begin{enumerate}
    \item $\| (-A)^{\alpha}  e^{t A} x \|_{L^2} \leq e^{\alpha(\log(\alpha)-1)} t^{-\alpha} \| x \|_{L^2}$,
    \item $\| (-A)^{-\beta}(e^{tA}- I) x \|_{L^2} \leq t^{\beta} \| x \|_{L^2}$.
\end{enumerate}
\end{lemma}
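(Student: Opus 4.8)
The plan is to prove Lemma~\ref{lem:A_analytic} directly from the spectral representation of $A$, exploiting that $-A$ is diagonalized by the orthonormal basis $(h_k)_{k\in\N}$ with eigenvalues $\lambda_k = (\pi k)^2 > 0$. For any $x\in L^2$ we write $x = \sum_{k\in\N}\langle x,h_k\rangle_{L^2}h_k$, and by the functional calculus for self-adjoint operators we have $e^{tA}x = \sum_{k\in\N} e^{-\lambda_k t}\langle x,h_k\rangle_{L^2}h_k$ and $(-A)^{\alpha}e^{tA}x = \sum_{k\in\N}\lambda_k^{\alpha}e^{-\lambda_k t}\langle x,h_k\rangle_{L^2}h_k$. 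Since the $(h_k)$ are orthonormal, Parseval's identity reduces each estimate to a scalar supremum over the eigenvalues, so no genuinely operator-theoretic machinery is needed.

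For part (1), by Parseval's identity one has
\[
\| (-A)^{\alpha} e^{tA} x\|_{L^2}^2 = \sum_{k\in\N} \lambda_k^{2\alpha} e^{-2\lambda_k t}\,\langle x,h_k\rangle_{L^2}^2 \le \Bigl(\sup_{\mu>0}\mu^{\alpha}e^{-\mu t}\Bigr)^2 \sum_{k\in\N}\langle x,h_k\rangle_{L^2}^2 = \Bigl(\sup_{\mu>0}\mu^{\alpha}e^{-\mu t}\Bigr)^2\|x\|_{L^2}^2.
\]
Thus it suffices to bound the scalar quantity $\sup_{\mu>0}\mu^{\alpha}e^{-\mu t}$. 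Setting $g(\mu)=\mu^{\alpha}e^{-\mu t}$ and solving $g'(\mu)=0$ gives the maximizer $\mu^{*}=\alpha/t$, and substituting back yields $g(\mu^{*}) = (\alpha/t)^{\alpha}e^{-\alpha} = \alpha^{\alpha}e^{-\alpha}\,t^{-\alpha} = e^{\alpha(\log\alpha - 1)}\,t^{-\alpha}$, which is exactly the stated constant. Taking square roots in the displayed inequality completes part (1).

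For part (2), the same diagonalization gives $(-A)^{-\beta}(e^{tA}-I)x = \sum_{k\in\N}\lambda_k^{-\beta}(e^{-\lambda_k t}-1)\langle x,h_k\rangle_{L^2}h_k$, so by Parseval it again suffices to bound the scalar multiplier $\sup_{\mu>0}\mu^{-\beta}|e^{-\mu t}-1| = \sup_{\mu>0}\mu^{-\beta}(1-e^{-\mu t})$ by $t^{\beta}$. Writing $s=\mu t$, this equals $t^{\beta}\sup_{s>0}s^{-\beta}(1-e^{-s})$, so the claim reduces to showing $\sup_{s>0}s^{-\beta}(1-e^{-s})\le 1$ for $\beta\in[0,1]$. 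This follows from the elementary inequality $1-e^{-s}\le \min(1,s)\le s^{\beta}$ valid for all $s>0$ and $\beta\in[0,1]$ (since $1-e^{-s}\le s$ from convexity, and $1-e^{-s}\le 1$ trivially, and $\min(1,s)\le s^{\beta}$ by interpolating the two bounds). Combining gives the factor $t^{\beta}$ and finishes part (2).

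I do not expect any serious obstacle here: the result is standard analytic-semigroup theory, and the only real content is the one-variable calculus optimization in part (1) (to recover the precise constant $e^{\alpha(\log\alpha-1)}$) and the elementary bound $1-e^{-s}\le s^{\beta}$ in part (2). The mild subtlety worth stating carefully is the domain issue: for $x\in L^2$ and $t>0$ the vector $e^{tA}x$ automatically lies in $D((-A)^{\alpha})$ for every $\alpha\ge 0$ because the smoothing factor $e^{-\lambda_k t}$ decays faster than any power $\lambda_k^{\alpha}$ grows, so both left-hand sides are well-defined finite quantities; this is precisely what makes the supremum over $\mu>0$ finite and justifies interchanging the norm with the spectral sum. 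With that remark in place the estimates are immediate from Parseval's identity.
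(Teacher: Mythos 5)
Your proof is correct and takes essentially the same approach as the paper: diagonalize $A$ in the eigenbasis $(h_k)_{k\in\N}$ and reduce, via Parseval's identity, to the two scalar bounds $\sup_{\mu>0}\mu^{\alpha}e^{-\mu t}\leq e^{\alpha(\log(\alpha)-1)}t^{-\alpha}$ and $\sup_{\mu>0}\mu^{-\beta}(1-e^{-\mu t})\leq t^{\beta}$, which the paper cites as elementary and you verify explicitly (the calculus optimization at $\mu=\alpha/t$ and the bound $1-e^{-s}\leq\min(1,s)\leq s^{\beta}$). Your remark on the domain issue, namely that $e^{tA}x\in D((-A)^{\alpha})$ for $t>0$, is a welcome additional precision.
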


\begin{proof}
Those inequalities are immediate consequences of the fact that $A$ is diagonizable with $\sigma(A)\subseteq (-\infty,0)$ ( one has $Ah_k=-(\pi k)^2 h_k$ for all $k\in\N$, see~\eqref{eq:defLaplace}),  and of the elementary inequalities 
$\underset{\lambda \in (0,\infty)}\sup~\lambda^{\alpha} e^{-\lambda t} \leq e^{\alpha(\log(\alpha)-1)} t^{-\alpha}$ and 
$\underset{\lambda \in (0,\infty)}\sup~\lambda^{-\beta} (1-e^{-\lambda t}) \leq t^{\beta} $.
\end{proof}

Let us now provide a useful product inequality.

\begin{lemma} \label{lem:fracpow_B_est}
For all $\gamma\in(0,\frac18)$, there exists $C_\gamma\in(0,\infty)$ such that for all $x_1\in D((-A)^{\frac14+4\gamma})$ and all $x_2\in D((-A)^{4\gamma})$ one has
\[
\|(-A)^\gamma (x_1x_2)\|_{L^2}\le C_{\gamma} \|(-A)^{1/4+4\gamma}x_1\|_{L^{2}} \|(-A)^{4\gamma}x_2\|_{L^{2}}.
\]
\end{lemma}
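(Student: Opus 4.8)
The plan is to translate the fractional-power norms into Slobodeckij--Sobolev norms via Proposition~\ref{prop:DA_Sobolev_equiv} and then estimate the product directly at the level of the Slobodeckij seminorm. Since $\gamma\in(0,\tfrac18)$ we have $2\gamma\in(0,\tfrac14)$, so Proposition~\ref{prop:DA_Sobolev_equiv} gives $\|(-A)^\gamma(x_1x_2)\|_{L^2}\le C\|x_1x_2\|_{W^{2\gamma,2}}$, and it suffices to bound the right-hand side. Before doing so I would record the key regularity gain for $x_1$: the extra $\tfrac14$ in its exponent is exactly what places $x_1$ in a H\"older space. Indeed, by Proposition~\ref{prop:DA_Sobolev_equiv} and \eqref{eq:A_fracpownorminc} one has $x_1\in D((-A)^{1/4+4\gamma})=W^{1/2+8\gamma,2}_0$ with $\|x_1\|_{W^{1/2+8\gamma,2}}\le C\|(-A)^{1/4+4\gamma}x_1\|_{L^2}$; then for any $\theta\in(2\gamma,\min(8\gamma,\tfrac12))$ the Sobolev embedding \eqref{eq:SobolevII} with $p=2$, together with Proposition~\ref{prop:DA_Sobolev_equiv} and \eqref{eq:A_fracpownorminc}, yields $\|x_1\|_{C^\theta}\le C\|(-A)^{1/4+4\gamma}x_1\|_{L^2}$. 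In particular $\|x_1\|_{L^\infty}\le C\|(-A)^{1/4+4\gamma}x_1\|_{L^2}$, and combined with $\|x_2\|_{L^2}\le\|(-A)^{4\gamma}x_2\|_{L^2}$ (again \eqref{eq:A_fracpownorminc}) this disposes of the $L^2$ part via $\|x_1x_2\|_{L^2}\le\|x_1\|_{L^\infty}\|x_2\|_{L^2}$.

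For the Slobodeckij seminorm I would write $x_1(y)x_2(y)-x_1(z)x_2(z)=x_1(y)\bigl(x_2(y)-x_2(z)\bigr)+x_2(z)\bigl(x_1(y)-x_1(z)\bigr)$ and split the seminorm (an $L^2(dy\,dz)$ norm of the difference quotient) into two contributions by the triangle inequality. In the first contribution I pull out $\|x_1\|_{L^\infty}$, leaving exactly $[x_2]_{W^{2\gamma,2}}\le\|x_2\|_{W^{2\gamma,2}}\le C\|(-A)^\gamma x_2\|_{L^2}\le C\|(-A)^{4\gamma}x_2\|_{L^2}$ by Proposition~\ref{prop:DA_Sobolev_equiv} and \eqref{eq:A_fracpownorminc}; together with the $L^\infty$ bound on $x_1$ this gives the desired product estimate for the first term.

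The second contribution, namely $\int_0^1\int_0^1|y-z|^{-(1+4\gamma)}|x_2(z)|^2|x_1(y)-x_1(z)|^2\,dy\,dz$, is the main obstacle, since $x_2$ is \emph{not} controlled in $L^\infty$ (it has smoothness only $8\gamma<1$). Here I would exploit the H\"older regularity of $x_1$: the pointwise bound $|x_1(y)-x_1(z)|\le\|x_1\|_{C^\theta}|y-z|^\theta$ turns the inner $y$-integral into $\int_0^1|y-z|^{2\theta-1-4\gamma}\,dy$, which is finite and bounded uniformly in $z\in[0,1]$ precisely because $\theta>2\gamma$ (so the exponent exceeds $-1$). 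This leaves $\int_0^1|x_2(z)|^2\,dz=\|x_2\|_{L^2}^2$, so the second contribution is at most $C\|x_1\|_{C^\theta}^2\|x_2\|_{L^2}^2$, hence bounded by $C\|(-A)^{1/4+4\gamma}x_1\|_{L^2}^2\|(-A)^{4\gamma}x_2\|_{L^2}^2$. Collecting the three bounds proves the lemma. The only delicate point is the choice of $\theta$, which must satisfy $\theta>2\gamma$ (for convergence of the singular integral) and $\theta<\min(8\gamma,\tfrac12)$ (so that \eqref{eq:SobolevII} applies with $p=2$); such a $\theta$ exists since $\gamma<\tfrac18$. The hypothesis $\gamma<\tfrac18$ is also what guarantees $2\gamma<\tfrac14$, so that the left-hand side is identified with the unconstrained space $W^{2\gamma,2}$ in the first step, and it is the extra $\tfrac14$ of smoothness on $x_1$ that pushes its Sobolev exponent past $\tfrac12$ and thereby makes the H\"older estimate available at all.
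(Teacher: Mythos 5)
Your proof is correct, but it takes a genuinely different route from the paper's. The paper's argument is short because it outsources the main difficulty: it invokes the product estimate of \cite[Inequality (12)]{MR3862147}, namely $\|(-A)^\gamma(x_1x_2)\|_{L^2}\le C_{\gamma,p,q}\|(-A)^{2\gamma}x_1\|_{L^{2p}}\|(-A)^{2\gamma}x_2\|_{L^{2q}}$ for conjugate exponents $p,q$, then chooses $q=\frac{1}{1-8\gamma}$ (so that $\frac14-\frac{1}{4q}=2\gamma$) and converts both factors into the stated fractional-power norms via \eqref{eq:Linfty_DA_bound}, the Sobolev inequality \eqref{eq:SobolevI} and Proposition~\ref{prop:DA_Sobolev_equiv}. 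You instead argue from first principles: you identify $\|(-A)^\gamma(x_1x_2)\|_{L^2}$ with the $W^{2\gamma,2}$-norm of the product (legitimate since $\gamma<\frac14$, so Proposition~\ref{prop:DA_Sobolev_equiv} applies and the graph norm of $(-A)^\gamma$ is equivalent to $\|(-A)^\gamma\cdot\|_{L^2}$), split the Slobodeckij difference quotient bilinearly, and bound the two pieces by $\|x_1\|_{L^\infty}\,[x_2]_{W^{2\gamma,2}}$ and by $\|x_1\|_{C^\theta}\|x_2\|_{L^2}$ respectively, the singular integral $\int_0^1|y-z|^{2\theta-1-4\gamma}\,dy$ converging uniformly in $z$ precisely because $\theta>2\gamma$. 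Your parameter bookkeeping checks out: the window $(2\gamma,\min(8\gamma,\tfrac12))$ is nonempty for $\gamma\in(0,\tfrac18)$, the embedding \eqref{eq:SobolevII} with $p=2$ needs $\theta<\tfrac12$, and $\theta<8\gamma$ guarantees $\|x_1\|_{C^\theta}\le C\|(-A)^{\frac14+4\gamma}x_1\|_{L^2}$ via Proposition~\ref{prop:DA_Sobolev_equiv} and \eqref{eq:A_fracpownorminc}. What each approach buys: the paper's proof is two lines modulo the cited inequality and stays within a framework that generalizes to other nonlinearities; yours is fully self-contained and, as a bonus, yields a slightly stronger conclusion --- you only ever use $\|x_2\|_{L^2}$ and $[x_2]_{W^{2\gamma,2}}\le C\|(-A)^{\gamma}x_2\|_{L^2}$, so the factor $\|(-A)^{4\gamma}x_2\|_{L^2}$ on the right-hand side could be replaced by the smaller quantity $\|(-A)^{\gamma}x_2\|_{L^2}$; the exponent $4\gamma$ in the statement reflects the paper's choice of H\"older exponents, not anything your argument requires.
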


Note that below Lemma~\ref{lem:fracpow_B_est} is used for arbitrarily small $\gamma$.

\begin{proof}
    Let $p,q\in(1,\infty)$ be such that  $1/p+1/q=1$. 
   Let $x_1\in  D((-A)^{\frac14+4\gamma})$ and all $x_2\in D((-A)^{4\gamma})$.  Owing to~\cite[Inequality~(12)]{MR3862147}, one obtains 
    \[
    \|(-A)^\gamma (x_1x_2)\|_{L^2} \le C_{\gamma,p,q} \|(-A)^{2\gamma}x_1\|_{L^{2p}} \|(-A)^{2\gamma}x_2\|_{L^{2q}}.
    \]
  for some $C_{\gamma,p,q}\in (0,\infty)$ which does not depend on $x_1, x_2$. Let us choose $q=\frac{1}{1-8\gamma}\in(1,\infty)$ and observe that $\frac14-\frac{1}{4q}=2\gamma$. Therefore applying the inequality~\eqref{eq:Linfty_DA_bound}, the Sobolev inequality~\eqref{eq:SobolevI}  and the equivalence of norms property~\eqref{eq:equinorm} one obtains the upper bounds
    \begin{align*}
    \|(-A)^{2\gamma}x_1\|_{L^{2p}}&\le \|(-A)^{2\gamma}x_1\|_{L^{\infty}}\le C_{\gamma} \|(-A)^{\frac{1}{4}+4\gamma}x_1\|_{L^{2}}, \\
    \|(-A)^{2\gamma}x_2\|_{L^{2q}}&\le C_q\|(-A)^{\frac{1}{4}-\frac{1}{4q}+2\gamma}x_2\|_{L^{2}} = C_q\|(-A)^{4\gamma}x_2\|_{L^{2}}.
    \end{align*}
    Gathering the upper bounds concludes the proof. 
\end{proof}

Next let us state some properties related to the behavior of the gradient operator $\nabla$ and of the fractional powers of $-A$, obtained by interpolation.

\begin{lemma}\label{lem:deriv_bdd_fracdomains}
For all $\alpha\in[0,\frac12]$ and all $x\in W^{1,2}$ one has
\begin{equation}
\| (-A)^{-\alpha} \nabla (-A)^{-\frac{1}{2}+\alpha} x \|_{L^2} \leq \| x \|_{L^2}.
\end{equation}
\end{lemma}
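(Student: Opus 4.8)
The plan is to reduce the estimate to a conjugation of a fixed contraction by complex powers of $-A$, and then to interpolate between the two endpoints $\alpha=0$ and $\alpha=\tfrac12$. First I would record how $\nabla$ acts on the eigenbasis. Writing $g_k=\sqrt2\cos(k\pi\cdot)$ for $k\in\N$, one has $\nabla h_k = \pi k\, g_k$, and $(g_k)_{k\in\N}$ is an orthonormal system in $L^2$ whose closed span is the orthogonal complement of the constants. Hence the operator $U\in\mathcal{L}(L^2)$ determined by $Uh_k=g_k$ is an isometry into $L^2$, so $\|U\|_{\mathcal{L}(L^2)}=1$, and on the dense subspace $\operatorname{span}\{h_k:k\in\N\}$ one has the factorisation $\nabla = U(-A)^{1/2}$. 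Consequently, using $(-A)^{-1/2+\alpha}=(-A)^{-1/2}(-A)^{\alpha}$,
\[
(-A)^{-\alpha}\nabla(-A)^{-\frac12+\alpha} = (-A)^{-\alpha}\,U\,(-A)^{\alpha}
\]
on $\operatorname{span}\{h_k\}$. It then suffices to show that this operator extends to a contraction on $L^2$ (the displayed bound for $x\in W^{1,2}$ follows, the left-hand side being read as this bounded extension applied to $x$).

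Next I would treat the two endpoints. For $\alpha=0$ the operator is $U$, a contraction by the above. For $\alpha=\tfrac12$ the operator is $(-A)^{-1/2}\nabla$; integrating by parts and using $h_k(0)=h_k(1)=0$ gives $\langle\nabla x,h_k\rangle_{L^2}=-\pi k\,\langle x,g_k\rangle_{L^2}$, whence $(-A)^{-1/2}\nabla x = -\sum_{k\in\N}\langle x,g_k\rangle_{L^2}h_k$, and Bessel's inequality for the orthonormal system $(g_k)_{k\in\N}$ yields $\|(-A)^{-1/2}\nabla\|_{\mathcal{L}(L^2)}\le1$. Both endpoints are therefore contractions.

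Finally I would interpolate. Consider the operator-valued function $F(z)=(-A)^{-z}U(-A)^{z}$ on the strip $\{z:0\le\Re z\le\tfrac12\}$. Since the imaginary powers $(-A)^{iy}$ are unitary on $L^2$ with $\|(-A)^{iy}\|_{\mathcal{L}(L^2)}=1$, factoring $(-A)^{-iy}$ and $(-A)^{iy}$ off on the two boundary lines gives $\|F(iy)\|_{\mathcal{L}(L^2)}=\|U\|_{\mathcal{L}(L^2)}\le1$ and $\|F(\tfrac12+iy)\|_{\mathcal{L}(L^2)}=\|(-A)^{-1/2}U(-A)^{1/2}\|_{\mathcal{L}(L^2)}=\|(-A)^{-1/2}\nabla\|_{\mathcal{L}(L^2)}\le1$. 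Applying the Hadamard three-lines theorem (equivalently, Stein interpolation) to $z\mapsto\langle F(z)x,y\rangle$ for $x,y\in\operatorname{span}\{h_k\}$ then yields $\|F(\alpha)\|_{\mathcal{L}(L^2)}\le1$ for all $\alpha\in[0,\tfrac12]$, which is the claim.

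The main obstacle is the rigorous justification of this interpolation. One must check that, for $x,y$ finite linear combinations of the $h_k$, the scalar function $z\mapsto\langle F(z)x,y\rangle=\sum_{k,l}(\pi k)^{2z}(\pi l)^{-2z}\langle g_k,h_l\rangle_{L^2}\,\langle x,h_k\rangle_{L^2}\overline{\langle y,h_l\rangle_{L^2}}$ is analytic and bounded on the closed strip, so that the three-lines theorem applies; this is exactly where finiteness of the sums is used. A secondary subtlety is that $U$ does not commute with $-A$, so the reduction to the two endpoints works only because on each boundary line the unimodular factors $(-A)^{\pm iy}$ can be split off as unitaries without disturbing $U$; and one must keep track of the domain issue, interpreting the composed operator as its bounded extension rather than as a literal composition (which is not defined on all of $W^{1,2}$ when $\alpha\in(\tfrac14,\tfrac12)$).
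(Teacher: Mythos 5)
Your proof is correct, and it shares its skeleton with the paper's: both establish the same two endpoint bounds (at $\alpha=0$ and $\alpha=\tfrac12$) and then complex-interpolate, the crucial structural input being boundedness of the imaginary powers of $-A$. The execution, however, is genuinely different. The paper obtains the $\alpha=\tfrac12$ endpoint by duality ($\nabla^*=-\nabla$ on $W^{1,2}_0$) and then cites abstract machinery: the identification of $[L^2,D((-A)^{1/2})]_{2\alpha}$ with fractional domains via bounded imaginary powers (Lunardi), plus Calder\'on's duality theorem $[L^2,(D((-A)^{\frac12}))^*]_{2\alpha}=([L^2,D((-A)^{\frac12})]_{2\alpha})^*$ to handle the dual scale. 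You instead run the interpolation by hand: the factorization $\nabla=U(-A)^{1/2}$ through the explicit cosine isometry reduces the claim to the conjugated family $F(z)=(-A)^{-z}U(-A)^{z}$, the $\alpha=\tfrac12$ endpoint becomes Bessel's inequality, and the three-lines theorem applied to the finitely many matrix elements $\langle F(z)x,y\rangle$ with $x,y\in\operatorname{span}\{h_k\}$ closes the argument, since $(-A)^{\pm iy}$ is unitary by self-adjointness. The paper's route is shorter and works for any operator with bounded imaginary powers (no self-adjointness or explicit eigenbasis needed); your route is elementary and self-contained, makes the constant $1$ transparent, and avoids the dual-interpolation-space identification altogether, at the price of exploiting the explicit spectral decomposition of the Dirichlet Laplacian. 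The two caveats you flag are exactly the right ones: analyticity and boundedness of the matrix elements on the closed strip (guaranteed by finiteness of the sums), and reading the operator as a bounded extension for $\alpha\in(\tfrac14,\tfrac12)$, where the literal composition indeed fails on $W^{1,2}$ when boundary values are nonzero; the paper's statement is subject to the same interpretational caveat through its dual-space definition of $(-A)^{-\alpha}$.
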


\begin{proof}
For all $x\in D((-A)^{\frac12})$, one has $\|\nabla x\|_{L^2}=\|(-A)^{\frac12}x\|_{L^2}$. Therefore one has the property $\nabla\in \mathcal{L}\bigl(D((-A)^{\frac12}),L^2\bigr)$. In addition, by duality one has $\nabla\in \mathcal{L}\bigl(L^2,(D((-A)^{\frac12})^*)\bigr)$. The result then follows from complex interpolation theory and the fact that $A$ has bounded imaginary powers (see, e.g., \cite[Theorems~2.6 and~4.17]{Lunardi:2018}; it is also used that $[L^2,(D((-A)^{\frac12})^*]_{\alpha} = ([L^2,D((-A)^{\frac12})]_{\alpha})^*$, see~\cite{Calderon:1964}). 
\end{proof}

We will also need a similar result for the realization of the operator $A$ in $L^p$: let $A_p\colon D(A_p)\subseteq L^p \rightarrow L^p$ denote the realization of $A$ in $L^p$; by~\cite[Theorem 2.19]{Yagi:2010} this is again the generator of an analytic $C_0$-semigroup. In particular, following~\cite[Section 2.7.7]{Yagi:2010} we can define fractional powers $(-A_p)^{\alpha} \colon D((-A_p)^{\alpha}) \rightarrow L^p$  for all $\alpha \in (0,\infty)$ (see also~\cite[Chapter 2]{Pazy:1983}). We shall need the following:

\begin{lemma}\label{lem:deriv_bdd_sqrtA}
Let $p\in (1,\infty)$. There exists $C_p\in (0,\infty)$ such that
\begin{equation}\label{eq:deriv_bdd_sqrtA}
    \| (-A_p)^{-\frac{1}{2}}\nabla x \|_{L^p} \leq  C_p \| x \|_{L^p}, \quad \forall x \in W^{1,p}.
\end{equation}
\end{lemma}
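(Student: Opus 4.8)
The plan is to prove the $L^p$-boundedness of $(-A_p)^{-1/2}\nabla$ by reducing it to a statement about a singular integral operator and then invoking a known multiplier/transference theorem, exactly in parallel with the $L^2$ argument of Lemma~\ref{lem:deriv_bdd_fracdomains} but now without the luxury of Parseval's identity. The core observation is that on $L^2$ the operator $(-A)^{-1/2}\nabla$ is, up to signs, essentially a version of the Hilbert/Riesz transform associated with the Dirichlet Laplacian: writing $\nabla h_k = k\pi \sqrt{2}\cos(k\pi\cdot)$ and $(-A)^{-1/2}h_k = (k\pi)^{-1}h_k$, one sees that $(-A)^{-1/2}\nabla$ maps the sine basis to (a multiple of) the cosine system, which is precisely the kind of conjugate-function operator that is bounded on $L^p$ for all $p\in(1,\infty)$.

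\emph{Step 1 (reduce to a concrete operator).} First I would record that $(-A_p)^{-1/2}$ is the negative fractional power of the Dirichlet Laplacian realized on $L^p$, well-defined by the analyticity of the semigroup cited before the statement, and that $(-A_p)^{-1/2}\nabla$ is a priori defined on the dense subspace $W^{1,p}$. It then suffices to establish the claimed bound on this dense subspace, e.g.\ on finite linear combinations of the $h_k$, and extend by density.

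\emph{Step 2 (identify the operator as a Fourier/multiplier operator and apply boundedness).} The main step is to show $(-A_p)^{-1/2}\nabla \in \mathcal{L}(L^p)$. I would do this by extending to the periodic or whole-line setting via reflection: the Dirichlet Laplacian on $(0,1)$ with its sine eigenbasis is obtained from the Laplacian on the torus (or on $\R$) by an odd reflection, under which $(-A)^{-1/2}\nabla$ becomes the classical conjugation operator / Riesz transform $\partial_z(-\Delta)^{-1/2}$. That operator is a Calder\'on--Zygmund (Mikhlin--H\"ormander) multiplier, hence bounded on $L^p(\mathbb{T})$ (resp.\ $L^p(\R)$) for every $p\in(1,\infty)$; pulling the bound back through the reflection, which is an isometry of $L^p$, yields~\eqref{eq:deriv_bdd_sqrtA} with a constant $C_p$ depending only on $p$. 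An alternative, cleaner route is to invoke directly that $-A_p$ admits a bounded $H^\infty$-functional calculus on $L^p$ (as follows from the analyticity and the known results for the Dirichlet Laplacian, e.g.\ via~\cite{Yagi:2010}), which immediately gives boundedness of the imaginary powers $(-A_p)^{it}$ with polynomial growth in $t$, and then one combines this with the fact that $\nabla(-A_p)^{-1/2}$ is bounded on $L^p$ via the square-function / Riesz-transform estimates available for generators of analytic semigroups with $H^\infty$ calculus.

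I expect the genuine obstacle to be \emph{Step 2}: on $L^2$ the identity $\|\nabla x\|_{L^2}=\|(-A)^{1/2}x\|_{L^2}$ is exact and makes $(-A)^{-1/2}\nabla$ a partial isometry by inspection, but on $L^p$ there is no such algebraic identity, and one genuinely needs harmonic-analytic input (singular-integral/multiplier theory, or equivalently the boundedness of the Riesz transform for the Dirichlet Laplacian on the interval). The boundary conditions require a little care: the odd reflection is what guarantees the reflected function stays in the right function space and that the multiplier symbol is the clean Riesz symbol rather than something with boundary corrections. Once the transference to a classical Calder\'on--Zygmund operator is justified, the $L^p$ bound for all $p\in(1,\infty)$ is standard, and the dependence of $C_p$ solely on $p$ follows from the Mikhlin--H\"ormander constants.
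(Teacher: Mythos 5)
There is a genuine gap, and it sits exactly at the step you yourself flagged as the main one: throughout the proposal you conflate $(-A_p)^{-1/2}\nabla$ (the operator in the lemma, derivative \emph{inside}) with the Riesz transform $\nabla(-A_p)^{-1/2}$ (derivative \emph{outside}). Already your ``core observation'' computes the wrong operator: it is $\nabla(-A)^{-1/2}$ that sends $h_k$ to $\sqrt{2}\cos(k\pi\cdot)$; the operator $(-A)^{-1/2}\nabla$ sends $h_k$ to the sine expansion of $k\pi\sqrt{2}\cos(k\pi\cdot)$ with each sine coefficient damped by the corresponding inverse frequency, a very different object. The same confusion breaks the transference in Step 2. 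Odd reflection intertwines the Dirichlet calculus with the torus calculus only on \emph{odd} functions, and $\nabla$ flips parity: writing $Ox$ for the odd extension of $x$ to the torus $\R/2\Z$, one has $\bigl(\partial_z(-\Delta_{\mathbb{T}})^{-1/2}(Ox)\bigr)|_{(0,1)}=\nabla(-A)^{-1/2}x$, but there is \emph{no} Fourier multiplier $m$ with $\bigl(m(D)(Ox)\bigr)|_{(0,1)}=(-A)^{-1/2}\nabla x$. Indeed, take $x=h_1$: any multiplier applied to $Oh_1=\sqrt{2}\sin(\pi\cdot)$ has frequency support $\{\pm\pi\}$, whereas $(-A)^{-1/2}\nabla h_1=(-A)^{-1/2}\bigl(\sqrt{2}\pi\cos(\pi\cdot)\bigr)$ is a sine series supported on the even frequencies $2\pi,4\pi,\dots$ (because $\cos(\pi\cdot)$ is antisymmetric about $z=\tfrac12$). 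So the proof as written fails at its central identification, precisely because of the boundary/parity issue you anticipated but resolved incorrectly.

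The argument is repairable in two ways, and the comparison with the paper is instructive. (i) Reflect \emph{evenly}: for $x\in W^{1,p}$ the even extension $Ex$ lies in $W^{1,p}$ of the torus with $\partial_z(Ex)$ equal to the odd extension of $\nabla x$ (no boundary corrections arise here), hence $(-A)^{-1/2}\nabla x=\bigl((-\Delta_{\mathbb{T}})^{-1/2}\partial_z(Ex)\bigr)|_{(0,1)}$, and $(-\Delta_{\mathbb{T}})^{-1/2}\partial_z$ is the conjugate-function multiplier $i\,\mathrm{sgn}$, bounded on $L^p(\mathbb{T})$ for $1<p<\infty$; this would give a genuinely more classical, self-contained proof than the paper's. (ii) Keep the odd reflection, which yields boundedness of $\nabla(-A_q)^{-1/2}$ on $L^q$ with $\tfrac1p+\tfrac1q=1$, and then dualize: for $y\in L^q$ one has $(-A_q)^{-1/2}y\in D((-A_q)^{1/2})=W^{1,q}_0$, so integration by parts produces no boundary terms and $\langle(-A_p)^{-1/2}\nabla x,y\rangle_{L^2}=-\langle x,\nabla(-A_q)^{-1/2}y\rangle_{L^2}$, giving the bound. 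Route (ii) is essentially the paper's proof, which invokes the bounded $H^\infty$-calculus of $A_p$ only to identify $D((-A_p)^{1/2})=W^{1,p}_0$ and then concludes via $\nabla^*=-\nabla$. Your alternative ``$H^\infty$-calculus'' route stops one step short in the same way: boundedness of $\nabla(-A_p)^{-1/2}$ must still be converted into boundedness of $(-A_p)^{-1/2}\nabla$ by such a duality argument, and moreover Riesz-transform bounds do \emph{not} follow from a bounded $H^\infty$-calculus for general analytic generators (that is Kato square-root territory); here they follow only from the concrete domain identification above.
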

\begin{proof}
Note that $A_p$ has a bounded $H^{\infty}$-calculus by e.g.~\cite[Theorem 1.1]{LindemulderVeraar:2020} (take $\gamma=0$) or~\cite[Section 8.2 and Theorem 10.15]{KunstmannWeis:2004}.
It thus follows from~\cite[Theorem 16.15 and Section 1.11.2]{Yagi:2010} 
that $D((-A_p)^{\frac{1}{2}}) = W^{1,p}_0 $, i.e., $(-A_p)^{-\frac{1}{2}}\in L(L^2,W^{1,p}_0)$. Let $q\in (1,\infty)$ be such that $\frac{1}{p}+\frac{1}{q}=1$. By definition, one has $\nabla \in \mathcal{L}(W_0^{1,q},L^{q})$, i.e., $\nabla^* \in \mathcal{L}(L^{p},(W_0^{1,q})^*)$. As $\nabla^* x = - \nabla x$ this completes the proof.
\end{proof}

As for the operator $B$, recall that for $x_1,x_2\in W^{1,2}_0 $ one has 
\begin{align}\label{eq:Bsym}
    \langle x_1, B[x_1,x_2] \rangle_{L^2} 
    &
    = \int_{0}^{1} x_1(\theta) \nabla (x_1x_2)(\theta) \,d\theta
    = [ x_1^2 x_2 ]^{\theta=1}_{\theta=0} - 
    \int_{0}^{1}( \nabla x_1)(\theta) (x_1x_2)(\theta)\,d\theta  \nonumber 
   \\ & = -\tfrac{1}{2} \langle x_2, B(x_1) \rangle_{L^2}.
\end{align}
Note that for $B_M$ as defined in~\eqref{eq:defB_M}, using~\eqref{eq:Bsym} one has
\begin{align}\label{eq:BsymM}
    \langle x_1, B_M[x_1,x_2] \rangle_{L^2} 
    &    
    =\langle x_1, P_M B[P_M x_1, P_M x_2] \rangle_{L^2}
    = -\tfrac{1}{2} \langle x_2, B_M(x_1) \rangle_{L^2}.
\end{align}
Thus, we deduce that
\begin{equation}\label{eq:BsymM2}
    \langle x, B_M(x) \rangle_{L^2}=0,\quad \forall~x\in H_M.
\end{equation}
Finally, on several occasions we shall need the following observation:
\begin{align}
P_M \nabla x &= \nabla Q_M x, \quad x\in W^{1,2},\, M\in \N, \label{eq:PMnabla}\\
\nabla P_M x & = Q_M \nabla x, \quad x\in W^{1,2}_0,\, M\in \N,\label{eq:nablaPM}
\end{align}
where $Q_M\in \mathcal{L}(L^2)$ is the orthogonal projection onto $\operatorname{span}\{ \cos(k\pi \cdot) \colon k\in \{1,2,\ldots,M\}\}$.

\section{(Exponential) moment bounds and regularity properties}\label{sec:bounds}

Moment bounds for (a Galerkin approximation of) the stochastic Burgers equation have been established in e.g.~\cite[Proposition 2.1]{MR1722786} and~\cite[Lemma 2.2]{LiuRockner:2010}. Moreover, exponential moment bounds for (the Galerkin approximation of) the stochastic Burgers equation have been established in e.g.~\cite[Equation (5.5)]
{CoxHutzenthalerJentzen:2024},~\cite[Proposition 2.2]{MR1722786}. However, to the best of our knowledge the precise bounds that we needed to obtain the necessary regularity results of the associated Kolmogorov 
equation are \emph{not} yet available in the literature (for example,~\cite[Lemma 2.2]{LiuRockner:2010} deals with a different projection of the noise,~\cite{CoxHutzenthalerJentzen:2024} and~\cite{MR1722786} take deterministic initial values, etc.). We thus provide the required bounds in Lemmas~\ref{lem:expest} and~\ref{lem:X_sup_moments} below.

As the proofs of Lemmas~\ref{lem:expest},~\ref{lem:momLinfty} and~\ref{lem:X_sup_moments} are technical but follow from classical arguments, they are postponed to Appendix~\ref{app}.

\begin{lemma}\label{lem:expest}
\begin{enumerate}[(i)]
\item\label{it:momentbounds} For all $p\in [4,\infty)$ there exists $C_{p}(T,Q)\in (0,\infty)$ such that if the initial condition satisfies $\E [ \| X_0 \|_{L^2}^{p}]<\infty$, then
\begin{equation}\label{pmoments}
\sup_{M\in \N} 
\E \Bigl[ \sup_{t\in [0,T]}\|X_M(t)\|_{L^2}^{p}+
\int_{0}^{T}\| X_M(t) \|_{L^2}^{p-2} \|\nabla X_M(t)\|_{L^2}^2 \,dt\,\Bigr] \leq C_p(T,Q) \E [ \| X_0 \|_{L^2}^{p}].
\end{equation}
\item\label{it:expmomentbounds} Assume that there exists a $\gamma_0\in (0,\infty)$ such that the initial value $X_0$ satisfies the exponential moment bound $ \E [\exp(\gamma_0 \| X_0\|_{L^2}^2 ) ]< \infty$.
For all $\beta \in (0, \frac{\gamma_0}{1 + 2 \gamma_0  \| Q \|_{\mathcal{L}(L^2)}} )$  
 one has
\begin{equation}  \label{expmoments_ranIV}
\begin{aligned}
& \underset{M\in \N}\sup~  \E\Bigl[ \exp\Bigl(\beta \underset{0\leq t\leq T}\sup~\|X_M(t)\|_{L^2}^2
   + \beta \int_0^T\!\!  \| \nabla X_M(s)\|_{L^2}^2 \,ds \Bigr) \Bigr]\\
& \qquad \leq    2 e^{\beta T \Tr (Q)} \bigl( \E\bigl[\exp\bigl(\gamma_0 \|X_0\|_{L^2}^2 \bigr)\bigr]\bigr)^{\frac{\beta}{\gamma_0}}.
\end{aligned}
\end{equation}

\end{enumerate}
 \end{lemma}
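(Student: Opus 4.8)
The starting point for both parts is the It\^o energy identity for the squared $L^2$-norm of the Galerkin process. Applying the infinite-dimensional It\^o formula to $t\mapsto\|X_M(t)\|_{L^2}^2$ along~\eqref{eq:GalsolBurgers}, and using that $X_M(t)\in H_M$ together with the dissipativity identity $\langle x,A_Mx\rangle_{L^2}=-\|\nabla x\|_{L^2}^2$ for $x\in H_M$, the cancellation $\langle x,B_M(x)\rangle_{L^2}=0$ from~\eqref{eq:BsymM2}, and $\Tr(P_MQP_M)\le\Tr(Q)$, I obtain
\[
\|X_M(t)\|_{L^2}^2+2\int_0^t\|\nabla X_M(s)\|_{L^2}^2\,ds=\|P_MX_0\|_{L^2}^2+\int_0^t\Tr(P_MQP_M)\,ds+2m_t,
\]
where $m_t=\int_0^t\langle X_M(s),dW^Q(s)\rangle$ is a continuous local martingale with $d\langle m\rangle_s=\|\sqrt{Q}X_M(s)\|_{L^2}^2\,ds\le\|Q\|_{\mathcal L(L^2)}\|X_M(s)\|_{L^2}^2\,ds$. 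The structural point that makes every constant below independent of $M$ is that the quadratic nonlinearity drops out of this identity by~\eqref{eq:BsymM2}, while $P_M$ is a contraction on $L^2$ mapping into $W^{1,2}_0$, so that the Poincar\'e inequality~\eqref{eq:poincare} applies on $H_M$ uniformly in $M$.

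For part~\eqref{it:momentbounds} I would apply It\^o's formula once more, to $y\mapsto y^{p/2}$ with $y=\|X_M(t)\|_{L^2}^2$. The contribution of $-2\int\|\nabla X_M\|^2$ produces the good dissipative term $-p\|X_M\|_{L^2}^{p-2}\|\nabla X_M\|_{L^2}^2$, which is precisely the time-integral to be estimated, while the two noise contributions (the trace term and the It\^o correction $\tfrac{p(p-2)}{2}\|X_M\|^{p-4}\|\sqrt QX_M\|^2$) are dominated by $C_{p,Q}\|X_M\|_{L^2}^{p-2}$ via $\Tr(P_MQP_M)\le\Tr(Q)$ and $\|\sqrt QX_M\|^2\le\|Q\|_{\mathcal L(L^2)}\|X_M\|^2$. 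After taking expectations (removing the martingale term by a standard localization argument), Young's inequality and Gr\"onwall's lemma bound $\E[\|X_M(t)\|_{L^2}^p]$ and the integrated dissipation uniformly in $M$ and $t\in[0,T]$; the supremum in time inside the expectation is then recovered by estimating the martingale term via the Burkholder--Davis--Gundy inequality followed by Young's inequality to reabsorb $\sup_t\|X_M\|_{L^2}^p$.

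Part~\eqref{it:expmomentbounds} is where the real work lies. I would introduce, for a suitably tuned $\theta>0$, the Dol\'eans--Dade exponential $\mathcal E^\theta_t=\exp\bigl(\theta m_t-\tfrac{\theta^2}{2}\langle m\rangle_t\bigr)$, a nonnegative supermartingale with $\mathcal E^\theta_0=1$, so that $\E[\mathcal E^\theta_t]\le 1$ and, by the maximal inequality for nonnegative supermartingales, $\E\bigl[(\sup_{t\le T}\mathcal E^\theta_t)^r\bigr]\le(1-r)^{-1}$ for every $r\in(0,1)$; the choice $r=\tfrac12$ is the source of the factor $2$ in the claimed bound. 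Solving the energy identity for $m_t$ and inserting it into $\mathcal E^\theta_t$ rewrites $\beta\|X_M(t)\|_{L^2}^2+\beta\int_0^t\|\nabla X_M\|_{L^2}^2$ as $\beta\|X_0\|_{L^2}^2$, the deterministic term $\beta t\,\Tr(Q)$, a multiple of $\log\mathcal E^\theta_t$, and a residual multiple of $\langle m\rangle_t=\int_0^t\|\sqrt QX_M\|^2$. Using $\|\sqrt QX_M\|^2\le\|Q\|_{\mathcal L(L^2)}\|X_M\|^2$ and the Poincar\'e inequality~\eqref{eq:poincare}, this residual is dominated by a multiple of the dissipation $\int_0^t\|\nabla X_M\|^2$ and hence absorbed into it, provided the tilt is not too strong; this is where the smallness condition $2\beta\|Q\|_{\mathcal L(L^2)}<1$ enters. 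Taking exponentials, passing to the supremum in $t$, and applying H\"older's inequality in $\Omega$ with exponent $\gamma_0/\beta$ on the initial-data factor $e^{\beta\|X_0\|_{L^2}^2}$ (turning it into $\E[e^{\gamma_0\|X_0\|_{L^2}^2}]^{\beta/\gamma_0}$) against the conjugate exponent on the supermartingale factor, then invoking the supermartingale maximal bound, yields the asserted estimate with $M$-independent constants.

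The main obstacle is the last step: the quadratic variation $\langle m\rangle_t$ is controlled by $\int\|X_M\|^2$, i.e.\ by the very quantity inside the exponential, so the estimate is genuinely self-referential, and one must select the tilt $\theta$, the supermartingale power $r$, and the H\"older exponents \emph{simultaneously} so that the Poincar\'e/It\^o correction is absorbed into the dissipation while the initial-data factor is matched to the hypothesis $\E[e^{\gamma_0\|X_0\|_{L^2}^2}]<\infty$. Tracking these choices is what produces the sharp threshold $\beta<\gamma_0/(1+2\gamma_0\|Q\|_{\mathcal L(L^2)})$, equivalently $\tfrac{\beta}{\gamma_0}+2\beta\|Q\|_{\mathcal L(L^2)}<1$, and the clean prefactor $2e^{\beta T\Tr(Q)}$; the fact that the running supremum of $\|X_M\|^2$ and the full time-integral of $\|\nabla X_M\|^2$ appear together inside the exponential is exactly what makes this balancing delicate.
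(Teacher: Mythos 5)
Your proposal is correct and is essentially the paper's own argument: part (i) is the same It\^o energy identity, localization, Gr\"onwall and Burkholder--Davis--Gundy scheme, and part (ii) rests on exactly the same ingredients (energy identity, Dol\'eans--Dade exponential supermartingale, Poincar\'e absorption of $\langle m\rangle_t$ into the dissipation, the supermartingale maximal inequality, and H\"older with exponent $\gamma_0/\beta$ on the initial-data factor), producing the same threshold $\beta/\gamma_0+2\beta\|Q\|_{\mathcal{L}(L^2)}<1$. The only difference is bookkeeping: the paper carries the dissipation inside the exponential via $Y^{(\beta)}=\exp\bigl(\sup_{t\le T}\bigl(2\beta m_t-\beta\int_0^t\|\nabla X_M(s)\|_{L^2}^2\,ds\bigr)\bigr)$, uses the self-similarity $(Y^{(\beta)})^{q}=Y^{(q\beta)}$ under H\"older, and gets $\E[Y^{(q\beta)}]\le 2$ by integrating a tail bound, which is precisely your ``simultaneous'' tuning of the tilt $\theta$, the supermartingale power $r\le\tfrac12$, and the H\"older exponents (and resolves the point that absorbing with a fixed tilt before H\"older would leave the supermartingale factor raised to a power exceeding $1$).
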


Note that if the initial value $X_0$ is deterministic, i.e., if there exists $x_0\in L^2$ such that $\PP(X_0=x_0)=1$, then for all $\beta\in (0,\frac{1}{2 \| Q \|_{\mathcal{L}(L^2)}})$  one has
\begin{equation}  \label{expmoments_detIV}
 \underset{M\in \N}\sup~  \E\Bigl[ \exp\Bigl(\beta \underset{0\leq t\leq T}\sup~\|X_M(t)\|_{L^2}^2
   + \beta \int_0^T\!\!  \| \nabla X_M(s)\|_{L^2}^2 \,ds \Bigr) \Bigr]
\leq    2 e^{\beta T \Tr (Q)} \exp\bigl(\beta \|x_0\|_{L^2}^2 \bigr).
\end{equation}

We next provide some moment bounds for the $L^\infty$ norm of $X_M$, this is an intermediate step to obtain Lemma~\ref{lem:X_sup_moments}.
\begin{lemma}\label{lem:momLinfty}
    For all $\alpha \in ( \frac{1}{4}, \frac{1}{2})$ and $p\in   [\frac{8}{3},\infty)$,
    there exists $C_{p,\alpha}(T,Q)\in(0,\infty)$, such that if the initial value satisfies the conditions
     $X_0\in D((-A)^{\alpha})$ a.s.\ and $\E\bigl[\| (-A)^\alpha X_0 \|_{L^2}^{p}\bigr] +  \E\bigl[\| X_0 \|_{L^2}^{3p}\bigr]<\infty$,
    then 
\begin{align}\label{mom_X_infty}
\underset{M\in \N}\sup~ \E\bigl[ \underset{t\in [0,T], z \in [0,1]}\sup~ | X_M(t)(z) |^{p} \bigr]
& \leq 
C_{p,\alpha}(T,Q)\Bigl(1+\E\bigl[ \| (-A)^\alpha X_0 \|_{L^2}^{p} \bigr]+  \E\bigl[ \| X_0 \|_{L^2}^{3p}\bigr] \Bigr).
\end{align}
\end{lemma}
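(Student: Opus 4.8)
We must bound the $p$-th moment of the sup-norm $\sup_{t,z}|X_M(t)(z)|$ uniformly in $M$. The natural route is through the mild formulation~\eqref{eq:mildGalsolBurgers} combined with the Sobolev embedding~\eqref{eq:Linfty_DA_bound}: since $D((-A)^{\frac14+\delta})\subseteq L^\infty$, it suffices to control $\E[\sup_t\|(-A)^{\alpha}X_M(t)\|_{L^2}^p]$ for some $\alpha\in(\frac14,\frac12)$, and then the pointwise sup over $z$ is absorbed by the embedding constant $C_\delta$ (with $\delta=\alpha-\frac14$).

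**The plan.** The plan is to apply $(-A)^{\alpha}$ to the mild formulation and estimate the three resulting terms separately in $L^p(\Omega,L^2)$.

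First, for the linear semigroup term $e^{tA}P_M X_0$, I would use the analyticity bound Lemma~\ref{lem:A_analytic}(1) together with the commutation of $(-A)^\alpha$ with $e^{tA}$ and with $P_M$; since $X_0\in D((-A)^\alpha)$ we directly get $\|(-A)^\alpha e^{tA}P_M X_0\|_{L^2}\le \|(-A)^\alpha X_0\|_{L^2}$, contributing the term $\E[\|(-A)^\alpha X_0\|_{L^2}^p]$.

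Second, for the stochastic convolution $Z_M(t)=\int_0^t e^{(t-s)A}P_M\,dW^Q(s)$, I would invoke a factorization (stochastic Fubini / Da Prato--Zabczyk factorization) argument to bound $\E[\sup_t\|(-A)^\alpha Z_M(t)\|_{L^2}^p]$ by a constant depending only on $T$, $Q$, $p$, and $\alpha$ — crucially uniform in $M$ because $P_M$ is a contraction and $\Tr(Q)<\infty$. This is standard for trace-class noise provided $\alpha<\frac12$, and yields the additive constant $C_{p,\alpha}(T,Q)$.

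Third, the deterministic-in-form but path-dependent convolution $\int_0^t e^{(t-s)A}B_M(X_M(s))\,ds$ is the main obstacle, because of the quadratic growth of $B$. Here I would write $B_M(x)=P_M\nabla(P_Mx)^2$ and use $(-A)^\alpha\int_0^t e^{(t-s)A}P_M\nabla(\cdots)\,ds$, moving half a derivative onto the semigroup: with $\beta=\frac12$ one splits $(-A)^\alpha e^{(t-s)A}\nabla = (-A)^{\alpha+\frac12}e^{(t-s)A}\cdot(-A)^{-\frac12}\nabla$, where $(-A)^{-\frac12}\nabla$ is bounded on $L^2$ (Lemma~\ref{lem:deriv_bdd_fracdomains} with the appropriate index, or directly $\|(-A)^{-\frac12}\nabla x\|_{L^2}\le\|x\|_{L^2}$). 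Then Lemma~\ref{lem:A_analytic}(1) gives the singular kernel $(t-s)^{-\alpha-\frac12}$, which is integrable since $\alpha<\frac12$. It remains to control $\|(P_Mx)^2\|_{L^2}=\|P_MX_M(s)\|_{L^4}^2\le\|X_M(s)\|_{L^4}^2$, and by the Gagliardo--Nirenberg inequality~\eqref{eq:GN} with $p=4$ one has $\|X_M(s)\|_{L^4}^2\le C\|X_M(s)\|_{L^2}^{3/2}\|X_M(s)\|_{W^{1,2}}^{1/2}$. Inserting this, applying Hölder in $s$ to manage the singular kernel and the $\|\nabla X_M\|_{L^2}^{1/2}$ factor, and then Hölder in $\Omega$, the final bound reduces to moments of the form $\E[\sup_t\|X_M\|_{L^2}^{q}]$ and $\E[\int_0^T\|X_M\|_{L^2}^{q'}\|\nabla X_M\|_{L^2}^2\,ds]$, which are precisely controlled uniformly in $M$ by Lemma~\ref{lem:expest}\eqref{it:momentbounds}. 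Tracking the exponents carefully, the cubic combination produces the $\E[\|X_0\|_{L^2}^{3p}]$ term on the right-hand side; the hypothesis $p\ge\frac83$ is exactly what guarantees the relevant intermediate integrability exponents lie in the admissible range $[4,\infty)$ for Lemma~\ref{lem:expest}. The technical heart is thus the bookkeeping of Hölder exponents so that the singular time kernel, the $L^4$-interpolation, and the uniform-in-$M$ moment bounds all fit together.
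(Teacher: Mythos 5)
Your treatment of the first two terms of the mild formulation~\eqref{eq:mildGalsolBurgers} is correct and coincides with the paper's proof: the semigroup term is handled by the contraction property and~\eqref{eq:Linfty_DA_bound}, and the stochastic convolution by the factorization method (this is exactly Lemma~\ref{lem:reg_stochconv}). The gap is in the nonlinear convolution term, and it is not a matter of bookkeeping: the exponents you propose cannot be made to close. Your scheme gives
\[
\Bigl\| (-A)^{\alpha}\!\int_0^t e^{(t-s)A}B_M(X_M(s))\,ds \Bigr\|_{L^2}
\leq C \int_0^t (t-s)^{-\alpha-\frac12}\,\|X_M(s)\|_{L^2}^{\frac32}\,\|X_M(s)\|_{W^{1,2}}^{\frac12}\,ds ,
\]
using Lemma~\ref{lem:A_analytic}, Lemma~\ref{lem:deriv_bdd_fracdomains} and~\eqref{eq:GN}. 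The only gradient moments available uniformly in $M$ are those of Lemma~\ref{lem:expest}\eqref{it:momentbounds}, in which $\|\nabla X_M\|_{L^2}$ appears to the power \emph{exactly} $2$. Hence, after pulling out $\sup_s\|X_M(s)\|_{L^2}^{3/2}$, H\"older in $s$ must put the factor $\|X_M(s)\|_{W^{1,2}}^{1/2}$ to the power $4$, and therefore the kernel to the conjugate power $\tfrac43$; integrability then requires $(\alpha+\tfrac12)\cdot\tfrac43<1$, i.e., $\alpha<\tfrac14$. But the Sobolev embedding~\eqref{eq:Linfty_DA_bound} that motivates the whole approach requires $\alpha>\tfrac14$. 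These two constraints are in exact conflict, so there is no admissible $\alpha$; more generally, any Gagliardo--Nirenberg interpolation of $\|X_M\|_{L^4}^2$ between $L^2$ and $W^{1,2}$ alone produces a gradient power $\theta\geq\tfrac12$ and the constraint $\alpha<\tfrac12-\tfrac{\theta}{2}\leq\tfrac14$, so the failure is structural, not a consequence of a suboptimal choice.

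The paper's proof contains precisely the extra idea needed to break this deadlock, and it is absent from your proposal: a self-improvement (absorption) argument. Working with the pointwise Green-function representation~\eqref{eq:mildGalsolBurgers_conv}, the nonlinear term is estimated via the interpolation
\[
\|X_M^2(s,\cdot)\|_{L^{\tilde q}} \leq \|X_M(s,\cdot)\|_{L^{\infty}}^{\frac12}\,\|X_M^2(s,\cdot)\|_{L^{\tilde q}}^{\frac34},
\]
which \emph{borrows} half a power of the unknown sup-norm itself (see~\eqref{Holder-J(X_M)}); this lowers the effective gradient power from $\tfrac12$ to $\tfrac{3}{4\tilde p}$, so that H\"older in time can reach gradient power $2$ while keeping the singular kernel integrable. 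The borrowed factor $\sup_{t,z}|X_M(t,z)|^{1/2}$ is then absorbed into the left-hand side by Young's inequality (see~\eqref{upper-nonlin}), which is legitimate because the Galerkin system is finite-dimensional and its sup-norm is almost surely finite. If you wish to salvage your semigroup-based route, the same trick works there too: interpolating $\|X_M(s)\|_{L^4}^2\leq \|X_M(s)\|_{L^{\infty}}^{1/2}\|X_M(s)\|_{L^3}^{3/2}$ and then applying~\eqref{eq:GN} to the $L^3$ norm reduces the gradient power to $\tfrac14$, relaxing the kernel constraint to $\alpha<\tfrac38$, which is compatible with $\alpha>\tfrac14$; but without this absorption step your argument, as written, fails.
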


Finally, the following lemma provides moment bounds for the space-time regularity of $X_M$.

\begin{lemma}\label{lem:X_sup_moments}
Let $\alpha \in ( \frac{1}{4}, \frac{1}{2})$ and $p\in  [\frac{4}{3},\infty)$.
For all $\lambda,\gamma \in  (0,\frac{1}{2})$ satisfying the condition $\lambda + \gamma < \alpha $, there exists $C_{\alpha,\gamma,\lambda,p}(T,Q) 
\in (0,\infty)$ such that if the initial value satisfies the conditions $X_0 \in D((-A)^{\alpha})$ a.s.\ and $\E\bigl[\| (-A)^\alpha X_0 \|_{L^2}^{2p}\bigr] +  \E\bigl[\| X_0 \|_{L^2}^{6p}\bigr]<\infty$, then one has
\begin{align} \label{eq:XM_moment_mixed_reg}  
\underset{M\in \N}\sup~ \E \bigl[ \| (-A)^{\lambda} X_M \|^p_{C^{\gamma}([0,T], L^2)}\bigr]
&\leq C_{\alpha,\gamma,\lambda,p}(T,Q)
\Bigl( 1+\E \bigl[\| (-A)^\alpha X_0 \|_{L^2}^{2p}\bigr]+  \E \bigl[\| X_0 \|_{L^2}^{6p} \bigr]\Bigr).
\end{align}
\end{lemma}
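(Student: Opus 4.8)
The plan is to estimate the H\"older seminorm in time of the process $t\mapsto (-A)^\lambda X_M(t)$ directly from the mild formulation~\eqref{eq:mildGalsolBurgers}, treating the three constituent terms — the semigroup acting on the initial data, the deterministic (nonlinear) convolution, and the stochastic convolution — separately. The overall philosophy is that $C^\gamma([0,T],L^2)$ regularity of order $\lambda$ is purchased by spending smoothing of order $\lambda+\gamma<\alpha$, so that the factor $(-A)^\alpha$ appearing in the hypotheses on $X_0$ and the regularity available for $B_M(X_M)$ (via Lemma~\ref{lem:fracpow_B_est}) leaves exactly enough room. Since we must show bounds \emph{uniform in $M$}, the identities~\eqref{eq:PMnabla}--\eqref{eq:nablaPM} and the fact that $(-A)^\sigma$ commutes with $P_M$ are used throughout so that projections never cost smoothing.

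First I would handle the initial-data term $e^{tA}P_MX_0$. Using the analyticity estimate Lemma~\ref{lem:A_analytic}(1) to absorb $(-A)^{\lambda}$ and Lemma~\ref{lem:A_analytic}(2) to convert the increment $e^{sA}-e^{rA}=e^{rA}(e^{(s-r)A}-I)$ into a factor $|s-r|^\gamma$ at the cost of $(-A)^{\lambda+\gamma}$, one bounds this contribution by $C|s-r|^\gamma\|(-A)^{\lambda+\gamma}X_0\|_{L^2}\le C|s-r|^\gamma\|(-A)^\alpha X_0\|_{L^2}$, where the last step uses $\lambda+\gamma<\alpha$ and~\eqref{eq:A_fracpownorminc}; after raising to the $p$-th power and taking expectations this gives the $\E[\|(-A)^\alpha X_0\|_{L^2}^{2p}]$ term (in fact only the $p$-th moment is needed here). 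For the stochastic convolution $\int_0^t e^{(t-s)A}P_M\,dW^Q(s)$, the standard factorization/Kolmogorov approach (via the Da Prato--Zabczyk maximal-type or factorization lemma) yields $C^\gamma([0,T],D((-A)^\lambda))$ bounds independent of $M$ because $Q$ is trace-class, so $\|(-A)^\lambda Q^{1/2}\|_{\mathcal L_2}$ is finite for $\lambda<1/2$; this term contributes only a deterministic constant $C(T,Q)$ and no dependence on $X_0$.

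The main work — and the main obstacle — is the nonlinear convolution term $\Psi_M(t)=\int_0^t e^{(t-s)A}B_M(X_M(s))\,ds$. The strategy is to exploit the divergence structure $B(x)=\nabla(x^2)$ to move one derivative onto the semigroup: writing $B_M(X_M)=P_M\nabla\big((P_MX_M)^2\big)$ and using~\eqref{eq:PMnabla} together with Lemma~\ref{lem:fracpow_B_est} (applied with an arbitrarily small $\gamma'$), one estimates $\|(-A)^{\lambda+\gamma}e^{(t-s)A}B_M(X_M(s))\|_{L^2}$ by a singular kernel $(t-s)^{-(\lambda+\gamma+\frac12+\kappa)}$ times $\|(-A)^{\frac14+\cdots}X_M(s)\|_{L^2}\cdot\|\cdots X_M(s)\|_{L^2}$, where the extra half-power comes from absorbing $\nabla$ into the semigroup via Lemma~\ref{lem:A_analytic}(1); the singularity is integrable precisely because $\lambda+\gamma<\alpha<\frac12$ leaves the total exponent below $1$. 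To control the H\"older increment of $\Psi_M$ one splits $\Psi_M(s)-\Psi_M(r)$ in the usual two pieces — the tail integral $\int_r^s$ and the difference $\int_0^r(e^{(s-r)A}-I)e^{(r-\tau)A}(\cdots)\,d\tau$ — applying Lemma~\ref{lem:A_analytic}(2) to the second to extract $|s-r|^\gamma$. The supremum over $s\in[0,T]$ of the required fractional-Sobolev norm of $X_M(s)$ is then controlled by Lemma~\ref{lem:momLinfty} and the moment bounds of Lemma~\ref{lem:expest}; the quadratic nonlinearity is what produces the $\E[\|X_0\|_{L^2}^{6p}]$ term, since two factors of $X_M$ at the $L^\infty$/fractional level, raised to the $p$-th power and estimated via H\"older in $\Omega$, demand moments up to order $6p$. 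The delicate point is book-keeping the exponents so that the smoothing budget $\frac12-(\lambda+\gamma)>0$ simultaneously renders the time-kernel integrable \emph{and} supplies the $\gamma$-H\"older factor, while keeping every constant independent of $M$; I expect this exponent accounting, rather than any single hard estimate, to be where the real care lies.
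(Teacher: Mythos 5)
Your decomposition of the mild formulation~\eqref{eq:mildGalsolBurgers} into initial-data, nonlinear-convolution and stochastic-convolution parts is exactly the paper's skeleton, and your treatment of the initial-data term coincides with the paper's; the problem lies in the nonlinear convolution. To use Lemma~\ref{lem:fracpow_B_est} on $X_M^2(s)$ you need, after taking moments, bounds of the form $\sup_{M\in\N}\E\bigl[\sup_{s\in[0,T]}\|(-A)^{\frac14+4\gamma'}X_M(s)\|_{L^2}^{p}\bigr]<\infty$ (together with the companion factor $\|(-A)^{4\gamma'}X_M(s)\|_{L^2}$). Neither of the lemmas you invoke supplies this: Lemma~\ref{lem:momLinfty} controls only the space--time $L^\infty$ norm, and $L^\infty$ does not embed into $D((-A)^{\frac14+\delta})$; Lemma~\ref{lem:expest} controls $\sup_t\|X_M(t)\|_{L^2}$ and the \emph{time-integrated} quantity $\int_0^T\|\nabla X_M(t)\|_{L^2}^2\,dt$, and interpolating via $\|(-A)^{\sigma}x\|_{L^2}\le\|x\|_{L^2}^{1-2\sigma}\|(-A)^{\frac12}x\|_{L^2}^{2\sigma}$ would require $\sup_t\|\nabla X_M(t)\|_{L^2}$, which is not available. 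Pointwise-in-time moment bounds on positive-order fractional norms of $X_M$ are precisely the content of the lemma you are proving, so as written this step is circular. The paper avoids the issue by never putting fractional smoothness on $X_M^2$: it moves the \emph{whole} derivative onto the semigroup using Lemma~\ref{lem:deriv_bdd_fracdomains}, namely $\|(-A)^{\lambda}e^{tA}P_M\nabla v\|_{L^2}\le\|(-A)^{\lambda+\frac12}e^{tA}\|_{\mathcal{L}(L^2)}\,\|(-A)^{-\frac12}\nabla\|_{\mathcal{L}(L^2)}\,\|v\|_{L^2}\le C\,t^{-(\lambda+\frac12)}\|v\|_{L^2}$ for all $v\in L^2$, so the nonlinearity enters only through $\|X_M^2(s)\|_{L^2}=\|X_M(s)\|_{L^4}^2\le\|X_M(s)\|_{L^\infty}^2$, which Lemma~\ref{lem:momLinfty} does control uniformly in $s$ and $M$ (this is also exactly where the $\E\bigl[\|X_0\|_{L^2}^{6p}\bigr]$ term originates: Lemma~\ref{lem:momLinfty} applied with exponent $2p$). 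The $C^{\gamma}$ regularity of the convolution with kernel $t^{-(\lambda+\gamma+\frac12)}$, integrable precisely because $\lambda+\gamma<\frac12$, is then obtained by citing \cite[Proposition 3.6]{CoxHausenblas:2013}; your two-piece increment splitting would serve equally well, but only once the integrand is measured in $L^2$ rather than in a fractional space.

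A second, smaller error concerns the stochastic convolution: you justify the uniform $C^{\gamma}([0,T],D((-A)^{\lambda}))$ bound by asserting that trace-class $Q$ implies $\|(-A)^{\lambda}Q^{1/2}\|_{\mathcal{L}_2(L^2)}<\infty$ for $\lambda<\tfrac12$. This implication is false: $A$ and $Q$ need not commute (the paper stresses this point), and nothing prevents the eigenvectors $e_k$ of $Q$ from concentrating on high frequencies of $A$, making $\sum_{k}q_k\|(-A)^{\lambda}e_k\|_{L^2}^2$ diverge. The correct factorization argument --- the paper's Lemma~\ref{lem:reg_stochconv} --- uses only $\|P_M\sqrt{Q}\|_{\mathcal{L}_2(L^2)}\le(\Tr(Q))^{1/2}$ together with the semigroup smoothing $\|(-A)^{\lambda}e^{tA}\|_{\mathcal{L}(L^2)}\le C t^{-\lambda}$ under the condition $\lambda+\gamma<\tfrac12$; your conclusion for this term is true, but not for the reason you give.
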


\begin{rem}\label{rem:notoptimal}
The integrability conditions imposed on the initial value $X_0$ in Lemmas~\ref{lem:momLinfty} and~\ref{lem:X_sup_moments} may not be optimal, e.g.,\ in order to obtain moments of order $p$ for $X_M$, assuming that $\|X_0\|_L^2\in L^{3p}(\Omega,L^2)$ or $\|X_0\|_L^2\in L^{6p}(\Omega,L^2)$ for~\eqref{mom_X_infty} and~\eqref{eq:XM_moment_mixed_reg} respectively is sufficient in the proofs given in Appendix~\ref{app} but may not be necessary.
\end{rem}

\section{Regularity properties for solutions of the associated Kolmogorov equations}\label{sec:Kolmogorov}

In order to obtain estimates for the weak error, we need to analyse the regularity properties of solutions to the Kolmogorov equation associated with the Galerkin approximation $X_M$ of the stochastic Burgers equation, and to obtain bounds which are uniform with respect to $M\in\N$.

For any $M\in\N$ and any $x\in H_M$, the solution of~\eqref{eq:GalsolBurgers} with initial value $X_M(0)=x$ is denoted by $\bigl(X_M^x(t)\bigr)_{t\ge 0}$. Given a $T\in(0,\infty)$ and a function $\varphi \in C^2(L^2,\R)$ with bounded first and second order derivatives, we define the function $u_M\colon [0,T]\times H_M \rightarrow \R$ by
\begin{equation}\label{eq:Kolmogorov}
u_M(t,x) = \E[\varphi(X_M^x(t))],  \quad \forall~t\in [0,T],\, x\in H_M. 
\end{equation} 
By classical results on the Kolmogorov backward equation (see, e.g.,~\cite[Theorem 4.8.6]{KloedenPlaten:1992} or~\cite[Theorem 7.5.1]{DaPratoZabczyk:2002} combined with a localization argument
(to deal with the fact that both $B_M$ and $DB_M$ are not bounded) one has $u_M \in C^{1,2}([0,T]\times H_M, \R)$. Moreover, the mapping $u_M$ is solution to the partial differential equation
\begin{equation}\label{eq:Kolmogorov_eqn}
\left\lbrace
\begin{aligned}
\tfrac{\partial u_M}{\partial t}(t,x)
& =
\tfrac{1}{2}\sum_{j\in\N} q_j D^2 u_M(t,x).(P_M e_j, P_M e_j)
+ Du_M(t,x).(Ax+B_M(x)),\quad   t\in [0,T],\, x\in H_M;
\\
u(0,x) &= \varphi(x), \quad x\in H_M,
\end{aligned}\right.
\end{equation}
which is referred to as the Kolmogorov equation associated with the finite dimensional stochastic evolution equation~\eqref{eq:GalsolBurgers}. Above the first and second derivatives $Du_M(t,x)$ and $D^2u_M(t,x)$ are considered with respect to the variable $x\in H_M$, see the notation introduced in Section~\ref{ssec:notation}.

The main result of this section is the following result on the first and second order derivatives $Du_M(t,x)$ and $D^2u_M(t,x)$.

\begin{theo}\label{theo:Kol_reg}
Let $T\in(0,\infty)$ and assume that $\varphi\in C^2(L^2,\R)$ has bounded first and second order derivatives.

(i) Let $\alpha \in [0,1)$, and let $\delta,\epsilon \in (0,\infty)$ be arbitrarily small auxiliary parameters.
There exists  $C_{\alpha,\delta,\epsilon}(T,Q,\varphi) \in (0,\infty)$ such that for all 
$M\in \N$, all $x,h \in H_M$, and all $t\in (0,T]$ one has 
\begin{align}\label{eq:Kol_deriv_est}
|Du_M(t,x).(h)|
& \le C_{\alpha,\delta,\epsilon}(T,Q,\varphi) \,  t^{-\alpha} \,  e^{\epsilon \|x\|_{L^2}^2} \,  \big( 1+\|(-A)^{\frac{1}{4}+\delta } x\|_{L^2}^{6}\big)\, 
 \|(-A)^{-\alpha}h\|_{L^2}.
\end{align}

(ii) Let $\alpha,\beta\in [0,1)$ be such that $\alpha+\beta<1$, and let $\delta, \epsilon\in (0,\infty)$ be arbitrarily small auxiliary parameters. There exists  $C_{\alpha,\beta,\delta,\epsilon}(T,Q,\varphi)\in (0,\infty)$ such that for all $M\in \N$, all $x,g,h \in H_M$, and all $t\in (0,T]$ one has
\begin{align}\label{eq:Kol_2nd_deriv_est}
& |D^2 u_M(t,x).(g,h)| \notag \\ 
& \quad 
\le C_{\alpha,\beta,\delta,\epsilon}(T,Q,\varphi)\, t^{-(\alpha + \beta)}\,   e^{\epsilon \|x\|_{L^2}^2}\,   \big( 1+\|(-A)^{\frac{1}{4}+\delta }x\|_{L^2}^{16}\big) \;
\, \|(-A)^{-\alpha}g\|_{L^2}\|(-A)^{-\beta}h\|_{L^2}.
\end{align}
\end{theo}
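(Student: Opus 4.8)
The plan is to represent the derivatives of $u_M$ through the first and second variation processes of the Galerkin flow, and then to propagate the smoothing of the analytic semigroup $(e^{tA})_{t\ge 0}$ against the quadratic nonlinearity while keeping all dependence on the solution $X_M^x$ inside norms that admit the exponential moment bounds of Lemma~\ref{lem:expest}. Since $\varphi\in C^2(L^2,\R)$ has bounded derivatives and the coefficients of~\eqref{eq:GalsolBurgers} are smooth on the finite-dimensional space $H_M$, one may differentiate~\eqref{eq:Kolmogorov} in $x$ to obtain $Du_M(t,x).h=\E[D\varphi(X_M^x(t)).\eta^h_t]$ and $D^2u_M(t,x).(g,h)=\E[D^2\varphi(X_M^x(t)).(\eta^g_t,\eta^h_t)]+\E[D\varphi(X_M^x(t)).\zeta^{g,h}_t]$, where (using $DB_M(y).k=2B_M[y,k]$ and $B_M[y,k]=P_M\nabla(P_My\,P_Mk)$) the first variation $\eta^h=D_xX^x_M.h$ and the second variation $\zeta^{g,h}=D^2_xX^x_M.(g,h)$ solve, in mild form,
\begin{align*}
\eta^h_t &= e^{tA}h+2\int_0^t e^{(t-s)A}B_M[X^x_M(s),\eta^h_s]\,ds,\\
\zeta^{g,h}_t &= 2\int_0^t e^{(t-s)A}B_M[X^x_M(s),\zeta^{g,h}_s]\,ds+2\int_0^t e^{(t-s)A}B_M[\eta^g_s,\eta^h_s]\,ds.
\end{align*}
Since $|Du_M(t,x).h|\le \|D\varphi\|_\infty\,\E[\|\eta^h_t\|_{L^2}]$ and analogously for $D^2u_M$, the whole proof reduces to pathwise a priori bounds on $\|\eta^h_t\|_{L^2}$ and $\|\zeta^{g,h}_t\|_{L^2}$, followed by taking expectations.

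For part (i) I would establish the pathwise bound $\|\eta^h_t\|_{L^2}\le C t^{-\alpha}\|(-A)^{-\alpha}h\|_{L^2}\,R$, where $R$ is a random constant with good exponential moments. The free term is handled by Lemma~\ref{lem:A_analytic}(1), which gives $\|e^{tA}h\|_{L^2}\le C t^{-\alpha}\|(-A)^{-\alpha}h\|_{L^2}$ and thereby produces both the claimed time singularity and the negative fractional power. For the nonlinear term I would use $B_M[X,\eta]=P_M\nabla(P_MX\,P_M\eta)$ together with the factorization $e^{(t-s)A}\nabla=(-A)^{1/2}e^{(t-s)A}\,[(-A)^{-1/2}\nabla]$ and the boundedness of $(-A)^{-1/2}\nabla$ (Lemma~\ref{lem:deriv_bdd_fracdomains} with $\alpha=\tfrac12$), which trades one factor $(t-s)^{-1/2}$ for $\|P_MX\,P_M\eta\|_{L^2}$; the latter is then estimated by H\"older's inequality and the Gagliardo--Nirenberg inequality~\eqref{eq:GN} so that \emph{only} $\|X\|_{L^2}$, $\|\nabla X\|_{L^2}$ and a weighted $W^{1,2}$-norm of $\eta^h$ enter. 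The resulting weakly singular Gronwall inequality is closed on the weighted sup-norm $\sup_{0<s\le t}s^{\alpha}\|\eta^h_s\|_{L^2}$, simultaneously propagating the weighted gradient norm $\sup_{0<s\le t}s^{\alpha+1/2}\|(-A)^{1/2}\eta^h_s\|_{L^2}$; the crucial point is to arrange the $X$-dependence so that $R$ is dominated by the exponential of a \emph{sublinear} power of the energy $\sup_{[0,T]}\|X\|_{L^2}^2+\int_0^T\|\nabla X\|_{L^2}^2$.

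For part (ii), the term $\E[D^2\varphi(X^x_M(t)).(\eta^g_t,\eta^h_t)]$ is bounded by $\|D^2\varphi\|_\infty\,\E[\|\eta^g_t\|_{L^2}\|\eta^h_t\|_{L^2}]$ and inherits the singularity $t^{-\alpha}t^{-\beta}=t^{-(\alpha+\beta)}$ directly from part (i) applied to each factor (each valid for $\alpha,\beta\in[0,1)$ separately). For $\zeta^{g,h}$, the part linear in $\zeta$ is controlled by the same Gronwall argument as in part (i), whereas the inhomogeneity $2\int_0^t e^{(t-s)A}P_M\nabla(\eta^g_s\eta^h_s)\,ds$ is the source of the constraint $\alpha+\beta<1$. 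Here I would place \emph{more} smoothing on the semigroup, writing $e^{(t-s)A}\nabla=(-A)^{3/4+\delta}e^{(t-s)A}\,[(-A)^{-1/2}\nabla]\,(-A)^{-(1/4+\delta)}$ and invoking the dual embedding~\eqref{eq:Linfty_DA_bound-dual} to bound $\|(-A)^{-(1/4+\delta)}(\eta^g_s\eta^h_s)\|_{L^2}\le C\|\eta^g_s\eta^h_s\|_{L^1}\le C\|\eta^g_s\|_{L^2}\|\eta^h_s\|_{L^2}$, so that only the $L^2$-norms of the first variations (already controlled in part (i)) are needed. This yields a kernel of order $(t-s)^{-3/4-\delta}s^{-(\alpha+\beta)}$, whose time integral is finite precisely when $\alpha+\beta<1$ and is in fact less singular than $t^{-(\alpha+\beta)}$; since this inhomogeneous estimate is applied only once, it does not degrade the exponential moments.

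Finally I would take expectations. The sublinear dependence on the energy is decisive: by Young's inequality, the exponential of a sublinear power of the energy is dominated, for \emph{any} $\epsilon>0$, by $C_\epsilon\exp\!\big(\epsilon(\sup_{[0,T]}\|X\|_{L^2}^2+\int_0^T\|\nabla X\|_{L^2}^2)\big)$, so the deterministic-initial-value bound~\eqref{expmoments_detIV} of Lemma~\ref{lem:expest} produces the factor $e^{\epsilon\|x\|_{L^2}^2}$ with arbitrarily small $\epsilon$. A H\"older splitting then separates this exponential factor from the remaining polynomial dependence on the regularity of $X^x_M$, which is controlled by Lemmas~\ref{lem:momLinfty} and~\ref{lem:X_sup_moments} and produces the polynomial weights $1+\|(-A)^{\frac14+\delta}x\|_{L^2}^{6}$ and $1+\|(-A)^{\frac14+\delta}x\|_{L^2}^{16}$, respectively. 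I expect the main obstacle to be the first step: closing the singular (and, for the gradient norm, coupled) Gronwall estimates for the first variation while (a) preserving the exact singularity $t^{-\alpha}$ and the negative fractional power of $h$, and (b) keeping the $X$-dependence inside the $L^2$/energy norms for which exponential moments are available, with sublinear growth in the exponent. This forces the careful distribution of fractional powers of $-A$ between semigroup, gradient and products (via Lemmas~\ref{lem:fracpow_B_est},~\ref{lem:deriv_bdd_fracdomains}, the inequality~\eqref{eq:Linfty_DA_bound-dual} and~\eqref{eq:GN}) and is exactly where the adaptation of the strategy of~\cite{MR3862147} to the quadratic Burgers nonlinearity is most delicate.
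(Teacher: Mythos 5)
Your skeleton (representing $Du_M$ and $D^2u_M$ through the first and second variation processes, reducing everything to pathwise bounds on $\|\eta^h_t\|_{L^2}$, $\|\zeta^{g,h}_t\|_{L^2}$, and then taking expectations against exponential moments and the regularity lemmas) coincides with the paper's, and your treatment of the free term and of the expectation step is fine. The genuine gap is the core step, which you flag as the main obstacle but do not resolve: closing the weighted singular Gronwall inequality from the \emph{mild} formulation so that the random constant is the exponential of a sublinear power of the energy $E=\sup_{[0,T]}\|X_M^x\|_{L^2}^2+\int_0^T\|\nabla X_M^x\|_{L^2}^2\,ds$. Because the Burgers nonlinearity carries a derivative, any mild-formulation estimate produces a kernel at least as singular as $(t-s)^{-1/2}$, and Henry-type Gronwall lemmas then yield a factor $\exp\bigl(cK^{2}t\bigr)$, i.e.\ \emph{quadratic} dependence on the coefficient $K$ of the linearized equation. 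Concretely: estimating $\|X\eta\|_{L^2}\le\|X\|_{L^\infty}\|\eta\|_{L^2}$ gives $\exp\bigl(cT\sup_t\|X_M^x(t)\|_{L^\infty}^2\bigr)$, for which no exponential moments exist (Lemma~\ref{lem:momLinfty} only provides polynomial moments, and $c$ is not small); using H\"older and \eqref{eq:GN} as you propose gives the coefficient $g(s)=\|X(s)\|_{L^2}^{3/4}\|\nabla X(s)\|_{L^2}^{1/4}$, and the Gronwall exponent becomes of order $\|g\|_{L^8(0,T)}^{8/3}T\le T\sup_t\|X\|_{L^2}^{2}\bigl(\int_0^T\|\nabla X\|_{L^2}^2\,ds\bigr)^{1/3}$, which is jointly of degree $4/3$ in $E$ — superlinear — so your final Young-inequality step ($\exp(cE^\theta)\le C_\epsilon\exp(\epsilon E)$, valid only for $\theta<1$) does not apply; varying the H\"older/Gagliardo--Nirenberg exponents only interpolates between these two failures. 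The paper's mechanism for producing $e^{\epsilon\int_s^t\|\nabla X_M^x(r)\|_{L^2}^2\,dr}$ with \emph{arbitrarily small} $\epsilon$ is intrinsically tied to the differential (energy) formulation you abandon: in Lemma~\ref{lem_firstder_1} the dissipation $-\|\nabla\tilde{\eta}\|_{L^2}^2$ absorbs, via Young's inequality, the $\|\nabla\tilde{\eta}\|_{L^2}^{1/2}$ factor from Gagliardo--Nirenberg, leaving the coefficient $\epsilon\|\nabla X\|_{L^2}^2+C/\epsilon^2$; the resulting restriction $\alpha<\tfrac12$ is then removed not by another Gronwall pass but by the bootstrap \eqref{eq:VoC_tildeeta}, applying the already-controlled flow operators $\Pi_M(t,r)$ to the forcing $B_M[X_M^x(r),e^{(r-s)A}h]$ (Lemmas~\ref{lem_firstder_2} and~\ref{lem_firstder_3}). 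Note also that propagating $\sup_s s^{\alpha+1/2}\|(-A)^{1/2}\eta^h_s\|_{L^2}$ through the mild form, as your scheme requires, hits a non-integrable kernel $(t-s)^{-1}$, or, after redistributing fractional powers via Lemma~\ref{lem:fracpow_B_est}, demands control of roughly $\|(-A)^{3/4}X_M^x\|_{L^2}$, beyond what Lemma~\ref{lem:X_sup_moments} provides.

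There is a second, more local flaw in your part (ii). The factorization $e^{(t-s)A}\nabla=(-A)^{3/4+\delta}e^{(t-s)A}\,[(-A)^{-1/2}\nabla]\,(-A)^{-(1/4+\delta)}$ requires commuting $\nabla$ past $(-A)^{-(1/4+\delta)}$, which is false for the Dirichlet Laplacian (by \eqref{eq:PMnabla}--\eqref{eq:nablaPM}, $\nabla$ exchanges the sine and cosine systems); and the correct ordering, namely bounding $\|(-A)^{-(1/4+\delta)}[(-A)^{-1/2}\nabla(\eta^g\eta^h)]\|_{L^2}\le C\|(-A)^{-1/2}\nabla(\eta^g\eta^h)\|_{L^1}$ via \eqref{eq:Linfty_DA_bound-dual}, fails because the Riesz-transform-type operator $(-A)^{-1/2}\nabla$ is not bounded on $L^1$ — this is exactly why Lemma~\ref{lem:deriv_bdd_sqrtA} is stated only for $p\in(1,\infty)$. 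Hence your claim that ``only the $L^2$-norms of the first variations are needed'' is incorrect: the paper works in $L^q$ with $q>1$, passes by the Sobolev inequality \eqref{eq:SobolevI} to $\|(-A)^{1/4-1/(4q)}\eta\|_{L^2}$, and therefore genuinely needs the positive-order estimates of Lemma~\ref{lem_firstder_3} for the first variation — precisely the fractional regularity your construction does not deliver.
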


Let us describe the strategy for the proof of Theorem~\ref{theo:Kol_reg}. It is based on several intermediate results that we shall present below. The first order derivative $D u_M(t,x).(h)$ can be expressed as follows: for all $M\in \N$, $x,h\in H_M$, and $t\ge 0$, one has
\begin{align}\label{eq:Kol_deriv}
D u_M(t,x) .(h) =\E\bigl[D\varphi(X_M^{x}(t)) . (\eta_M^h(t))\bigr],
\end{align}
where the process $t\in[0,\infty)\mapsto \eta_M^h(t)\in H_M$ is the solution to the linear evolution equation
\begin{equation}\label{eq:X_deriv_initialvalue}
\left\lbrace
\begin{aligned}
&\frac{d}{dt} \eta_M^h(t)=A\eta_M^h(t)+2B_M\bigl[X_M^{x}(t),\eta_M^h(t)\bigr],\quad t\geq 0,\\
&\eta_M^h(0)=h.
\end{aligned}
\right.
\end{equation}
This is proven in e.g.~\cite[Theorem 7.3.6]{DaPratoZabczyk:2002} for the case that the drift and diffusion coefficients are bounded and have bounded derivatives; we can reduce to this setting by localization.
To simplify notation, the dependence of $\eta_M^h(t)$ with respect to $x$ is not mentioned, and the same applies to the other auxiliary processes introduced below. More generally, for any $s\ge 0$, we introduce the process $t\in[s,\infty) \mapsto \eta_M^h(t|s)$ defined on $[s,\infty)$ by
\begin{equation}\label{eta(t|s)}
\left\lbrace
\begin{aligned}
&\frac{d}{dt} \eta_M^h(t|s) = A \eta_M^h(t|s) + 2 B_M\bigl[X_M^x(t), \eta_M^h(t|s)\bigr], \quad t\ge s,\\
&\quad \eta_M^h(s|s)=h.
\end{aligned}
\right.
\end{equation}
Finally, we define the random linear operators $\Pi_M(t,s)\in\mathcal{L}(H_M)$ for all $t\ge s$ given by
\[
\Pi_M(t,s)h=\eta_M^h(t|s),\quad \forall~t\ge s\ge 0,~\forall x,h\in H_M.
\]
Observe that one has the identity $\eta_M^h(t)=\Pi_M(t,0)h$. The solution to~\eqref{eta(t|s)} has the mild formulation
\begin{equation}\label{eq:eta_mild}
\eta_M^h(t|s)=e^{(t-s)A}h+2\int_{s}^{t}e^{(t-r)A}B_M\bigl[X_M^x(r), \eta_M^h(r|s)\bigr]\,dr,\quad \forall~t\ge s.
\end{equation}
In view of this mild formulation and the smoothing property of $e^{tA}$ (see Lemma~\ref{lem:A_analytic}), the term $t^{-\alpha}\|(-A)^{-\alpha}h\|_{L^2}$ on the right-hand side of~\eqref{eq:Kol_deriv_est}
is to be expected. However, the mild formulation above is not sufficient to prove~\eqref{eq:Kol_deriv_est} and obtain the required properties for the linear operators $\Pi_M(t,s)$. We thus define
\begin{equation}\label{tilde-eta}
\tilde{\eta}_M^h(t|s) = \eta_M^h(t|s) - e^{(t-s) A}h, \quad t\geq s. 
\end{equation}
Properties of the linear operators $\Pi_M(t,s)$ are then obtained by writing
\begin{equation}\label{eq:pi_etas}
\Pi_M(t,s)h=\eta_M^h(t|s)=e^{(t-s) A}h+\tilde{\eta}_M^h(t|s),
\end{equation}
and it is thus necessary to understand how $\tilde{\eta}_M^h(t|s)$ depends on $h$. It is straightforward to check that $t\in[s,\infty)\mapsto \tilde{\eta}_M^h(t|s)\in H_M$ is solution to the evolution equation
\begin{equation}\label{tildeeta(t|s)}
\left\lbrace
\begin{aligned}
&\frac{d}{dt} \tilde{\eta}_M^h(t|s) = A \tilde{\eta}_M^h(t|s) + 2 B_M\bigl[X_M^x(t), \tilde{\eta}_M^h(t|s)\bigr]+ 2 B_M\bigl[X_M^x(t), e^{(t-s) A}h\bigr], \quad t\ge s,\\
&\quad \tilde{\eta}_M^h(s|s)=0.
\end{aligned}
\right.
\end{equation}
The variation of constants formula implies the following expression for $\tilde{\eta}_M^h(t|s)$: 
\begin{equation}		\label{eq:VoC_tildeeta}
\tilde{\eta}_M^h(t|s) = 2\int_s^t \Pi_M(t,r) B_M\bigl[X_M^x(r) , e^{(r-s)A}h\bigr] \,dr, \quad  \forall~t\geq s.
\end{equation}
Let us now explain the strategy for the analysis of the second order derivative $D^2 u_M(t,x). (g,h)$, which can be expressed as follows: for all $M\in \N$, $x,g,h\in H_M$, and $t\ge 0$, one has
\begin{align}\label{eq:Kol_2nd_deriv}
D^2 u_M(t,x) . (g,h) =\E\bigl[D^2\varphi(X_M^{x}(t)) . (\eta_M^g(t),\eta_M^h(t))\bigr]
+ \E\bigl[ D\varphi(X_M^{x}(t)) . (\zeta_M^{g,h}(t))\bigr],
\end{align}
where the processes $\bigl(\eta_M^g(t)\bigr)_{t\ge 0}$ and $\bigl(\eta_M^h(t)\bigr)_{t\ge 0}$ are solutions of~\eqref{eq:X_deriv_initialvalue} with initial values $g$ and $h$ respectively, and the process $t\ge 0\mapsto \zeta_M^{g,h}(t)\in H_M$ is solution to the evolution equation
\begin{equation}\label{eq:X_2nd_deriv_initialvalue}
\left\lbrace
\begin{aligned}
&\frac{d}{dt}\zeta_M^{g,h}(t)=A\zeta_M^{g,h}(t)+ 2 B_M\bigl[X_M^x(t),\zeta_M^{g,h}(t)\bigr]
+2B_M\bigl[\eta_M^g(t),\eta_M^h(t)\bigr]
,\;  t\geq 0,\\
&\zeta_M^{g,h}(0)=0.
\end{aligned}
\right.
\end{equation}
The linear operators $\Pi_M(t,s)$ play also a fundamental role for the analysis of the second order derivative. Indeed, applying again the variation of constants formula yields
 the following expression of $\zeta_M^{g,h}(t)$: 
\begin{equation}\label{eq:VOC_zeta}
 \zeta_M^{g,h}(t) = 2 \int_0^t \Pi_M(t,s) B_M\bigl[ \eta_M^g(s), \eta_M^h(s)\bigr] \,ds, \quad \forall~t\geq s.
\end{equation}

In view of the expressions~\eqref{eq:VoC_tildeeta} and~\eqref{eq:VOC_zeta} for the first and second order derivatives, we conclude that it is necessary to derive bounds on the operators $\Pi_M(t,s)$ for $0\leq s < t \leq T$. Indeed, we state and prove three auxiliary results below; these upper bounds reveal the importance of the (exponential) moment bounds and regularity properties for the solution $X_M^x(t)$ obtained in Section~\ref{sec:bounds}.

First, Lemma~\ref{lem_firstder_1} gives an upper bound for $\|\Pi_M(t,s)h\|_{L^2}$ depending on $\|(-A)^{-\alpha}h\|_{L^2}$ in the range $\alpha\in[0,\frac12)$. 
The techniques of the proof are based on classical energy inequalities. To go further and obtain estimates in the range $\alpha\in[\frac12,1)$, see Lemma~\ref{lem_firstder_2}, 
it is necessary to use the expression~\eqref{eq:VoC_tildeeta} for $\tilde{\eta}_M^h(t|s)$. Finally, Lemma~\ref{lem_firstder_3} gives upper bounds for 
$\|(-A)^{\gamma}\Pi_M(t,s)h\|_{L^2}$ with $\gamma\in[0,\frac12)$
depending on $\|(-A)^{-\alpha}h\|_{L^2}$ with $\alpha\in[0,1)$, using the mild formulation of $\eta^h_M(t|s)$. Note that all the upper bounds are given in the almost sure sense. The three auxiliary results are then combined with the (exponential) moment bounds and regularity results from Section~\ref{sec:bounds} to obtain Corollary~\ref{cor_moments_Du} below, and finally this corollary is used to prove Theorem~\ref{theo:Kol_reg}.

\begin{lemma} \label{lem_firstder_1}
Let $\alpha \in [0, \frac{1}{2})$ and  $\epsilon \in (0,\infty)$. There exists  
$C_{\alpha,\epsilon}(T) \in (0,\infty)$ such that for all $M\in \N$, all $x\in H_M$, all $h\in H_M\setminus\{0\}$ and all $0\leq s< t\leq T$, one has
 \begin{equation}	\label{upperPiM-1}
\frac{\| \Pi_M(t,s) h\|_{L^2}}{\| (-A)^{-\alpha} h \|_{L^2}} 
\leq 
 \frac{C_{\alpha,\epsilon}(T)}{(t-s)^{\alpha}}\; e^{ \epsilon\! \int_s^t \!\|\nabla  X_M^x(r)\|_{L^2}^2 \,dr} 
\Big(  \underset{r\in [s,t]}\sup~\|X_M^x(r)\|_{L^\infty}+1\Big). 
\end{equation}
\end{lemma}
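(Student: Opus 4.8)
The plan is to work pathwise, for a fixed continuous realization of $X:=X_M^x$, and to split the argument at the threshold $\alpha=\tfrac12$, which will be forced by an integrability condition appearing below (and which is exactly why the range $\alpha\in[\tfrac12,1)$ is handled separately in Lemma~\ref{lem_firstder_2}). First I record the identity underlying every energy estimate: since $B_M[y_1,y_2]=P_M\nabla(y_1y_2)$ for $y_1,y_2\in H_M$, the cancellation \eqref{eq:BsymM}--\eqref{eq:BsymM2} gives $2\langle\eta_M^h(r|s),B_M[X(r),\eta_M^h(r|s)]\rangle_{L^2}=\int_0^1(\nabla X)\,(\eta_M^h)^2$. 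Testing \eqref{eta(t|s)} against $\eta_M^h(\cdot|s)$ then yields $\tfrac{d}{dr}\|\eta_M^h(r|s)\|_{L^2}^2+2\|\nabla\eta_M^h(r|s)\|_{L^2}^2=2\int_0^1(\nabla X)(\eta_M^h)^2$, and the right-hand side is bounded via H\"older, the Gagliardo--Nirenberg inequality \eqref{eq:GN} with $p=4$, the Poincar\'e inequality \eqref{eq:poincare}, and Young's inequality by $\|\nabla\eta_M^h\|_{L^2}^2+C\|\nabla X\|_{L^2}^{4/3}\|\eta_M^h\|_{L^2}^2$. Gronwall's lemma controls both $\|\eta_M^h(t|s)\|_{L^2}^2$ and $\int_s^t\|\nabla\eta_M^h(r|s)\|_{L^2}^2\,dr$ by $e^{C\int_s^t\|\nabla X\|_{L^2}^{4/3}\,dr}\|h\|_{L^2}^2$, and the elementary Young bound $a^{4/3}\le\epsilon a^2+C_\epsilon$ upgrades the exponent to $\epsilon\int_s^t\|\nabla X\|_{L^2}^2\,dr$ at the cost of an $M$-independent constant. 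This settles $\alpha=0$ (the factor $\sup\|X\|_{L^\infty}+1\ge1$ being harmless) and, decisively, supplies the integrated gradient control used below.

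For $\alpha\in(0,\tfrac12)$ I argue by duality, in order to exploit the smallness of $\|(-A)^{-\alpha}h\|_{L^2}$. Writing $\|\eta_M^h(t|s)\|_{L^2}=\sup_{\|\psi\|_{L^2}\le1}\langle\eta_M^h(t|s),\psi\rangle_{L^2}$, I introduce, for each such $\psi$, the backward adjoint flow $\zeta$ on $[s,t]$ solving $-\tfrac{d}{dr}\zeta=A\zeta-2P_M(X\nabla\zeta)$ with $\zeta(t)=\psi$. Integrating by parts the two transport terms shows $\tfrac{d}{dr}\langle\eta_M^h(r|s),\zeta(r)\rangle_{L^2}=0$, so that $\langle\eta_M^h(t|s),\psi\rangle_{L^2}=\langle h,\zeta(s)\rangle_{L^2}=\langle(-A)^{-\alpha}h,(-A)^{\alpha}\zeta(s)\rangle_{L^2}\le\|(-A)^{-\alpha}h\|_{L^2}\,\|(-A)^{\alpha}\zeta(s)\|_{L^2}$. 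It thus remains to prove the smoothing estimate $\|(-A)^{\alpha}\zeta(s)\|_{L^2}\le C(t-s)^{-\alpha}e^{\frac{\epsilon}{2}\int_s^t\|\nabla X\|_{L^2}^2}(\sup_{[s,t]}\|X\|_{L^\infty}+1)\|\psi\|_{L^2}$ for the adjoint flow.

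To obtain it I set $\tau=t-r$, so that $\bar\zeta(\tau)=\zeta(t-\tau)$ solves $\tfrac{d}{d\tau}\bar\zeta=A\bar\zeta-2P_M(X\nabla\bar\zeta)$ with $\bar\zeta(0)=\psi$, and use its Duhamel representation. The linear term contributes $(-A)^\alpha e^{\tau A}\psi$, bounded by $C\tau^{-\alpha}\|\psi\|_{L^2}$ via Lemma~\ref{lem:A_analytic}(i), with no dependence on $X$. For the Duhamel integral I use $\|(-A)^\alpha e^{(\tau-\sigma)A}P_M(X\nabla\bar\zeta)\|_{L^2}\le C(\tau-\sigma)^{-\alpha}\|X\|_{L^\infty}\|\nabla\bar\zeta(\sigma)\|_{L^2}$ (again Lemma~\ref{lem:A_analytic}(i)) and Cauchy--Schwarz in $\sigma$: the factor $\bigl(\int_0^\tau(\tau-\sigma)^{-2\alpha}\,d\sigma\bigr)^{1/2}=C\tau^{\frac12-\alpha}$ is finite precisely because $\alpha<\tfrac12$, while $\bigl(\int_0^\tau\|\nabla\bar\zeta(\sigma)\|_{L^2}^2\,d\sigma\bigr)^{1/2}$ is controlled by the integrated gradient bound of the first paragraph, whose energy identity the adjoint flow shares verbatim. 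Since $\tau^{1/2-\alpha}\le T^{1/2}\tau^{-\alpha}$, this extracts exactly one power of $\sup\|X\|_{L^\infty}$ and one exponential factor, giving the smoothing estimate; combining it with the duality bound yields \eqref{upperPiM-1} after renaming $\epsilon$.

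The main obstacle is genuinely the fractional range $\alpha\in(0,\tfrac12)$: neither a direct energy estimate for $\eta_M^h$ nor a \emph{forward} Duhamel argument can replace $\|h\|_{L^2}$ by the smaller $\|(-A)^{-\alpha}h\|_{L^2}$, and it is the duality onto the adjoint flow that achieves this. The two delicate points there are that the \emph{single} power of $\sup\|X\|_{L^\infty}$ and the arbitrarily small $\epsilon$ in the exponential emerge only because the adjoint Duhamel term is \emph{linear} in $X$ and is estimated through the $L^2$-in-time gradient control (not a pointwise-in-time bound), and that the convergence of $\int_0^\tau(\tau-\sigma)^{-2\alpha}\,d\sigma$ is what pins the range to $\alpha<\tfrac12$, beyond which Lemma~\ref{lem_firstder_2} must take over. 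Finally one verifies that every constant (from the Sobolev and Gagliardo--Nirenberg inequalities and from Lemma~\ref{lem:A_analytic}) is independent of $M$, since $A$, $P_M$ and the powers $(-A)^\theta$ all act diagonally on $H_M$.
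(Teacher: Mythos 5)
Your proof is correct, but it takes a genuinely different route from the paper's. The paper never dualizes: it writes $\Pi_M(t,s)h=e^{(t-s)A}h+\tilde\eta_M^h(t|s)$, observes that $\tilde\eta_M^h(\cdot|s)$ solves the same linear transport equation with \emph{zero} initial datum and forcing $2B_M\bigl[X_M^x(\cdot),e^{(\cdot-s)A}h\bigr]$, and runs precisely your first-paragraph energy estimate (same Gagliardo--Nirenberg/Poincar\'e/Young splitting, same upgrade of $\|\nabla X\|_{L^2}^{4/3}$ to $\epsilon\|\nabla X\|_{L^2}^2$) on $\tilde\eta_M^h$ followed by Gr\"onwall; there the negative power of $-A$ on $h$ is converted into a time singularity through $\|e^{(r-s)A}h\|_{L^2}\le C(r-s)^{-\alpha}\|(-A)^{-\alpha}h\|_{L^2}$ inside the forcing, and $\alpha<\tfrac12$ is needed so that $\int_s^t(r-s)^{-2\alpha}\,dr<\infty$ --- the exact mirror image of your $\int_0^\tau(\tau-\sigma)^{-2\alpha}\,d\sigma$. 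You instead transfer the fractional power by duality: conservation of $\langle\eta_M^h(r|s),\zeta(r)\rangle_{L^2}$ along the backward adjoint flow (whose energy identity is indeed identical in form, since $-2\langle X\nabla\bar\zeta,\bar\zeta\rangle_{L^2}=\int_0^1(\nabla X)\bar\zeta^2$, and which is well posed as a finite-dimensional linear ODE with continuous coefficients) reduces the lemma to the smoothing bound $\|(-A)^\alpha\zeta(s)\|_{L^2}\lesssim(t-s)^{-\alpha}\|\psi\|_{L^2}$, proved via Duhamel, Lemma~\ref{lem:A_analytic}(1) and Cauchy--Schwarz in time; your accounting of the single power of $\sup_{[s,t]}\|X_M^x\|_{L^\infty}$ and of the arbitrarily small $\epsilon$ checks out. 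The two proofs use the same ingredients at the same cost; what the paper's version buys is the forward representation \eqref{eq:VoC_tildeeta} for $\tilde\eta_M^h$, which is recycled in Lemma~\ref{lem_firstder_2} to reach $\alpha\in[\tfrac12,1)$ and again in Corollary~\ref{cor_moments_Du}, so it integrates better with the rest of Section~\ref{sec:Kolmogorov}; what yours buys is a clean conceptual separation between moving $(-A)^{-\alpha}$ off $h$ (pure duality, no estimate needed) and the smoothing of a single linear flow, avoiding the auxiliary process $\tilde\eta_M^h$ altogether for this lemma.
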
 
\begin{proof}
Owing to the identity~\eqref{eq:pi_etas} and to the smoothing property stated in Lemma~\ref{lem:A_analytic}, it is sufficient to deal with $\tilde{\eta}_M^h(t|s)$ defined by~\eqref{tilde-eta}.

\noindent Note that $\tilde{\eta}^h_M(t|s) \in H_M$.
 Using standard energy equality techniques applied to the evolution equation \eqref{tildeeta(t|s)} and integrating in time, one obtains
\begin{align*}
&\|\tilde{\eta}_M^h(t|s)\|_{L^2}^2 +2\int_s^t  \langle (-A)\tilde{\eta}_M^h(r|s),\tilde{\eta}_M^h(r|s)\rangle \,dr 
\\
&\qquad =  4\int_s^t 
\Bigl( 
    \langle \tilde{\eta}^h_M(r|s), B[ X_M^x(r), \tilde{\eta}^h_M(r|s)] \rangle_{L^2} 
    + 
    \langle \tilde{\eta}^h_M(r|s), B[ X_M^x(r), e^{(r-s) A} h ] \rangle_{L^2} 
\Bigr) 
\,dr.
\end{align*}
By integration by parts, one has $\langle (-A)\tilde{\eta}_M^h(r|s),\tilde{\eta}_M^h(r|s)\rangle=\|\nabla \tilde{\eta}_M^h(r|s)\|_{L^2}^2$. Note also that owing to~\eqref{eq:Bsym} one has  $\langle x_1, B[x_2,x_1]\rangle_{L^2} = -\frac{1}{2} \langle x_2, B(x_1)\rangle_{L^2} = \frac{1}{2} \langle \nabla x_2 , x_1^2\rangle_{L^2}$ for $x_1,x_2\in H_M\subseteq W^{1,2}_0$.

On the one hand, applying the identity above, the H\"older inequality, and the Gagliardo-Nirenberg inequality~\eqref{eq:GN} (with $p=4$) and the Poincar\'e inequality~\eqref{eq:poincare} (which is applicable as $\tilde{\eta}_M^h(r|s)$ takes values in $H_M\subseteq W^{1,2}_{0}$), one obtains the following upper bounds: there exists $C\in (0,\infty)$ (independent of $x$, $h$, $T$, $r$, $s$, and $M$) such that 
\begin{align*}
\big| \langle \tilde{\eta}^h_M(r|s), B[ X_M^x(r), \tilde{\eta}^h_M(r|s)] \rangle_{L^2} \big| 
&
\leq \tfrac{1}{2}\big| \langle  \nabla X_M^x(r), \tilde{\eta}^h_M(r|s)^2 \rangle_{L^2} \big|
\leq \tfrac{1}{2} \| \nabla X_M^x(r)\|_{L^2} \| \tilde{\eta}^h_M(r|s)\|_{L^4}^2 \\
& 
\leq   C \| \nabla X_M^x(r)\|_{L^2} \| \nabla \tilde{\eta}_M^h(r|s) \|_{L^2}^{\frac12} \|\tilde{\eta}_M^h(r|s)\|_{L^2}^{\frac32}.
\end{align*}
Given an auxiliary parameter $\epsilon\in(0,\infty)$, applying twice the Young inequality, one then obtains the upper bound
\begin{align*}
\big| \langle \tilde{\eta}^h_M(r|s), B[ X_M^x(r), \tilde{\eta}^h_M(r|s)] \rangle_{L^2} \big|
&\leq \tfrac{1}{4}\|  \nabla \tilde{\eta}_M^h(r|s) \|_{L^2}^2 + \tfrac{3}{4}\bigl(C\| \nabla X_M^x(r)\|_{L^2} \|\tilde{\eta}_M^h(r|s)\|_{L^2}^{\frac32}\bigr)^{\frac43}\\
&\leq 
\tfrac{1}{4}\|  \nabla \tilde{\eta}_M^h(r|s) \|_{L^2}^2 
+ \Big( \tfrac{\epsilon}{4} \|\nabla X_M^x(r)\|_{L^2}^2 +  \tfrac{C^4}{\epsilon^2} \Big)  \|\tilde{\eta}_M^h(r|s)\|_{L^2}^2.
\end{align*}\par 
On the other hand, integrating by parts and applying the Cauchy-Schwarz and Young inequalities yields
\begin{align*}
\big| \langle \tilde{\eta}^h_M(r|s), B[ X_M^x(r), e^{(r-s)A} h] \rangle_{L^2} \big| = & \big| \langle \nabla \tilde{\eta}_M^h(r|s) , X_M^x(r) e^{(r-s)A}h \rangle_{L^2} \big| \\
\leq & \tfrac{1}{4} \|  \nabla \tilde{\eta}_M^h(r|s)\|_{L^2}^{2}  + \|X_M^x(r)\|_{L^\infty}^2 \|e^{(r-s)A} h\|_{L^2}^{2}.
\end{align*} 
Therefore, one has for all $t\ge s$
\begin{align*}
\|\tilde{\eta}_M^h(t|s)\|_{L^2}^2 
& \leq 
\int_s^t \left( 
    \big( \epsilon \| \nabla X_M^x(r)\|_{L^2}^2 +  \tfrac{ 4 C^4}{\epsilon^2} \big) \| \tilde{\eta}^h_M(r|s)\|_{L^2}^2
    +   4   \|X_M^x(r)\|_{L^\infty}^2 \| e^{(r-s) A} h \|_{L^2}^2 
\right) \,dr.
\end{align*}
Recall that $\tilde{\eta}_M^h(s|s)=0$. Appyling Gr\"onwall's lemma and the smoothing property Lemma~\ref{lem:A_analytic}(1),  for all $\alpha \in [0, \frac{1}{2})$, there exists
$C_{\alpha}\in (0,\infty)$ such that for all $s\le t\le T$ one has
\begin{align}	\label{eq:firstder_1}
\|\tilde{\eta}_M^h&(t|s)\|_{L^2}^2 \leq 4 \int_s^t \! \exp\Big(  \epsilon \! \int_{r_1}^t \!  \|\nabla X_M^x(r_2)\|_{L^2}^2  \,dr_2 
+  \tfrac{ 4 C^4}{\epsilon^2} (t- r_1) \Big) \|X_M^x(r_1)\|_{L^\infty}^2 
\| e^{(r_1-s)A} h\|_{L^2}^2 \,dr_1 \nonumber \\
&\leq  C_{\alpha} \exp\Big( \int_s^t \!\!  \epsilon \|\nabla X_M^x(r)\|_{L^2}^2  \,dr +  \tfrac{ 4 C^4}{\epsilon^2}  (t-s) \Big) \underset{r\in [s,t]}\sup~\|X_M^x(r)\|_{L^\infty}^2  \int_s^T (r-s)^{-2\alpha} \,dr \|(-A)^{-\alpha} h\|_{L^2}^2 \nonumber \\
&\leq    C_{\alpha}   T^{1-2\alpha}  \exp(  \tfrac{ 4 C^4}{\epsilon^2}T) 
\exp\Big( \int_s^t \epsilon  \|\nabla X_M^x(r)\|_{L^2}^2  \,dr \Big) \underset{r\in [s,t]}\sup~\|X_M^x(r)\|_{L^\infty}^2 \|(-A)^{-\alpha} h\|_{L^2}^2.
\end{align}
Recalling from~\eqref{eq:pi_etas} that $\Pi_M(t,s) h = \eta_M^h(t|s)=\tilde{\eta}_M^h(t|s) + e^{(t-s) A}h$ for all $t\ge s$, and applying Lemma~\ref{lem:A_analytic}(1) then yields the inequality~\eqref{upperPiM-1},  which concludes the proof of Lemma~\ref{lem_firstder_1}. 
\end{proof} 

Next, one needs to extend the range of admissible values from $\alpha\in[0,\frac12)$ to $\alpha \in [0,1)$. The limitation to $\alpha\in[0,\frac12)$ in Lemma~\ref{lem_firstder_1} is due to using energy inequalities techniques. It is overcome below using the mild formulation~\eqref{eq:VoC_tildeeta} instead. An additional auxiliary positive parameter denoted $\delta\in (0,\infty)$ is required; 
 this parameter may  be chosen arbitrarily small, however, the implied constant $C_{\alpha,\delta,\epsilon}(T)$ may blow up as $\delta$ tends to $0$.
\begin{lemma} \label{lem_firstder_2}
Let $\alpha \in [ 0, 1)$, and $\delta,\epsilon \in (0,\infty)$. There exists  $C_{\alpha,\delta,\epsilon}(T) \in (0,\infty)$ such that for all $M\in \N$, $x\in H_M$, all $h\in H_M\setminus\{0\}$ and $0\leq s < t \leq T$ one has
 \begin{align}	\label{upperPiM-2}
 \frac{\| \Pi_M(t,s)  h \|_{L^2}}{\| (-A)^{-\alpha} h \|_{L^2}}
 \leq  &\; 
 \frac{C_{\alpha,\delta,\epsilon}(T)}{(t-s)^{\alpha}} e^{ \epsilon\! \int_s^t \!\|\nabla  X_M^x(r)\|_{L^2}^2 \,dr} 
\Big( \underset{r\in [s,t]}\sup~ \|(-A)^{\frac{1}{4}+\delta}X_M^x(r)\|_{L^2}^{2}+1\Big).
\end{align}
\end{lemma}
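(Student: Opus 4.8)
The plan is to exploit the decomposition $\Pi_M(t,s)h = e^{(t-s)A}h + \tilde\eta_M^h(t|s)$ from~\eqref{eq:pi_etas}. The semigroup part is immediate: writing $e^{(t-s)A}h=(-A)^{\alpha}e^{(t-s)A}(-A)^{-\alpha}h$ and invoking the smoothing estimate Lemma~\ref{lem:A_analytic}(1) yields $\|e^{(t-s)A}h\|_{L^2}\le C_\alpha (t-s)^{-\alpha}\|(-A)^{-\alpha}h\|_{L^2}$, which already has the desired form. It therefore remains to estimate $\tilde\eta_M^h(t|s)$, and for this the energy method of Lemma~\ref{lem_firstder_1} is insufficient once $\alpha\ge\tfrac12$; instead I would start from the variation-of-constants representation~\eqref{eq:VoC_tildeeta}. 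Since the right-hand side of~\eqref{upperPiM-2} is non-decreasing in $\delta$ by~\eqref{eq:A_fracpownorminc}, it suffices to prove the estimate for all sufficiently small $\delta$, so I henceforth assume $\delta<\tfrac12$ and $\alpha+\delta<1$ (possible because $\alpha<1$).

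The core step is to bound the integrand $\|\Pi_M(t,r)B_M[X_M^x(r),e^{(r-s)A}h]\|_{L^2}$ in~\eqref{eq:VoC_tildeeta}. I fix the auxiliary exponent $\alpha'=\tfrac12-\tfrac\delta4\in(0,\tfrac12)$ and apply Lemma~\ref{lem_firstder_1} (with $\alpha$ replaced by $\alpha'$ and $s$ by $r$); this reduces matters to estimating $\|(-A)^{-\alpha'}B_M[X_M^x(r),e^{(r-s)A}h]\|_{L^2}$. Writing $w=X_M^x(r)\,e^{(r-s)A}h$ and using that $e^{(r-s)A}$ preserves $H_M$, one has $B_M[X_M^x(r),e^{(r-s)A}h]=P_M\nabla w$; since $(-A)^{-\alpha'}$ commutes with $P_M$ and $P_M$ is a contraction, it suffices to bound $\|(-A)^{-\alpha'}\nabla w\|_{L^2}$. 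Lemma~\ref{lem:deriv_bdd_fracdomains} gives $\|(-A)^{-\alpha'}\nabla w\|_{L^2}\le\|(-A)^{\frac12-\alpha'}w\|_{L^2}=\|(-A)^{\delta/4}w\|_{L^2}$, and Lemma~\ref{lem:fracpow_B_est} with $\gamma=\tfrac\delta4\in(0,\tfrac18)$ then yields $\|(-A)^{\delta/4}w\|_{L^2}\le C_\delta\|(-A)^{\frac14+\delta}X_M^x(r)\|_{L^2}\,\|(-A)^{\delta}e^{(r-s)A}h\|_{L^2}$. Finally Lemma~\ref{lem:A_analytic}(1) bounds the last factor by $C(r-s)^{-(\alpha+\delta)}\|(-A)^{-\alpha}h\|_{L^2}$.

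Combining these, and enlarging $\int_r^t$ to $\int_s^t$ in the exponent while passing to suprema over $[s,t]$, the integrand is controlled by
\[
\frac{C_{\alpha,\delta,\epsilon}(T)}{(t-r)^{\alpha'}(r-s)^{\alpha+\delta}}\, e^{\epsilon\int_s^t\|\nabla X_M^x(\rho)\|_{L^2}^2\,d\rho}\Big(\sup_{\rho\in[s,t]}\|X_M^x(\rho)\|_{L^\infty}+1\Big)\|(-A)^{\frac14+\delta}X_M^x(r)\|_{L^2}\,\|(-A)^{-\alpha}h\|_{L^2}.
\]
Integrating in $r$ produces the Beta integral $\int_s^t(t-r)^{-\alpha'}(r-s)^{-(\alpha+\delta)}\,dr=(t-s)^{1-\alpha'-\alpha-\delta}B(1-\alpha',1-\alpha-\delta)$, finite because $\alpha'<1$ and $\alpha+\delta<1$; since $1-\alpha'-\alpha-\delta=\tfrac12-\tfrac{3\delta}4-\alpha\ge-\alpha$, this is bounded by $C_T(t-s)^{-\alpha}$.

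It remains to convert the product of moment factors into the form $\sup_{[s,t]}\|(-A)^{\frac14+\delta}X_M^x\|_{L^2}^2+1$: applying~\eqref{eq:Linfty_DA_bound} to replace $\|X_M^x\|_{L^\infty}$ and then Young's inequality gives $\big(\sup\|(-A)^{\frac14+\delta}X_M^x\|_{L^2}+1\big)\sup\|(-A)^{\frac14+\delta}X_M^x\|_{L^2}\le 2\big(\sup\|(-A)^{\frac14+\delta}X_M^x\|_{L^2}^2+1\big)$. Adding back the semigroup contribution (whose moment factor is $1\le\sup\|(-A)^{\frac14+\delta}X_M^x\|_{L^2}^2+1$) yields~\eqref{upperPiM-2}. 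The delicate point is the simultaneous calibration of $\alpha'$, $\gamma$ and the smoothing order applied to $e^{(r-s)A}h$: the single choice $\alpha'=\tfrac12-\tfrac\delta4$ must at once lie in the energy-estimate range $[0,\tfrac12)$ of Lemma~\ref{lem_firstder_1}, produce the admissible order $\gamma=\tfrac\delta4<\tfrac18$ for Lemma~\ref{lem:fracpow_B_est}, match the prescribed regularity exponent $\tfrac14+\delta$ on $X_M^x$, and keep both endpoints of the Beta integral integrable uniformly over $\alpha\in[0,1)$.
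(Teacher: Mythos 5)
Your proposal is correct and follows essentially the same route as the paper's proof: the decomposition~\eqref{eq:pi_etas}, the variation-of-constants formula~\eqref{eq:VoC_tildeeta}, Lemma~\ref{lem_firstder_1} applied with exponent $\tfrac12-\tfrac{\delta}{4}$, then Lemmas~\ref{lem:deriv_bdd_fracdomains} and~\ref{lem:fracpow_B_est} (with $\gamma=\tfrac{\delta}{4}$), the smoothing estimate of Lemma~\ref{lem:A_analytic}, and the same Beta-integral computation giving $(t-s)^{-\alpha}$. The only cosmetic differences are that the paper first reduces to $\alpha\in[\tfrac12,1)$ while you treat all $\alpha\in[0,1)$ uniformly, and that you spell out the $P_M$-commutation and the final product-to-square conversion which the paper leaves implicit.
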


\begin{proof}
In view of Lemma~\ref{lem_firstder_1} (and of the Sobolev inequality~\eqref{eq:Linfty_DA_bound}), we only need to consider the case $\alpha \in [\frac{1}{2},1)$. 
Without loss of generality in this proof assume that $\delta\in(0,1-\alpha)$. Let also $M\in\N$, $T\in(0,\infty)$ and $x,h\in H_M$ be fixed.

Using the expression~\eqref{eq:VoC_tildeeta} of $\tilde{\eta}_M^h(t|s)$ and applying Lemma~\ref{lem_firstder_1} 
(with $\alpha=\frac12-\frac{\delta}{4}$), there exists $C_{\delta,\epsilon}(T)\in (0,\infty)$  such that for all $t\in[s,T]$ one has
\begin{align}\label{eq:tilde_eta_est_2}
\| \tilde{\eta}^h_M(t|s)\|_{L^2} 
& \leq 
2 \int_s^t \| \Pi_M(t,r) B_M[X_M^x(r), e^{(r-s)A} h] \|_{L^2} \,dr 
\notag \\
& 
\leq C_{\delta,\epsilon}(T) 
e^{ \epsilon\! \int_s^t \!\|\nabla  X_M^x(r)\|_{L^2}^2 \,dr} \Big( \underset{r\in [s,t]}\sup~\|X_M^x(r)\|_{L^\infty} +1\Big)
\notag \\ &
\quad \times 
\int_{s}^{t}  (t-r)^{-\frac{1}{2}+\frac{\delta}{4}}
\| (-A)^{-\frac{1}{2}+\frac{\delta}{4}} B[X_M^x(r), e^{(r-s)A}h] \|_{L^2} \,dr .
\end{align}
Since  $\alpha\ge \frac12$ and  $\delta<1-\alpha$, one has $\delta<\frac{1}{2}$; therefore Lemmas~\ref{lem:deriv_bdd_fracdomains} 
(with $\alpha=\frac12-\frac{\delta}{4}$) and~\ref{lem:fracpow_B_est} (with $\gamma=\frac{\delta}{4}$) imply that 
there exists  $C_{\delta}\in(0,\infty)$ such that for all $r\in[s,t]$ one has
\begin{align*}
\| (-A)^{-\frac{1}{2}+\frac{\delta}{4}} B[X_M^x(r), e^{(r-s)A}h] \|_{L^2} &= \| (-A)^{-\frac{1}{2}+\frac{\delta}{4}}  \nabla \big(X_M^x(r) e^{(r-s)A}h\big)  \|_{L^2} \\
& \leq  \| (-A)^{\frac{\delta}{4}} \big(X_M^x(r) e^{(r-s)A}h\big)  \|_{L^2} \\
&\leq C_{\delta} \|(-A)^{\frac{1}{4}+\delta} X_M^x(r)\|_{L^{2}} \|(-A)^{\delta} e^{(r-s) A}h \|_{L^{2}}.
\end{align*}
Plugging this upper bound into~\eqref{eq:tilde_eta_est_2}, and applying the smoothing property from Lemma \ref{lem:A_analytic}(1) and the Sobolev inequality~\eqref{eq:Linfty_DA_bound},
one deduces that there exists $C_{\alpha,\delta,\epsilon}(T)\in(0,\infty)$ such that for all $t\in[s,T]$ one has
\begin{align*}
\| \tilde{\eta}^h_M(t|s ) \|_{L^2} 
&
\leq C_{\alpha,\delta,\epsilon}(T) e^{ \epsilon\! \int_s^t \!\|\nabla  X_M^x(r)\|_{L^2}^2 \,dr}  
\big(\underset{r\in [s,t]}\sup~\|(-A)^{\frac{1}{4}+\delta } X_M^x(r)\|_{L^2}^{2} +1\big)
\\ & \quad 
\times \int_s^t (t-r)^{-\frac{1}{2} +\frac{\delta}{4}} (r-s)^{-\delta  -\alpha} \,dr \;   \|(-A)^{-\alpha} h\|_{L^2} .
\end{align*}
Owing to the condition $\delta +\alpha <1$ imposed at the beginning of the proof on the parameter $\delta$,  the change of variable $r=s+(t-s)\theta$ in the integral above implies that there exists $C_{\alpha,\delta}(T)\in (0,\infty)$ such that 
\[
\int_s^t\!\! (t-r)^{-\frac{1}{2} +\frac{\delta}{4}} (r-s)^{-\delta -\alpha} \,dr = (t-s)^{\frac{1}{2} - \frac{3\delta}{4} -\alpha } \int_0^1 \!\!\theta^{-(\alpha + \delta)} (1-\theta)^{-\frac{1}{2} +\frac{\delta}{4}}  d\theta 
\leq  C_{\alpha,\delta}(T) (t-s)^{-\alpha},
\]
where the last upper bound uses the inequality $(t-s)^{\frac12-\frac{3\delta}{4}}\le T^{\frac12-\frac{3\delta}{4}}$ which is deduced  from  the condition $\delta <1-\alpha\le \frac{1}{2}$.

Recalling that  from~\eqref{eq:pi_etas}  one has $\Pi_M(t,s) h = \tilde{\eta}_M^h(t|s) + e^{(t-s)A} h$ for all $t\ge s$, and applying  Lemma \ref{lem:A_analytic}(1), 
this yields the inequality~\eqref{upperPiM-2}, which concludes the proof of Lemma~\ref{lem_firstder_2}.
\end{proof}

It remains to state and prove the last auxiliary result on the linear operators $\Pi_M(t,s)$.

\begin{lemma} \label{lem_firstder_3}
Let $\alpha \in [0, 1)$, $\gamma \in [0,\frac{1}{2})$, and $\delta, \epsilon \in (0,\infty)$. There exists  $C_{\alpha,\gamma,\delta,\epsilon}(T) \in (0,\infty)$ such that for all $M\in N$, $x\in H_M$, $h\in H_M\setminus \{0\}$,
and  $0\leq s < t\leq T$,  one has
 \begin{align}	\label{upperPiM-3}
\frac{\| (-A)^\gamma \Pi_M(t,s) h \|_{L^2}}{\| (-A)^{-\alpha} h \|_{L^2}} & 
\leq  \frac{C_{\alpha,\gamma,\delta,\epsilon}(T)}{(t-s)^{\alpha+\gamma}}e^{ \epsilon\! \int_s^t \!\|\nabla  X_M^x(r)\|_{L^2}^2 \,dr} \Big( \underset{r\in [s,t]}\sup~ \|(-A)^{\frac{1}{4}+\delta}X_M^x(r)\|_{L^2}^{3}+1\Big).
\end{align}
\end{lemma}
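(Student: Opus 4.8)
The plan is to start from the mild formulation~\eqref{eq:eta_mild} of $\eta_M^h(t|s)=\Pi_M(t,s)h$, apply $(-A)^\gamma$, and combine the parabolic smoothing of the semigroup with the first-order bound already established in Lemma~\ref{lem_firstder_2}. Applying $(-A)^\gamma$ to~\eqref{eq:eta_mild} gives
\[
(-A)^\gamma \Pi_M(t,s)h = (-A)^\gamma e^{(t-s)A}h + 2\int_s^t (-A)^\gamma e^{(t-r)A}B_M\bigl[X_M^x(r),\eta_M^h(r|s)\bigr]\,dr.
\]
For the first term I would write $(-A)^\gamma e^{(t-s)A}h = (-A)^{\gamma+\alpha}e^{(t-s)A}(-A)^{-\alpha}h$ and invoke Lemma~\ref{lem:A_analytic}(1), obtaining a bound of order $(t-s)^{-(\alpha+\gamma)}\|(-A)^{-\alpha}h\|_{L^2}$, which already has the form of the right-hand side of~\eqref{upperPiM-3}.

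The core of the argument is the integral term. Since $X_M^x(r),\eta_M^h(r|s)\in H_M$, the definition~\eqref{eq:defB_M} and the identity $B[x_1,x_2]=\nabla(x_1x_2)$ give $B_M[X_M^x(r),\eta_M^h(r|s)]=P_M\nabla\bigl(X_M^x(r)\,\eta_M^h(r|s)\bigr)$. Setting $g=X_M^x(r)\,\eta_M^h(r|s)$ and using that $(-A)^\gamma$, $e^{(t-r)A}$ and $P_M$ commute (all being functions of $A$) with $P_M$ a contraction, I would reduce to $\|(-A)^\gamma e^{(t-r)A}\nabla g\|_{L^2}$ and then extract exactly half a derivative: writing $\nabla g=(-A)^{\frac12}\bigl[(-A)^{-\frac12}\nabla g\bigr]$ and applying Lemma~\ref{lem:deriv_bdd_fracdomains} (with $\alpha=\tfrac12$) to get $\|(-A)^{-\frac12}\nabla g\|_{L^2}\le\|g\|_{L^2}$, Lemma~\ref{lem:A_analytic}(1) yields
\[
\bigl\|(-A)^\gamma e^{(t-r)A}B_M[X_M^x(r),\eta_M^h(r|s)]\bigr\|_{L^2}\le C\,(t-r)^{-(\gamma+\frac12)}\,\|X_M^x(r)\|_{L^\infty}\,\|\eta_M^h(r|s)\|_{L^2},
\]
after the split $\|g\|_{L^2}\le\|X_M^x(r)\|_{L^\infty}\|\eta_M^h(r|s)\|_{L^2}$. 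Here the hypothesis $\gamma<\tfrac12$ is used precisely to ensure $\gamma+\tfrac12<1$, so that the kernel is integrable at $r=t$.

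At this point I would insert the bound of Lemma~\ref{lem_firstder_2} for $\|\eta_M^h(r|s)\|_{L^2}=\|\Pi_M(r,s)h\|_{L^2}$ together with the Sobolev embedding~\eqref{eq:Linfty_DA_bound}, $\|X_M^x(r)\|_{L^\infty}\le C_\delta\|(-A)^{\frac14+\delta}X_M^x(r)\|_{L^2}$. Pulling the exponential factor and the supremum out of the integral (using $\int_s^r\le\int_s^t$ and $[s,r]\subseteq[s,t]$), the product of the power-$1$ factor coming from the $L^\infty$ bound with the power-$2$ factor coming from Lemma~\ref{lem_firstder_2} accumulates into the power-$3$ weight $\sup_{[s,t]}\|(-A)^{\frac14+\delta}X_M^x\|_{L^2}^3+1$ of~\eqref{upperPiM-3}. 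The remaining temporal integral $\int_s^t (t-r)^{-(\gamma+\frac12)}(r-s)^{-\alpha}\,dr$ is of Beta type; it converges since $\gamma+\tfrac12<1$ and $\alpha<1$, and equals $C_{\alpha,\gamma}(t-s)^{\frac12-\gamma-\alpha}$. Bounding $(t-s)^{\frac12}\le T^{\frac12}$ absorbs the surplus half-power into the constant and leaves the factor $(t-s)^{-(\alpha+\gamma)}$. Combining with the first term, which is dominated by the same expression since the polynomial and exponential factors are $\ge 1$, completes the proof.

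I expect the main obstacle to be the bookkeeping in this final step: the estimate is self-referential, feeding the already-proved Lemma~\ref{lem_firstder_2} back through the mild formulation, so one must check both that the two temporal singularities combine into a convergent Beta integral—this is exactly where $\gamma<\tfrac12$ is decisive—and that the polynomial weights in $\|(-A)^{\frac14+\delta}X_M^x\|_{L^2}$ accumulate to the claimed power $3$ and no higher.
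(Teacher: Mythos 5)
Your proposal is correct and follows essentially the same route as the paper's proof: the mild formulation~\eqref{eq:eta_mild} hit with $(-A)^\gamma$, the factor $(t-r)^{-(\gamma+\frac12)}$ obtained via Lemma~\ref{lem:A_analytic}(1) together with Lemma~\ref{lem:deriv_bdd_fracdomains} (extracting $(-A)^{-\frac12}\nabla$), the insertion of Lemma~\ref{lem_firstder_2} and the Sobolev bound~\eqref{eq:Linfty_DA_bound} to accumulate the cubic weight, and the Beta integral $\int_s^t(t-r)^{-(\frac12+\gamma)}(r-s)^{-\alpha}\,dr\le C\,T^{\frac12}(t-s)^{-(\alpha+\gamma)}$ requiring exactly $\gamma<\tfrac12$ and $\alpha<1$. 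Nothing is missing; your bookkeeping of the exponential factor and of the power $3$ matches the paper's.
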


\begin{proof}
Let $M\in\N$, $T\in(0,\infty)$ and $x,h\in H_M$ be fixed.

Recall that $\Pi_M(t,s)h = \eta_M^h(t|s)$,  and that the mild formulation for the solution $\eta_M^h(t|s)$ to~\eqref{eta(t|s)}  is given  by~\eqref{eq:eta_mild}.

Therefore,  applying Lemma~\ref{lem:A_analytic}(1) (with $\alpha=\frac12+\gamma$) and Lemma~\ref{lem:deriv_bdd_fracdomains}, then Lemma~\ref{lem_firstder_2} and the Sobolev 
inequality~\eqref{eq:Linfty_DA_bound},  one deduces that there exists $C_{\alpha+\gamma}, C_{\gamma,\delta,\epsilon}(T), C_{\alpha,\gamma,\delta,\epsilon}(T) \in (0,\infty)$ 
 such that for all $t\ge s$ one has
\begin{align*}
\| (-A)^\gamma \eta_M^h(t|s) \|_{L^2} 
& \leq 
\| (-A)^\gamma e^{(t-s) A} h\|_{L^2} + C_{\gamma,\delta,\epsilon}(T)  \int_s^t (t-r)^{-(\frac{1}{2} +\gamma)} \|X_M^x(r)\|_{L^\infty} \|\eta_M^h(r|s)\|_{L^2}\, dr\\
& \leq   C_{\alpha+\gamma} (t-s)^{-(\alpha+\gamma)} \|(-A)^{-\alpha} h\|_{L^2} \\
&\quad + C_{\alpha,\gamma,\delta,\epsilon}(T) \int_s^t 
(t-r)^{-(\frac{1}{2}+\gamma)} (r-s)^{-\alpha}\,  dr  \\
&\qquad \times
e^{ \epsilon\! \int_s^t \!\|\nabla  X_M^x(r)\|_{L^2}^2 \,dr}
\Big( \underset{r\in [s,t]}\sup~\|(-A)^{\frac{1}{4}+\delta }X_M^x(r)\|_{L^2}^3 +1\Big)
\| (-A)^{-\alpha}h\|_{L^2}.
\end{align*}
Owing to the conditions $\frac{1}{2}+\gamma <1$ and $\alpha <1$, performing the change of variable $r=s+(t-s)\theta$ in the integral above, one has
\[
\int_s^t (t-r)^{-(\frac{1}{2}+\gamma)} (r-s)^{-\alpha}\,  dr \leq T^{\frac{1}{2}}  (t-s)^{-\alpha- \gamma }\int_0^1 (1-\theta)^{-(\frac{1}{2}+\gamma)} \theta^{-\alpha} \,  d\theta.
\]
One thus obtains the inequality~\eqref{upperPiM-3}, which concludes the proof of Lemma~\ref{lem_firstder_3}.
\end{proof}

We are now in a position to state and
  prove moment bounds for $\eta_M^h(t)$ and $\zeta_M^{g,h}(t)$ in Corollary~\ref{cor_moments_Du} below , using the properties of the linear operators $\Pi_M(t,s)$ obtained above. 
 Corollary~\ref{cor_moments_Du} is the final ingredient needed for the proof of Theorem~\ref{theo:Kol_reg}.

\begin{cor}	\label{cor_moments_Du}
\begin{enumerate}
\item\label{cor_moments_Du1}  Let $\alpha \in [0,1)$, $\gamma \in [0, \frac{1}{2})$,  $\delta, \epsilon \in (0,\infty)$; then there exists  
$C_{p,\alpha,\gamma,\delta,\epsilon}(T,Q) \in (0,\infty) $  such that for all $M\in\N$, $x,h\in H_M$, and $t\in [0,T]$, one has
\begin{equation}	\label{mom_D1}
  \bigl(\E\bigl[\|(-A)^{\gamma} \eta_M^h(t)\|_{L^2}^p \bigr]\bigr)^{\frac{1}{p}} 
  \leq C_{p,\alpha,\gamma,\delta,\epsilon}(T,Q) t^{  - (\alpha + \gamma)} e^{\epsilon\|  x\|_{L^2}^2} 
  \bigl( 1+\|(-A)^{\frac{1}{4}+\delta} x\|_{L^2}^{6}\bigr)
 \|(-A)^{-\alpha}h\|_{L^2},
 \end{equation}
 where we recall that $t\mapsto \eta_M^{h}(t)$ is the solution 
to~\eqref{eq:X_deriv_initialvalue}.
 \item\label{cor_moments_Du2} Let $\alpha, \beta \in [0,1)$ be such that $\alpha+\beta <1$, and let $\delta , \epsilon \in (0,\infty)$.   
 There exists  $ C_{p,\alpha,\beta,\delta,\epsilon, \mu }(T,Q)\in (0,\infty) $ such that for all 
 $M\in\N$, $x,g,h\in H_M$, $t\in [0,T]$, one has
 \begin{equation}	\label{mom_D2}
 \begin{aligned}
 & \bigl(\E\bigl[ \| \zeta_M^{g,h}(t)\|_{L^2}^p \bigr]\bigr)^{\frac{1}{p}} 
 \\ & \qquad \leq C_{p,\alpha,\beta,\delta,\epsilon}(T,Q) t^{-(\alpha + \beta)} e^{\epsilon \|x\|_{L^2}^2}
  \bigl( 1+\|(-A)^{\frac{1}{4}+\delta} x\|_{L^2}^{16}\bigr) \|(-A)^{-\alpha}g\|_{L^2} \| (-A)^{-\beta}h\|_{L^2}, 
 \end{aligned}
 \end{equation}
 where we recall that $t\mapsto \zeta_M^{g,h}(t)$ is the solution  to~\eqref{eq:X_2nd_deriv_initialvalue}.
 \end{enumerate}
\end{cor}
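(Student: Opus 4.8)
The plan is to prove the two moment bounds in Corollary~\ref{cor_moments_Du} by combining the almost-sure operator bounds from Lemmas~\ref{lem_firstder_2} and~\ref{lem_firstder_3} with the moment and exponential-moment bounds from Section~\ref{sec:bounds}, using H\"older's inequality to split the various factors. For part~\eqref{cor_moments_Du1}, I would start from the identity $\eta_M^h(t)=\Pi_M(t,0)h$ and apply Lemma~\ref{lem_firstder_3} (with $s=0$) to bound $\|(-A)^{\gamma}\eta_M^h(t)\|_{L^2}$ pointwise in $\omega$ by
\[
\frac{C_{\alpha,\gamma,\delta,\epsilon}(T)}{t^{\alpha+\gamma}}\;
e^{\epsilon\int_0^t\|\nabla X_M^x(r)\|_{L^2}^2\,dr}\,
\Bigl(\sup_{r\in[0,t]}\|(-A)^{\frac14+\delta}X_M^x(r)\|_{L^2}^3+1\Bigr)
\|(-A)^{-\alpha}h\|_{L^2}.
\]
Taking $L^p(\Omega)$-norms, the deterministic prefactor and the $h$-dependence come out, and it remains to bound the $L^p$-norm of the product of the exponential factor and the supremum factor. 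I would apply H\"older's inequality with exponents $p_1,p_2$ (say $p_1=p_2=2$, or more generally conjugate exponents) to separate these two random quantities. The exponential factor is controlled by the exponential moment bound~\eqref{expmoments_ranIV} in Lemma~\ref{lem:expest}\eqref{it:expmomentbounds}, after first replacing the initial condition $x\in H_M$ by a deterministic value so that~\eqref{expmoments_detIV} applies and yields a factor $\exp(C\epsilon\|x\|_{L^2}^2)$ (up to rescaling $\epsilon$ and absorbing the $e^{\beta T\Tr(Q)}$ constant); this is the source of the $e^{\epsilon\|x\|_{L^2}^2}$ factor in the statement. The supremum of $\|(-A)^{\frac14+\delta}X_M^x(r)\|_{L^2}$ raised to a suitable power is controlled by the space-time regularity bound~\eqref{eq:XM_moment_mixed_reg} in Lemma~\ref{lem:X_sup_moments}, choosing $\lambda=\frac14+\delta$ and $\alpha'\in(\frac14,\frac12)$ with $\lambda<\alpha'$ (which forces $\delta$ small); since the right-hand side of~\eqref{eq:XM_moment_mixed_reg} involves $\|(-A)^{\alpha'}x\|_{L^2}$ and $\|x\|_{L^2}$ with deterministic $x\in H_M$, and since $\alpha'<\frac14+\delta$ can be arranged so that $\|(-A)^{\alpha'}x\|_{L^2}\le\|(-A)^{\frac14+\delta}x\|_{L^2}$ by~\eqref{eq:A_fracpownorminc}, this produces exactly the polynomial factor $1+\|(-A)^{\frac14+\delta}x\|_{L^2}^6$. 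Tracking the powers carefully (the cube from Lemma~\ref{lem_firstder_3} becomes a sixth power after the H\"older split doubles the exponent) gives the claimed bound~\eqref{mom_D1}.

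For part~\eqref{cor_moments_Du2}, I would begin from the variation-of-constants formula~\eqref{eq:VOC_zeta}, namely
\[
\zeta_M^{g,h}(t)=2\int_0^t\Pi_M(t,s)B_M\bigl[\eta_M^g(s),\eta_M^h(s)\bigr]\,ds,
\]
and bound the integrand almost surely. I would apply Lemma~\ref{lem_firstder_2} to the operator $\Pi_M(t,s)$ acting on $B_M[\eta_M^g(s),\eta_M^h(s)]$; to do so I need to control $\|(-A)^{-\kappa}B_M[\eta_M^g(s),\eta_M^h(s)]\|_{L^2}$ for a suitable $\kappa$, which I would estimate using the structure $B_M[\cdot,\cdot]=P_M\nabla(\cdot\,\cdot)$, the identity~\eqref{eq:PMnabla}, Lemma~\ref{lem:deriv_bdd_fracdomains} to absorb one derivative into a half power of $-A$, and Lemma~\ref{lem:fracpow_B_est} to handle the product of the two factors $\eta_M^g(s)$ and $\eta_M^h(s)$. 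This bounds the $L^2$-norm of the bilinear term by a product of fractional-power norms of $\eta_M^g(s)$ and $\eta_M^h(s)$, each of which is controlled almost surely by Lemma~\ref{lem_firstder_3}. Assembling these, I obtain a pointwise (in $\omega$) bound for $\|\zeta_M^{g,h}(t)\|_{L^2}$ as a time integral of the form $\int_0^t(t-s)^{-a}s^{-b}\,ds$ times the $g$- and $h$-dependent factors $\|(-A)^{-\alpha}g\|_{L^2}\|(-A)^{-\beta}h\|_{L^2}$, an exponential factor $e^{\epsilon\int_0^t\|\nabla X_M^x\|^2}$ (now appearing with a larger multiple of $\epsilon$, to be reabsorbed), and a supremum of $\|(-A)^{\frac14+\delta}X_M^x\|_{L^2}$ to a higher power (roughly the sixth power, from two applications of Lemma~\ref{lem_firstder_3} each contributing a cube). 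The exponents $a,b$ must satisfy $a,b<1$ and $a+b=\alpha+\beta+(\text{something small})<1$, which is exactly where the hypothesis $\alpha+\beta<1$ and the freedom to choose $\delta,\gamma$ arbitrarily small are used to keep the Beta-type integral finite and proportional to $t^{-(\alpha+\beta)}$. Taking $L^p(\Omega)$-norms and applying H\"older and the Section~\ref{sec:bounds} bounds as before yields~\eqref{mom_D2}, with the exponent $16$ in the polynomial factor arising from the accumulated powers (two $\eta$-factors each giving a sixth power, plus the $\Pi_M$ prefactor giving a square, after the H\"older doubling).

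The main obstacle I anticipate is bookkeeping the interplay of the auxiliary parameters and the exponents. Concretely, two subtleties require care. First, the choice of the intermediate smoothing parameters: to apply Lemma~\ref{lem_firstder_2} inside~\eqref{eq:VOC_zeta} I need the bilinear term measured in a negative fractional power of $-A$, and I must verify that Lemmas~\ref{lem:deriv_bdd_fracdomains} and~\ref{lem:fracpow_B_est} can be chained so that the required exponent $\frac14+4\gamma$ on one factor and $4\gamma$ on the other in Lemma~\ref{lem:fracpow_B_est} are both dominated by $\frac14+\delta$ for an appropriate small $\gamma$, so that the regularity bound~\eqref{eq:XM_moment_mixed_reg} (which only controls $(-A)^{\frac14+\delta}$-norms) actually applies. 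Second, and most delicately, the exponential factors must be collected and reabsorbed: each application of Lemma~\ref{lem_firstder_2} or~\ref{lem_firstder_3} contributes a factor $e^{\epsilon\int_0^t\|\nabla X_M^x(r)\|_{L^2}^2\,dr}$, and in part~\eqref{cor_moments_Du2} there may be several such factors (one from $\Pi_M(t,s)$ and one from each of $\eta_M^g$, $\eta_M^h$) whose product is an exponential with a larger constant; after the H\"older split this constant is further multiplied by the conjugate exponent. I would handle this by first fixing the final target $\epsilon$, then working backwards to choose the $\epsilon$ appearing in the lemmas small enough that, after multiplication by the H\"older exponent and the number of factors, the total remains below the threshold $\beta<\frac{\gamma_0}{1+2\gamma_0\|Q\|_{\mathcal L(L^2)}}$ (or $\frac{1}{2\|Q\|}$ in the deterministic-$x$ case) required for~\eqref{expmoments_ranIV}. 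Everything else—the time integrals, the polynomial moment bounds, and the separation of the $g,h$ dependence—is routine once this parameter management is set up correctly.
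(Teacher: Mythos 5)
Your treatment of part \ref{cor_moments_Du1} is essentially the paper's proof: the identity $\eta_M^h(t)=\Pi_M(t,0)h$, the almost-sure bound of Lemma~\ref{lem_firstder_3}, a Cauchy--Schwarz split, the exponential moment bound~\eqref{expmoments_detIV} (which applies directly, since $x\in H_M$ is deterministic), and Lemma~\ref{lem:X_sup_moments} for the supremum factor. One slip in your parameter choice: you require both $\lambda=\tfrac14+\delta<\alpha'$ (needed to invoke Lemma~\ref{lem:X_sup_moments}, whose hypothesis is $\lambda+\gamma<\alpha'$) and $\alpha'<\tfrac14+\delta$ (so that $\|(-A)^{\alpha'}x\|_{L^2}\le\|(-A)^{\frac14+\delta}x\|_{L^2}$ via~\eqref{eq:A_fracpownorminc}), which is contradictory. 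The fix is what the paper does: run Lemma~\ref{lem_firstder_3} with $\tfrac{\delta}{2}$ in place of $\delta$, then apply Lemma~\ref{lem:X_sup_moments} with $\lambda=\tfrac14+\tfrac{\delta}{2}$ and $\alpha'=\tfrac14+\delta$.

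Part \ref{cor_moments_Du2}, however, contains a genuine gap: your estimate of the bilinear term cannot reach the full range $\alpha+\beta<1$. Since $B[\eta^g_M,\eta^h_M]=\nabla(\eta^g_M\eta^h_M)$, chaining Lemma~\ref{lem:deriv_bdd_fracdomains} (which absorbs $\nabla$ into $(-A)^{\frac12}$ but only allows exponents $\kappa\le\tfrac12$) with Lemma~\ref{lem:fracpow_B_est} (which requires $\gamma\in(0,\tfrac18)$ and costs $\tfrac14+4\gamma$ and $4\gamma$ regularity on the two factors) gives, with $\gamma'=\tfrac12-\kappa\in(0,\tfrac18)$,
\begin{equation*}
\|(-A)^{-\kappa}B_M[\eta_M^g(s),\eta_M^h(s)]\|_{L^2}
\le C\,\|(-A)^{\frac14+4\gamma'}\eta_M^g(s)\|_{L^2}\,\|(-A)^{4\gamma'}\eta_M^h(s)\|_{L^2},
\end{equation*}
and Lemma~\ref{lem_firstder_3} then produces the singularity $s^{-(\alpha+\frac14+4\gamma')}\,s^{-(\beta+4\gamma')}$. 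The resulting time integral $\int_0^t(t-s)^{-\kappa}\,s^{-(\alpha+\beta+\frac14+8\gamma')}\,ds$ is finite only if $\alpha+\beta+\tfrac14+8\gamma'<1$, i.e.\ only for $\alpha+\beta<\tfrac34$. This quarter-derivative loss is structural in your toolkit: the $L^2$-based gradient absorption caps the $(t-s)$-exponent at $\tfrac12$, and Lemma~\ref{lem:fracpow_B_est} cannot produce a \emph{negative} power of a product, so no reshuffling of parameters recovers the range $\alpha+\beta\in[\tfrac34,1)$. The paper's proof avoids this by measuring the bilinear term in $L^1$: the dual Sobolev inequality~\eqref{eq:Linfty_DA_bound-dual} gives $\|(-A)^{-(\frac34+\lambda)}B_M[\cdot,\cdot]\|_{L^2}\le C\|(-A)^{-\frac12}B[\cdot,\cdot]\|_{L^1}$, and then Lemma~\ref{lem:deriv_bdd_sqrtA} (gradient absorption in $L^q$ with $q=\tfrac{2}{1+4\lambda}$ close to $1$), H\"older's inequality and the Sobolev embedding bound this by $C\|(-A)^{\frac18-\frac{\lambda}{2}}\eta_M^g(s)\|_{L^2}\|(-A)^{\frac18-\frac{\lambda}{2}}\eta_M^h(s)\|_{L^2}$. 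With that split, Lemma~\ref{lem_firstder_2} (applied with exponent $\tfrac34+\lambda$) carries the $(t-s)$-singularity, the $s$-singularity is only $\alpha+\beta+\tfrac14-\lambda$, and choosing $\lambda\in(\alpha+\beta-\tfrac34,\tfrac14)$ — possible precisely because $\alpha+\beta<1$ — makes both exponents less than $1$ and yields $t^{-(\alpha+\beta)}$. This $L^1$-duality step, entirely absent from your proposal (you never invoke~\eqref{eq:Linfty_DA_bound-dual} or Lemma~\ref{lem:deriv_bdd_sqrtA}), is the key missing idea.
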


\begin{proof} 
\textbf{Proof of \ref{cor_moments_Du1}.}
Let us first prove the inequality~\eqref{mom_D1}. Recall that $\eta_M^h(t)=\Pi_M(t,0)h$. Therefore, the inequality~\eqref{upperPiM-3} from Lemma~\ref{lem_firstder_3} (applied with $\epsilon$ replaced by $\epsilon/2$) and the Cauchy-Schwarz inequality imply,  given $\epsilon \in (0, \infty)$, the existence of $C_{\alpha,\gamma,\delta,\epsilon}(T) \in (0,\infty)$ such that for all $M\in\N$, $x,h\in H_M$, $t\in [0,T]$.
\begin{align*}
 \bigl( \E\bigl[ \|(-A)^{\gamma} \eta_M^h(t)\|_{L^2}^p \bigr]\bigr)^{\frac{1}{p}} 
 & \leq  C_{\alpha,\beta,\delta,\epsilon}(T)  t^{ -(\alpha + \gamma)} \|(-A)^{-\alpha}h\|_{L^2} 
 \Bigl( \E \Bigl[ e^{ \epsilon p\int_0^t \!\|\nabla  X_M^x(r)\|_{L^2}^2 \,dr}\Bigr]\Bigr)^{\frac{1}{2p}} \\
 &\quad \times  
 \Bigl( \E\Bigl[1+ \underset{s\in [0,T]}\sup~ \|(-A)^{\frac{1}{4}+ \frac{\delta}{2}}  X_M^x(s) \|^{ 6 p }_{L^2} \Bigr]\Bigr)^{\frac{1}{2p}}.
\end{align*}

The exponential moment bounds~\eqref{expmoments_detIV} from Lemma~\ref{lem:expest}  (applied with $\lambda=\frac14+\frac{\delta}{2}$ and $\gamma=\frac{\delta}{2}$) imply that 
for all $\epsilon \in (0,\frac{1}{4p \|Q\|_{\mathcal{L}(L^2)}})$  there exists $C_{\epsilon,p}(T,Q) \in (0,\infty)$ such that
\[
\underset{M\in\N}\sup~ \Bigl( \E \Bigl[ e^{\frac{\epsilon}{2} p\int_0^t \!\|\nabla  X_M^x(r)\|_{L^2}^2 \,dr }\Bigr]\Bigr)^{\frac{1}{2p}}  \leq C_{\epsilon,p}(T,Q) 
e^{\epsilon \|x\|_{L^2}^2}. 
\]

The regularity result given by inequality~\eqref{eq:XM_moment_mixed_reg} from Lemma~\ref{lem:X_sup_moments} implies that there exists 
$C_{\delta,p, \mu}(T,Q)\in (0,\infty)$ such that
\begin{align*}
 \underset{M\in\N}\sup~
    \Bigl( 
        \E \Bigl[ 
            \underset{s\in [0,T]}\sup~ 
                \|(-A)^{\frac{1}{4}+\frac{\delta}{2}} X_M^x(s) \|_{L^2}^{6p}
        \Bigr] 
    \Bigr)^{\frac{1}{2 p}}
  &
  \leq C_{\delta,p}(T,Q) \bigl(1+ \|(-A)^{\frac{1}{4}+\delta } x\|_{L^2}^{6}  + \|x\|_{L^2}^{18}  \bigr).
\end{align*} 
 Since $\|x\|_{L^2}^{18} \leq C_{ \epsilon} e^{\frac{\epsilon}{2} \|x\|_{L^2}^2}$, combining the upper bounds above then yields the inequality~\eqref{mom_D1}.
\smallskip

\textbf{Proof of \ref{cor_moments_Du2}.} Let us now prove the inequality~\eqref{mom_D2}. Recall the expression of $\zeta_M^{g,h}(t)$ given by~\eqref{eq:VOC_zeta}.

Let 
 $\lambda \in \bigl( 0\vee   (\alpha + \beta - \frac{3}{4}), \frac{1}{4}\bigr)$
denote an auxiliary parameter. 
 Applying the inequality~\eqref{upperPiM-2} in Lemma~\ref{lem_firstder_2} (with $\alpha=\frac34+\lambda$) 
implies that there exists $C_{\lambda,\delta,\epsilon}(T)\in (0,\infty)$ such that, for all $M\in \N$, $x,h\in H_M$, and $t\in [0,T]$, one has
\begin{align}\label{eq:first_zetaM_est}
\| \zeta_M^{g,h}(t) \|_{L^2} 
& \leq 
C_{\lambda,\delta,\epsilon}(T) e^{ \frac{\epsilon}{2}\! \int_0^t \!\|\nabla  X_M^x(r)\|_{L^2}^2 \,dr}  
\Big( \underset{r\in [0,t]}\sup~ \|(-A)^{\frac{1}{4} + \frac{\delta}{2}} X_M^x(r)\|_{L^2}^2+1\Big) \nonumber \\ & \quad 
\times \int_0^t (t-s)^{-(\frac{3}{4}+\lambda)} 
\| (-A)^{-(\frac{3}{4}+\lambda)} 
B_M[\eta^g_M(s), \eta_M^h(s)] \|_{L^2} \,ds.
\end{align}
Applying the inequality~\eqref{eq:Linfty_DA_bound-dual}, one obtains the upper bound
\begin{align*}
 \| (-A)^{-(\frac{3}{4}+\lambda)} B_M[\eta^g_M(s), \eta_M^h(s)] \|_{L^2}
 & \leq 
 \| (-A)^{-(\frac{3}{4}+\lambda)} B[\eta^g_M(s), \eta_M^h(s)] \|_{L^2}
\\ & \leq C_{\lambda} \| (-A)^{-\frac{1}{2}} B[\eta^g_M(s) \eta_M^h(s)] \|_{L^{1}}.
\end{align*}
Set $q = \frac{2}{1+4\lambda}$ and observe that by definition of $\lambda$ we have $q>1$. Applying the inequality~\eqref{eq:deriv_bdd_sqrtA} from Lemma~\ref{lem:deriv_bdd_sqrtA} and 
H\"older's inequality implies that there exists  $C_{q}\in(0,\infty)$ such that one has the upper bounds
\begin{align*}
\| (-A)^{-\frac{1}{2}} B[\eta^g_M(s), \eta_M^h(s)] \|_{L^{1}}
& \leq
\| (-A)^{-\frac{1}{2}} B[\eta^g_M(s), \eta_M^h(s)] \|_{L^{q}} \\
& \leq 
C_{q}
\| \eta^g_M(s) \eta_M^h(s) \|_{L^{q}}
\\ & 
\leq C_{q} \| \eta^{g}_M(s) \|_{L^{2q}}\| \eta^{h}_M(s) \|_{L^{2q}}.
\end{align*}
From the Sobolev inequality~\eqref{eq:SobolevI} and Proposition~\ref{prop:DA_Sobolev_equiv}, one then obtains the upper bound
\begin{align*}\| (-A)^{-\frac{1}{2}} B[\eta^g_M(s) ,\eta_M^h(s)] \|_{L^{1}}
& \leq 
C_{q} \| (-A)^{\frac{1}{4}-\frac{1}{4q}}\eta^{g}_M(s) \|_{L^{2}}
\| (-A)^{\frac{1}{4}-\frac{1}{4q}}\eta^{h}_M(s) \|_{L^{2}}.
\end{align*}
Applying 
the inequality~\eqref{upperPiM-3} from Lemma~\ref{lem_firstder_3} with $\gamma=\frac{1}{4}-\frac{1}{4q}=\frac{1}{8}-\frac{\lambda}{2}$, 
one deduces that there exists $C_{\alpha,\beta,\lambda,\delta,\epsilon}(T)\in(0,\infty)$ such that for all $M\in \N$, all $x,g,h\in H_M$, and all $s\in [0,T]$
\begin{align*}
\| (-A)^{-\frac{1}{2}} B[\eta^g_M(s), \eta_M^h(s)]\|_{L^{1}}
& \leq 
C_{\alpha,\beta,\lambda,\delta,\epsilon}(T) s^{  -(\alpha +\beta + \frac{1}{4}-\lambda)}  e^{ \frac{\epsilon}{2}\! \int_0^s \!\|\nabla  X_M^x(r)\|_{L^2}^2 \,dr}
\\ & \quad \times 
\Big( \underset{r\in [0,s]}\sup~ \|(-A)^{\frac{1}{4} + \frac{\delta}{2} } X_M^x(r)\|_{L^2}^{6} +1\Big) \| (-A)^{-\alpha} h \|_{L^2}\| (-A)^{-\beta} g \|_{L^2}.
\end{align*}
Plugging the above inequality in~\eqref{eq:first_zetaM_est} and recalling that $\lambda$ is an auxiliary parameter chosen such that $\alpha+\beta+\frac14-\lambda<1$, one thus obtains that there exists a $C_{\alpha,\beta,\delta,\epsilon}(T)\in (0,\infty)$ such that
\begin{align*}
\| \zeta_M^{g,h}(t) \|_{L^2} 
& \leq 
C_{\alpha,\beta,\delta,\epsilon}(T) e^{ \epsilon\! \int_0^t \!\|\nabla  X_M^x(r)\|_{L^2}^2 \,dr}  
\Big( \underset{r\in [0,t]}\sup~\|(-A)^{\frac{1}{4} + \frac{\delta}{2} } X_M^x(r)\|_{L^2}^{8} +1\Big) \\ & \quad 
\times \int_0^t (t-s)^{ - (\frac{3}{4} + \lambda)  } s^{ -(\alpha+\beta + \frac{1}{4} - \lambda) } \,ds
\| (-A)^{-\alpha} h \|_{L^2}\| (-A)^{-\beta} g \|_{L^2}
\\ & \leq 
 C_{\alpha,\beta,\delta,\epsilon}(T) t^{ -(\alpha+\beta)} e^{ \epsilon\! \int_0^t \!\|\nabla  X_M^x(r)\|_{L^2}^2 \,dr}  
\Big( \underset{r\in [s,t]}\sup~ \|(-A)^{\frac{1}{4} + \frac{\delta}{2} } X_M^x(r)\|_{L^2}^{8} +1\Big) \\ & \quad 
\times 
\| (-A)^{-\alpha} h \|_{L^2}\| (-A)^{-\beta} g \|_{L^2}.
\end{align*} 
In order to obtain the inequality~\eqref{mom_D2}, it thus remains to proceed as in the proof of the inequality~\eqref{mom_D1}, using H\"older's inequality, the  
exponential moment bounds \eqref{expmoments_detIV}  from Lemma~\ref{lem:expest} and the regularity properties from Lemma~\ref{lem:X_sup_moments}. 
The details are omitted. This concludes the proof of the inequality~\eqref{cor_moments_Du2}.

The proof of Corollary~\ref{cor_moments_Du} is thus completed.
\end{proof}

\begin{proof}[Proof of Theorem~\ref{theo:Kol_reg}]
Recall that the mapping $\varphi:L^2\to\R$ is assumed to be twice continuously differentiable with bounded first and second order derivatives. Moreover, the expressions of $D u_M(t,x). (h)$ 
and $D^2 u_M(t,x) .(g,h)$ are given by~\eqref{eq:Kol_deriv} and~\eqref{eq:Kol_2nd_deriv} respectively.

To prove the inequality~\eqref{eq:Kol_deriv_est}, it suffices to note that owing to~\eqref{eq:Kol_deriv} one has
\[
|D u_M(t,x) . (h)|\le C(\varphi)\E[\|\eta_M^h(t)\|_{L^2}]
\]
and to apply the inequality~\eqref{mom_D1} from Corollary~\ref{cor_moments_Du}.

To prove the inequality~\eqref{eq:Kol_2nd_deriv_est}, observe that owing to~\eqref{eq:Kol_2nd_deriv} one has
\[
|D^2 u_M(t,x).(g,h)|\le C(\varphi)\Bigl(\bigl(\E[\|\eta_M^g(t)\|_{L^2}^2]\bigr)^{\frac12} \bigl(\E[\|\eta_M^h(t)\|_{L^2}^2]\bigr)^{\frac12} +\E[\|\zeta_M^{g,h}(t)\|_{L^2}]\Bigr).
\]
Applying the inequalities~\eqref{mom_D1} and~\eqref{mom_D2} from Corollary~\ref{cor_moments_Du} then yields the inequality~\eqref{eq:Kol_2nd_deriv_est}.

The proof of Theorem~\ref{theo:Kol_reg} is thus completed.
\end{proof}

\begin{rem}
    The degrees $1$, $2$ and $3$ of polynomial dependence with respect to $X_M^x$ in the inequalities~\eqref{upperPiM-1} from Lemma~\ref{lem_firstder_1},~\eqref{upperPiM-2} from Lemma~\ref{lem_firstder_2} and~\eqref{upperPiM-3} from Lemma~\ref{lem_firstder_3} may not be optimal. Similarly, the degrees $6$ and $16$ for the dependence with respect to $\|(-A)^{\frac{1}{4}+\delta } x\|_{L^2}$ appearing in Corollary~\ref{cor_moments_Du} and finally in Theorem~\ref{theo:Kol_reg} may also not be optimal. See also Remark~\ref{rem:notoptimal}.
\end{rem}

\section{Weak error estimates for the spectral Galerkin approximation}\label{weak-rate}

This section is devoted to the rigorous statement of the main result of this article: we provide weak error estimates for the spectral Galerkin approximation.

\begin{theo}\label{thm:weakrate}
Assume that there exists $\gamma_0\in(0,\infty)$ such that $ \E [\exp(\gamma_0 \| X_0\|_{L^2}^2 ) ]< \infty$, and that there exist $p\in (32,\infty)$
and $\delta_0\in(0,\infty)$ such that $\E\bigl[\| (-A)^{\frac14 + \delta_0} X_0 \|_{L^2}^{p}\bigr]<\infty$.

Let $X$ be the solution to the Burgers equation~\eqref{eq:Burgers}, and let $X_N$, $N\in \N$, be the associated Galerkin approximations~\eqref{eq:GalsolBurgers}. Let $\varphi\in C^2(L^2,\R)$ have bounded first and second derivatives. For all $\alpha\in (0,1 \wedge (\frac{3}{4}+\delta_0))$ 
there exists $C_{\alpha,\gamma_0,\delta_0,p}(T,Q,\varphi)\in (0,\infty)$ such that for all $N\in\N$ one has
\begin{equation}
\begin{aligned}
&    \left|\E[ \varphi(X(T))] - \E[\varphi(X_N(T))]\right|
\\ & \qquad    \leq 
    C_{\alpha,\gamma_0,\delta_0,p}(T,Q,\varphi) N^{-2\alpha}\left(1+\E\bigl[\exp\bigl(\gamma_0\|X_0\|_{L^2}^2\bigr)\bigr] \right)^2\left(1+\E\bigl[\|-A)^{\frac14+\delta_0}X_0\|_{L^2}^{p}\bigr]\right).
\end{aligned}
\end{equation}
\end{theo}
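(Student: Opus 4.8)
The plan is to run a Kolmogorov/It\^o argument built on the Galerkin Kolmogorov functions $u_M$ from \eqref{eq:Kolmogorov}, whose regularity, \emph{uniformly in $M$}, is exactly what Theorem~\ref{theo:Kol_reg} supplies. Fix $N$ and let $M\ge N$. Since $X_N(t)\in H_N\subseteq H_M$, I would apply It\^o's formula to $t\mapsto u_M(T-t,X_N(t))$, using that $u_M$ solves the level-$M$ Kolmogorov equation \eqref{eq:Kolmogorov_eqn} while $X_N$ solves the level-$N$ equation \eqref{eq:GalsolBurgers}. The second-order term and the $Ax$-part of the drift cancel against $-\partial_t u_M$, the terminal value is \emph{exact} ($u_M(0,\cdot)=\varphi$ on $H_M$), and, writing $\E[\varphi(X_M(T))]=\E[u_M(T,P_MX_0)]$ and using that the stochastic integral has zero mean, one is left with
\begin{align*}
\E[\varphi(X_M(T))]-\E[\varphi(X_N(T))]
&=\underbrace{\E\bigl[u_M(T,P_MX_0)-u_M(T,P_NX_0)\bigr]}_{\mathrm{(I)}}\\
&\quad+\underbrace{\E\!\int_0^T\! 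Du_M(T-t,X_N(t)).\bigl((P_M-P_N)B(X_N(t))\bigr)\,dt}_{\mathrm{(II)}}\\
&\quad+\underbrace{\tfrac12\E\!\int_0^T\!\sum_{k\in\N}q_k\bigl[D^2u_M(T-t,X_N(t)).((P_Me_k)^{\otimes2})-D^2u_M(T-t,X_N(t)).((P_Ne_k)^{\otimes2})\bigr]dt}_{\mathrm{(III)}}.
\end{align*}
I would bound $\mathrm{(I)},\mathrm{(II)},\mathrm{(III)}$ by $CN^{-2\alpha}$ uniformly in $M\ge N$, and then let $M\to\infty$, using the strong convergence \eqref{eq:strong_conv_Galerkin} to identify the limit of the left-hand side with $\E[\varphi(X(T))]-\E[\varphi(X_N(T))]$.

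For the initial error $\mathrm{(I)}$ I would write $u_M(T,P_MX_0)-u_M(T,P_NX_0)=\int_0^1 Du_M(T,\xi_r).((P_M-P_N)X_0)\,dr$ with $\xi_r=P_NX_0+r(P_M-P_N)X_0$, and bound the integrand via \eqref{eq:Kol_deriv_est}. As $t=T$ is fixed the factor $t^{-\alpha}$ is harmless, and since $(-A)^{-\alpha}$ commutes with the spectral projections, $\|(-A)^{-\alpha}(P_M-P_N)X_0\|_{L^2}\le\|(-A)^{-\alpha}(I-P_N)X_0\|_{L^2}\le CN^{-2\alpha}\|X_0\|_{L^2}$, which yields $\mathrm{(I)}=O(N^{-2\alpha})$ for all $\alpha\in[0,1)$. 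For the noise-covariance mismatch $\mathrm{(III)}$ I would telescope each summand through the intermediate term $D^2u_M(T-t,X_N).(P_Ne_k,P_Me_k)$, so that every resulting term carries exactly one factor $(P_M-P_N)e_k$, and apply \eqref{eq:Kol_2nd_deriv_est} placing the negative power $(-A)^{-\beta}$ on that factor. A Cauchy--Schwarz in $k$ together with the Hilbert--Schmidt/trace-class bound $\sum_k q_k\|(-A)^{-\beta}(I-P_N)e_k\|_{L^2}^2=\|(I-P_N)(-A)^{-\beta}\sqrt{Q}\|_{\mathcal{L}_2(L^2)}^2\le CN^{-4\beta}\Tr(Q)$ produces a factor $N^{-2\beta}$; choosing $\beta=\alpha$ and the second exponent arbitrarily small keeps $\alpha+\beta<1$ and gives $\mathrm{(III)}=O(N^{-2\alpha})$ for $\alpha\in[0,1)$.

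The heart of the argument — and the term that caps the rate at $\tfrac34+\delta_0$ — is the drift mismatch $\mathrm{(II)}$, where $(P_M-P_N)B(X_N)=(P_M-P_N)\nabla(X_N^2)$. I would apply \eqref{eq:Kol_deriv_est} with a negative power $(-A)^{-a}$, $a\in(0,1)$, and use $\|(-A)^{-a}(P_M-P_N)z\|_{L^2}\le\|(-A)^{-a}(I-P_N)z\|_{L^2}$ to reduce to
\[
\bigl|Du_M(T-t,X_N).((P_M-P_N)\nabla(X_N^2))\bigr|\le C_a(T-t)^{-a}e^{\epsilon\|X_N\|_{L^2}^2}\bigl(1+\|(-A)^{\frac14+\delta}X_N\|_{L^2}^6\bigr)\bigl\|(-A)^{-a}(I-P_N)\nabla(X_N^2)\bigr\|_{L^2}.
\]
The crucial spatial estimate is that, because $D((-A)^{\frac14+\delta_0})=W^{\frac12+2\delta_0,2}_0$ embeds into $C([0,1])$ and is therefore a Banach algebra (the fractional analogue of \eqref{eq:H1_algebra}), one has $X_N^2\in D((-A)^{\frac14+\delta_0})$ with $\|(-A)^{\frac14+\delta_0}(X_N^2)\|_{L^2}\le C\|(-A)^{\frac14+\delta_0}X_N\|_{L^2}^2$. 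Combining this with the boundedness of $(-A)^{-\frac12}\nabla$ (Lemma~\ref{lem:deriv_bdd_fracdomains}) and with $\|(I-P_N)(-A)^{-s}\|_{\mathcal{L}(L^2)}\le CN^{-2s}$, I expect
\[
\bigl\|(-A)^{-a}(I-P_N)\nabla(X_N^2)\bigr\|_{L^2}\le CN^{-2(a-\frac14+\delta_0)}\|(-A)^{\frac14+\delta_0}X_N\|_{L^2}^2.
\]
Choosing $a=\alpha+\tfrac14-\delta_0$ turns this into $N^{-2\alpha}$, and the constraints $a<1$ (needed both for $\int_0^T(T-t)^{-a}\,dt<\infty$ and for the validity of \eqref{eq:Kol_deriv_est}) and $a>0$ give precisely $\alpha<\tfrac34+\delta_0$; intersecting with the range $\alpha<1$ from $\mathrm{(I)}$ and $\mathrm{(III)}$ gives $\alpha<1\wedge(\tfrac34+\delta_0)$.

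It then remains to take expectations of these pointwise bounds. Each carries the weight $e^{\epsilon\|X_N(t)\|_{L^2}^2}$ and a polynomial weight in $\|(-A)^{\frac14+\delta}X_N(t)\|_{L^2}$, of degree $8$ in $\mathrm{(II)}$ (degree $6$ from \eqref{eq:Kol_deriv_est} and $2$ from $X_N^2$) and of degree $16$ in $\mathrm{(III)}$ (from \eqref{eq:Kol_2nd_deriv_est}). I would separate the two weights by H\"older's inequality, bounding the exponential factor uniformly in $M,N$ by the exponential moment estimate \eqref{expmoments_ranIV} (for $\epsilon$ small enough, using $\E[\exp(\gamma_0\|X_0\|_{L^2}^2)]<\infty$), and the polynomial factor uniformly in $M,N$ by the regularity estimate \eqref{eq:XM_moment_mixed_reg} (applied with $\lambda=\tfrac14+\delta<\tfrac14+\delta_0$). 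Since \eqref{eq:XM_moment_mixed_reg} controls the $q$-th moment of $\|(-A)^{\lambda}X_N\|_{L^2}$ by the $2q$-th moment of $\|(-A)^{\frac14+\delta_0}X_0\|_{L^2}$, the degree-$16$ weight in $\mathrm{(III)}$ is exactly what forces the hypothesis $p>32$. The main obstacle throughout is the spatial estimate in $\mathrm{(II)}$: extracting the full power $N^{-2\alpha}$ with $\alpha$ up to $\tfrac34+\delta_0$ (rather than merely $\alpha<\tfrac12$) from the \emph{single} projection error $(I-P_N)\nabla(X_N^2)$ is precisely where the doubling of the strong rate is realized, and it relies on measuring the nonlinearity in the negative norm $\|(-A)^{-a}\cdot\|_{L^2}$ with $a$ close to $1$ while retaining the algebra regularity of $X_N^2$ — which is the reason Theorem~\ref{theo:Kol_reg} is needed in the sharp form with the first-order exponent allowed up to $1$.
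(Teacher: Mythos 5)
Your proposal is correct and, at the structural level, it coincides with the paper's own proof: the same It\^o/Kolmogorov decomposition of $\E[\varphi(X_M(T))]-\E[\varphi(X_N(T))]$ into an initial-value projection error, a drift (nonlinearity) mismatch, and a noise-covariance mismatch; the same passage to the limit $M\to\infty$ via strong convergence; the same use of Theorem~\ref{theo:Kol_reg} combined with the exponential-moment and regularity bounds of Section~\ref{sec:bounds} (packaged in the paper as Lemma~\ref{lem:Psi}); and the same bookkeeping producing $\alpha<1$ from your terms $\mathrm{(I)}$ and $\mathrm{(III)}$, $\alpha<\frac34+\delta_0$ from $\mathrm{(II)}$, and $p>32$ from the degree-$16$ weight in \eqref{eq:Kol_2nd_deriv_est}. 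The one genuinely different step is the one you correctly identify as the heart of the matter, the drift mismatch $\mathrm{(II)}$. The paper applies \eqref{eq:Kol_deriv_est} with exponent $\alpha+\beta$, $\beta=\frac14-\delta$, extracts $N^{-2(\alpha+\beta)}$ from the projection difference via \eqref{eq:negpowAPNbound}, measures $B(X_N)$ in plain $L^2$, and then buys back the factor $N^{2\beta}$ through the inverse inequality \eqref{eq:fracpowAPNbound} on $H_N$, trading $\|(-A)^{\frac12}X_N\|_{L^2}$ for $N^{2\beta}\|(-A)^{\frac14+\delta}X_N\|_{L^2}$. You instead apply \eqref{eq:Kol_deriv_est} with exponent $a=\alpha+\frac14-\delta$ and measure $\nabla(X_N^2)$ in the negative norm of order $\frac14-\delta$, using that $D((-A)^{\frac14+\delta})=W^{\frac12+2\delta,2}_0$ is a Banach algebra, so that the projection difference directly yields $N^{-2\alpha}$ with no inverse inequality. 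The two mechanisms are equivalent in outcome (same constraint $a=\alpha+\beta<1$, same weight $\Psi_{\delta,\epsilon,8}$), but note what each requires: your route needs the fractional algebra property, which is standard in dimension one for smoothness above $\frac12$ (it follows from the embedding \eqref{eq:SobolevII} and the product rule for the Slobodeckij seminorm, together with Proposition~\ref{prop:DA_Sobolev_equiv}) but is \emph{not} established in the paper --- Lemma~\ref{lem:fracpow_B_est} only covers products at order $\gamma<\frac18$ --- so you must supply a proof of it; the paper's route stays entirely within its own toolkit.

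Three technical points you gloss over, none fatal. First, the identity $\E[\varphi(X_M(T))]=\E[u_M(T,P_MX_0)]$ is not a tautology when $X_0$ is random: $u_M$ is defined through deterministic initial data, and identifying $u_M(T,X_M(0))$ with $\E[\varphi(X_M(T))\,|\,X_M(0)]$ requires an approximation/conditioning argument; this is exactly Lemma~\ref{lemma:Kolmogorov_randomIV}, whose proof occupies Appendix~\ref{app:Kolmogorov}. Second, in $\mathrm{(I)}$ you also need the weight in \eqref{eq:Kol_deriv_est} to be controlled at the intermediate points $\xi_r$, i.e.\ $\|\xi_r\|_{L^2}\le\|X_0\|_{L^2}$ and $\|(-A)^{\frac14+\delta}\xi_r\|_{L^2}\le\|(-A)^{\frac14+\delta_0}X_0\|_{L^2}$ (the paper's inequality \eqref{eq:simpleAPNest}); this is immediate because $P_N,P_M$ are spectral projections, but it has to be invoked. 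Third, the algebra estimate in $\mathrm{(II)}$ should be run with an auxiliary $\delta\in(0,\delta_0)$ chosen so that $\alpha<\frac34+\delta$, rather than with $\delta_0$ itself, since the moment bound \eqref{eq:XM_moment_mixed_reg} only controls $\|(-A)^{\frac14+\delta}X_N\|_{L^2}$ for $\delta<\delta_0$; you observe this correctly in the expectation step, so it is only a matter of making the key spatial estimate consistent with it. Likewise, since $X_0$ is random, the limit identification $\lim_{M\to\infty}\E[\varphi(X_M(T))]=\E[\varphi(X(T))]$ should cite the strong-convergence estimate \eqref{eq:strongGalConvRates} of Remark~\ref{rem:weak_vs_strong} rather than \eqref{eq:strong_conv_Galerkin}, which is stated for deterministic initial values.
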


\begin{rem}\label{rem:weak_vs_strong}
Before providing the proof of Theorem~\ref{thm:weakrate}, let us compare the weak error estimates with strong error bounds obtained previously in the literature.
We recall from~\cite[Equation (107)]{HutzenthalerJentzen:2020} that for all $p\in [8,\infty)$,  all $r,q\in (0,\infty)$ such that 
$\frac{4}{p}+\frac{1}{q}=\frac{1}{r}$ and all $\alpha\in (\frac{1}{2},\infty)$,  there exists $C_{\alpha,\gamma_0,p,q}(T,Q)\in(0,\infty)$ such that 
\begin{equation*}\label{eq:strongGalConvRates_pre}
\begin{aligned}
\sup_{M\in \N} \left( 
M^{\alpha} \sup_{t\in [0,T]} \left(\E [\| X(t) - X_M(t) \|_{L^2}^r ] \right)^{\frac{1}{r}}
\right)
&\leq 
C_{\alpha,\gamma_0,p,q}(T,Q) \left( \E[\exp(\gamma_0 \| X_0 \|_{L^2}^2)]\right)^{\frac{1}{q}} 
\\ & \quad \times 
\left( 
1 \wedge \liminf_{M\rightarrow \infty} \sup_{t\in [0,T]} 
\left( \E[\| (-A)^{\frac{\alpha}{2}}X_M(t) \|_{L^2}^{\frac{p}{2}}]\right)^{\frac{4}{p}}
\right).
\end{aligned}
\end{equation*}
In view of Lemma~\ref{lem:X_sup_moments} this implies that under the assumptions of Theorem~\ref{thm:weakrate} one has for all $r,q\in (0,\infty)$ satisfying $\frac{4}{p}+\frac{1}{q}=\frac{1}{r}$ and all $\alpha\in (0,1 \wedge (\frac12+2\delta_0))$ that there exists a $C_{\alpha,\gamma_0,\delta_0,p,q}(T,Q)\in (0,\infty)$ such that for all $N\in\N$ one has
\begin{equation}\label{eq:strongGalConvRates}
\begin{aligned}
&
\sup_{t\in [0,T]} \left(\E [\| X(t) - X_N(t) \|_{L^2}^r ] \right)^{\frac{1}{r}}\le 
C_{\alpha,\gamma_0,\delta_0,p,q}(T,Q,X_0)
N^{-\alpha},
\end{aligned}
\end{equation}
where
\begin{align*}
&C_{\alpha,\gamma_0,\delta_0,p,q}(T,Q,X_0)\\
& \quad \le C_{\alpha,\gamma_0,\delta_0,p,q}(T,Q)\left( \E[\exp(\gamma_0 \| X_0 \|_{L^2}^2)]\right)^{\frac{1}{q}}\left( 1 + \E\bigl[\| (-A)^{\frac14+\delta_0} X_0 \|_{L^2}^{p}\bigr] +  \E\bigl[\| X_0 \|_{L^2}^{ 3p}\bigr]\right)^{\frac{4}{p}}<\infty.
\end{align*}
Note that $ \E [\exp(\gamma_0 \| X_0\|_{L^2}^2 ) ]< \infty$ implies $\E\bigl[\| X_0 \|_{L^2}^{ 3p}\bigr]<\infty$.

Observe that when $\delta_0\ge \frac14$, one may take arbitrary $\alpha\in(0,1)$, meaning that the weak order $2\alpha$ is twice the strong order $\alpha$. For $\delta_0 \in [0,\frac{1}{4})$ the weak rate we obtain is \emph{more than twice} the strong rate obtained in~\cite{HutzenthalerJentzen:2020}. However, we have no indication that the (weak or strong) rates obtained in the regime where $\delta_0 \in (0,\frac{1}{4})$ are optimal. In particular,~\cite[Corollary 7.5]{ConusJentzenKurniawan:2019} proves that a weak rate $2$ is optimal for Galerkin approximations of certain linear parabolic equations. Moverover, results in~\cite{MullerGronbachRitter:2007} imply that a strong rate $1$ is optimal for Galerkin approximations of certain semilinear parabolic equations. We do not know of any lower bounds for approximations of the Burgers' equation (weak or strong), let alone that it is known whether they depend on the regularity of the initial value. 
\end{rem}

In order to prove Theorem~\ref{thm:weakrate}, two additional auxiliary lemmas are required. First, Lemma~\ref{lemma:Kolmogorov_randomIV} gives a property on the solutions $u_M$ of the Kolmogorov equation, in order to extend the definition~\eqref{eq:Kolmogorov} of $u_M(t,x)$ for deterministic $x\in H_M$ to random initial values $x=X_M(0)$. The result is classical but requires some care to deal with the nonlinearity in a rigorous way. A proof is provided in Appendix~\ref{app:Kolmogorov}.

\begin{lemma}\label{lemma:Kolmogorov_randomIV}
Assume that there exists $p\in [4,\infty)$ such that $\E[\| X_0 \|_{L^2}^p]<\infty$. Let $M\in\N$ and let $\varphi \colon L^2 \rightarrow \R$ be twice continuously differentiable with bounded first and second derivatives, and let $u_M$ be defined by~\eqref{eq:Kolmogorov}. Then one has
\begin{equation}\label{eq:Kolmogorov_randomIV}
u_M(t,X_M(0)) = \E[ \varphi( X_M(t) ) | X_M(0) ] \quad \text{a.s.}
\end{equation}
for all $t\in [0,T]$, $M\in \N$.
\end{lemma}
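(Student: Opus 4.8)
The plan is to establish the apparently stronger identity $\E[\varphi(X_M(t))\mid\cF_0]=u_M(t,X_M(0))$ a.s.\ and then to deduce~\eqref{eq:Kolmogorov_randomIV} from it. Indeed, since $X_0$ is $\cF_0$-measurable we have $\sigma(X_M(0))\subseteq\cF_0$, and since $u_M(t,\cdot)$ is continuous the random variable $u_M(t,X_M(0))$ is $\sigma(X_M(0))$-measurable; hence the tower property yields
\[
\E[\varphi(X_M(t))\mid X_M(0)]=\E\bigl[\E[\varphi(X_M(t))\mid\cF_0]\mid X_M(0)\bigr]=\E[u_M(t,X_M(0))\mid X_M(0)]=u_M(t,X_M(0)).
\]
Throughout I use that $\varphi$, having bounded first derivative, is globally Lipschitz, so $|\varphi(y)|\le|\varphi(0)|+C\|y\|_{L^2}$; together with the moment bound $\E[\sup_{t\in[0,T]}\|X_M(t)\|_{L^2}^p]<\infty$ of Lemma~\ref{lem:expest}(i) (applicable because $\E[\|X_0\|_{L^2}^p]<\infty$ with $p\ge4$), this makes every conditional expectation below well defined and shows, via $\E[\|X_M^x(t)\|_{L^2}]\le C(1+\|x\|_{L^2})$, that $|u_M(t,x)|\le C(1+\|x\|_{L^2})$.

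The conceptual heart of the argument is the Markov (independence) structure of~\eqref{eq:GalsolBurgers}. On the finite-dimensional space $H_M$ the coefficients $A_M$ and $B_M$ are smooth with at most quadratic growth, hence locally Lipschitz; combined with the global a priori bounds of Lemma~\ref{lem:expest}(i), which rule out explosion, this gives pathwise uniqueness and a global strong solution for every $\cF_0$-measurable $H_M$-valued initial datum. In particular there is a measurable map $\Psi_t\colon H_M\times C([0,t];L^2)\to H_M$ with $X_M^x(t)=\Psi_t\bigl(x,(W^Q(r))_{r\in[0,t]}\bigr)$ a.s.\ for every deterministic $x\in H_M$ and $X_M(t)=\Psi_t\bigl(X_M(0),(W^Q(r))_{r\in[0,t]}\bigr)$ a.s. Since $(W^{(k)})_{k\in\N}$ are $(\cF_t)$-Brownian motions, the path $(W^Q(r))_{r\in[0,t]}$ is independent of $\cF_0$, while $X_M(0)=P_MX_0$ is $\cF_0$-measurable; the substitution (``freezing'') lemma for conditional expectations then gives $\E[\varphi(X_M(t))\mid\cF_0]=g(X_M(0))$ with $g(x)=\E[\varphi(\Psi_t(x,(W^Q(r))_{r\in[0,t]}))]=\E[\varphi(X_M^x(t))]=u_M(t,x)$, as desired.

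To make this rigorous while respecting the merely local Lipschitz character of $B_M$, I would argue by approximation of the initial datum, first reducing to bounded $\varphi$ by replacing $\varphi$ by a truncation $\varphi_N$ (globally Lipschitz, bounded, with $\varphi_N\to\varphi$ pointwise and $|\varphi_N|\le|\varphi|$) and passing $N\to\infty$ at the end via dominated convergence, using that $|\varphi(X_M(t))|$ and $C(1+\|X_M(0)\|_{L^2})$ are integrable. For a simple $\cF_0$-measurable initial value $Z=\sum_{j=1}^n x_j\mathbf 1_{A_j}$ ($x_j\in H_M$, $A_j\in\cF_0$ disjoint), pathwise uniqueness forces the solution started at $Z$ to coincide with $X_M^{x_j}$ on $A_j$, and since each $X_M^{x_j}(t)$ depends only on the noise increments on $[0,t]$ (hence is independent of $\cF_0$) we obtain the exact identity $\E[\varphi_N(X_M^Z(t))\mid\cF_0]=\sum_j\mathbf 1_{A_j}u_M^{(N)}(t,x_j)=u_M^{(N)}(t,Z)$, where $u_M^{(N)}(t,x)=\E[\varphi_N(X_M^x(t))]$. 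Choosing simple $Z_n\to X_M(0)$ in $L^p(\Omega;H_M)$, the right-hand side converges in $L^1(\Omega)$ to $u_M^{(N)}(t,X_M(0))$ by continuity of $u_M^{(N)}(t,\cdot)$ and the uniform integrability following from its linear growth bound; for the left-hand side, boundedness of $\varphi_N$ and convergence in probability $X_M^{Z_n}(t)\to X_M(t)$ give $L^1(\Omega)$-convergence of $\varphi_N(X_M^{Z_n}(t))$, which is preserved by the ($L^1$-contractive) conditional expectation. This yields $\E[\varphi_N(X_M(t))\mid\cF_0]=u_M^{(N)}(t,X_M(0))$, and then $N\to\infty$ completes the proof.

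The main obstacle is the continuous dependence on the initial datum, $X_M^{Z_n}(t)\to X_M(t)$ in probability, which is where the quadratic nonlinearity must be handled carefully and only the polynomial moment bounds of Lemma~\ref{lem:expest}(i) are available. Writing $D=X_M^{Z_n}-X_M$, the additive noise cancels and $D$ solves a noise-free equation; an energy estimate using the cancellation identity $\langle B_M[u+v,D],D\rangle=-\tfrac12\langle u+v,B_M(D)\rangle$ derived from~\eqref{eq:BsymM}, together with the Gagliardo--Nirenberg inequality~\eqref{eq:GN} and Young's inequality, yields a pathwise bound of the form
\[
\|D(t)\|_{L^2}^2\le\|Z_n-X_M(0)\|_{L^2}^2\,\exp\Bigl(C\int_0^T\|\nabla(X_M^{Z_n}+X_M)\|_{L^2}^{4/3}\,dr\Bigr).
\]
Since $\|Z_n-X_M(0)\|_{L^2}\to0$ in probability while the family of exponential factors is tight uniformly in $n$ (thanks to the moment bounds of Lemma~\ref{lem:expest}(i) and a stopping-time localization of the quadratic term), the product tends to $0$ in probability; the earlier reduction to bounded $\varphi_N$ is precisely what allows one to avoid requiring exponential moments of this Gr\"onwall factor.
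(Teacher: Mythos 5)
Your proposal is correct, and its skeleton is the same as the paper's: truncate $\varphi$ to a bounded function, verify the identity for simple $\cF_0$-measurable initial data via pathwise uniqueness and independence of the driving noise from $\cF_0$, and pass to the limit using a Gr\"onwall-type continuous-dependence estimate for the difference of two solutions (exploiting that the additive noise cancels). There are, however, two genuine differences in execution worth recording. First, you condition on the fixed $\sigma$-algebra $\cF_0$ throughout and recover the stated identity at the end by the tower property; the paper instead conditions on the approximating simple variables $X_{0,n}$ themselves, so its conditioning $\sigma$-algebra varies with $n$, and it must invoke Doob's martingale convergence theorem plus a subsequence extraction to identify $\lim_n \E[\,\cdot\,|X_{0,n}]=\E[\,\cdot\,|X_0]$. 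Your route avoids Doob entirely, since with a fixed conditioning $\sigma$-algebra the $L^1$-contractivity of conditional expectation suffices; this is a mild but real simplification. Second, your continuous-dependence bound is symmetric in the two solutions: the Gr\"onwall factor $\exp\bigl(C\int_0^T\|\nabla(X_M^{Z_n}+X_M)\|_{L^2}^{4/3}\,dr\bigr)$ depends on $n$, so you must (and do) argue tightness of these factors uniformly in $n$ to get convergence in probability. The paper instead uses the one-sided local-monotonicity estimate of Liu--R\"ockner,
\begin{equation*}
\langle A(x-y)+B_M(x)-B_M(y),x-y\rangle_{L^2}\le -\tfrac34\|\nabla(x-y)\|_{L^2}^2+C\|x\|_{L^4}^4\,\|x-y\|_{L^2}^2,
\end{equation*}
in which only the \emph{limit} process $X_M$ enters the quadratic term; its Gr\"onwall factor is then a single a.s.\ finite random variable independent of $n$, so almost sure convergence is immediate with no tightness argument. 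Your tightness claim is nonetheless easily justified (no stopping times are really needed: for fixed $M$ one may use the inverse inequality $\|\nabla v\|_{L^2}\le \pi M\|v\|_{L^2}$ on $H_M$, or the energy identity~\eqref{energy-Id}, together with $\sup_n\E[\|Z_n\|_{L^2}^p]<\infty$ and Markov's inequality), so this difference only costs you one extra routine step.
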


Second, given parameters $\delta,\epsilon\in(0,\infty)$ and $q\in[1,\infty)$, set
\begin{equation}\label{eq:defPsi}
\Psi_{\delta,\epsilon,q}(x)=\exp\bigl(\epsilon \|x\|_{L^2}^2\bigr)  \bigl( 1+\|(-A)^{\frac{1}{4}+\delta } x\|_{L^2}^{q}\bigr),\quad \forall~x\in D((-A)^{\frac14+\delta}).
\end{equation}

\begin{lemma}\label{lem:Psi}
Assume that there exists $\gamma_0\in(0,\infty)$ such that $ \E [\exp(\gamma_0 \| X_0\|_{L^2}^2 ) ]< \infty$, and that there exist $p\in (32,\infty)$ and $\delta_0\in(0,\infty)$ such that $\E\bigl[\| (-A)^{\frac14 + \delta_0} X_0 \|_{L^2}^{p}\bigr]<\infty$.

Then for all $q\in[1,\frac{p}{2})$, $\delta\in[0,\delta_0)$ and $\epsilon\in(0,(1-\tfrac{2q}{p})\frac{\gamma_0}{1+2\gamma_0\|Q\|_{\mathcal{L}(L^2)}})$, there exists $C_{\gamma_0,\delta,\delta_0,\epsilon,p,q}(T,Q)\in(0,\infty)$ such that
\begin{equation}\label{eq:lemPsi}
\begin{aligned}
\underset{M\in\N}\sup~&\E\Big[\underset{t\in[0,T]}\sup \Psi_{\delta,\epsilon,q}(X_M(t))\Big]\\
&\le C_{\gamma_0,\delta,\delta_0,\epsilon,p,q}(T,Q)\left(1+\E\bigl[\exp\bigl(\gamma_0\|X_0\|_{L^2}^2\bigr)\bigr] \right)^2\left(1+\E\bigl[\|-A)^{\frac14+\delta_0}X_0\|_{L^2}^{p}\bigr]\right)^{\frac{2q}{p}}.
\end{aligned}
\end{equation}
\end{lemma}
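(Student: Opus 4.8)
The plan is to exploit the product structure of $\Psi_{\delta,\epsilon,q}$ in~\eqref{eq:defPsi}: it factorises into an exponential part $\exp(\epsilon\|x\|_{L^2}^2)$, which is controlled by the exponential moment bound~\eqref{expmoments_ranIV}, and a polynomial part $1+\|(-A)^{\frac14+\delta}x\|_{L^2}^q$, which is controlled by the regularity estimate~\eqref{eq:XM_moment_mixed_reg}. Since both factors are nonnegative, one has $\sup_{t\in[0,T]}\Psi_{\delta,\epsilon,q}(X_M(t))\le \big(\sup_{t}\exp(\epsilon\|X_M(t)\|_{L^2}^2)\big)\big(\sup_t(1+\|(-A)^{\frac14+\delta}X_M(t)\|_{L^2}^q)\big)$, and I would apply H\"older's inequality to the expectation with the conjugate exponents $a=\frac{p}{p-2q}$ and $b=\frac{p}{2q}$; note that $b>1$ holds precisely because $q<\frac p2$, and that $a\epsilon=\frac{\epsilon}{1-2q/p}$.

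For the exponential factor, since the supremum commutes with the increasing map $\exp(a\epsilon\,\cdot\,)$ one has $\big(\E[\sup_t\exp(a\epsilon\|X_M(t)\|_{L^2}^2)]\big)^{1/a}=\big(\E[\exp(a\epsilon\sup_t\|X_M(t)\|_{L^2}^2)]\big)^{1/a}$, and I would invoke Lemma~\ref{lem:expest} (part~(ii)) with $\beta=a\epsilon$. The hypothesis $\epsilon<(1-\tfrac{2q}{p})\tfrac{\gamma_0}{1+2\gamma_0\|Q\|_{\mathcal{L}(L^2)}}$ is exactly what guarantees $a\epsilon<\tfrac{\gamma_0}{1+2\gamma_0\|Q\|_{\mathcal{L}(L^2)}}$, so~\eqref{expmoments_ranIV} applies and yields a factor bounded, up to the explicit constant $(2e^{a\epsilon T\Tr(Q)})^{1/a}$, by $(\E[\exp(\gamma_0\|X_0\|_{L^2}^2)])^{\epsilon/\gamma_0}$; since $\epsilon/\gamma_0<1$ and $\exp(\gamma_0\|X_0\|_{L^2}^2)\ge1$, this is at most $1+\E[\exp(\gamma_0\|X_0\|_{L^2}^2)]$.

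For the polynomial factor, I would use $(1+r^q)^{b}\le C(1+r^{p/2})$ for $r\ge0$ (as $qb=\tfrac p2$) together with $\sup_{t}\|(-A)^{\frac14+\delta}X_M(t)\|_{L^2}\le\|(-A)^{\frac14+\delta}X_M\|_{C^\gamma([0,T],L^2)}$ to reduce matters to bounding $\E[\|(-A)^{\frac14+\delta}X_M\|_{C^\gamma([0,T],L^2)}^{p/2}]$. I would then apply Lemma~\ref{lem:X_sup_moments} with $\lambda=\tfrac14+\delta$, exponent $\tfrac p2$ (legitimate since $p>32$ gives $\tfrac p2>16\ge\tfrac43$), and auxiliary parameters $\gamma>0$ small and $\alpha\in(\tfrac14+\delta+\gamma,\tfrac12)$ chosen with $\alpha\le\tfrac14+\delta_0$; such a choice exists because $\delta<\delta_0$. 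The monotonicity~\eqref{eq:A_fracpownorminc} then gives $\E[\|(-A)^\alpha X_0\|_{L^2}^{p}]\le\E[\|(-A)^{\frac14+\delta_0}X_0\|_{L^2}^{p}]$, while the remaining moment $\E[\|X_0\|_{L^2}^{3p}]$ is dominated by the exponential moment via $r^{3p}\le C_{\gamma_0,p}\exp(\gamma_0 r^2)$. Raising to the power $\tfrac1b=\tfrac{2q}{p}<1$ and collecting constants delivers the polynomial factor as $C(1+\E[\|(-A)^{\frac14+\delta_0}X_0\|_{L^2}^p])^{2q/p}(1+\E[\exp(\gamma_0\|X_0\|_{L^2}^2)])$. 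Multiplying the two factors produces~\eqref{eq:lemPsi}, the square on $(1+\E[\exp(\gamma_0\|X_0\|_{L^2}^2)])$ arising from the exponential contributions of both factors.

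The main obstacle is the bookkeeping of parameters so that all invoked results apply \emph{simultaneously}: the upper bound on $\epsilon$ must push $a\epsilon$ into the admissible range of~\eqref{expmoments_ranIV}, the inequality $q<\tfrac p2$ (hence $p>32$ when one later needs $q=16$) is required both for $b>1$ in H\"older and for the exponent $\tfrac p2$ in Lemma~\ref{lem:X_sup_moments}, and the auxiliary exponents in the regularity lemma must be squeezed into $\tfrac14+\delta<\alpha<\min(\tfrac12,\tfrac14+\delta_0)$. In particular the constraint $\lambda=\tfrac14+\delta<\tfrac12$ inherited from Lemma~\ref{lem:X_sup_moments} is what pins the argument to the regime $\delta<\tfrac14$, which is precisely the range in which~\eqref{eq:lemPsi} is used in the proof of Theorem~\ref{thm:weakrate}. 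Everything else is a routine combination of H\"older's inequality with the two cited a priori bounds.
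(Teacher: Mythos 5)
Your proposal is correct and follows essentially the same route as the paper's own proof: H\"older's inequality with conjugate exponents $\tfrac{p}{p-2q}$ and $\tfrac{p}{2q}$, the exponential moment bound of Lemma~\ref{lem:expest}(ii) applied with $\beta=\tfrac{p\epsilon}{p-2q}$ (admissible exactly because of the stated upper bound on $\epsilon$), and Lemma~\ref{lem:X_sup_moments} with $\lambda=\tfrac14+\delta$ at exponent $\tfrac{p}{2}$, followed by absorbing $\E\bigl[\|X_0\|_{L^2}^{3p}\bigr]$ into the exponential moment to produce the square on $\bigl(1+\E[\exp(\gamma_0\|X_0\|_{L^2}^2)]\bigr)$. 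Your explicit restriction to $\delta<\tfrac14$ is the same limitation the paper builds in through its opening reduction ``without loss of generality $\delta_0<\tfrac14$,'' so the two arguments coincide in substance.
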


\begin{proof}
Recall that $\| (-A)^{\alpha} x \|_{L^2} \leq  \| (-A)^{\beta} x\|_{L^2}$ for all $x\in D((-A)^{\beta})$ and all $\alpha<\beta$
(see~\eqref{eq:A_fracpownorminc}). Thus, without loss of generality, we can assume $\delta_0< \frac{1}{4}$. 
Applying H\"older's inequality, one has for all 
$M\in\N$
\begin{align*}
\E\Big[\sup_{t\in [0,T]} \Psi_{\delta,\epsilon,q}(X_M(t))\Big]\leq &\left( \E\Big[\sup_{t\in [0,T]} \exp\bigl(\tfrac{p\epsilon}{p-2q}\|X_M(t)\|_{L^2}^2 \bigr)\Big]\right)^{1-\frac{2q}{p}}\\
&\times \left( \E\Big[\sup_{t\in [0,T]} \bigl(1+\|(-A)^{\frac{1}{4}+\delta } X_M(t)\|_{L^2}^{q}\bigr)^{\frac{p}{2q}}\Big] \right)^{\frac{2q}{p}}.
\end{align*}
Applying the exponential moment bounds~\eqref{expmoments_ranIV} from Lemma~\ref{lem:expest} (with $\beta=\frac{p\epsilon}{p-2q}$) and the inequality~\eqref{eq:XM_moment_mixed_reg} from Lemma~\ref{lem:X_sup_moments} (with $\lambda=\frac14+\delta$, $\gamma \in (0,\delta_0-\delta)$, $\alpha=\frac14+\delta_0$) one obtains that there exists $C_{\epsilon,p,q},\,C_{\delta,\delta_0,p}\in (0,\infty)$ such that
\begin{align*}
&\E\Big[\sup_{t\in [0,T]} \exp\bigl(\tfrac{p\epsilon}{p-2q}\|X_M(t)\|_{L^2}^2 \bigr)\Big]\le C_{\epsilon,p,q}(T,Q)\left( \E\bigl[\exp\bigl(\gamma_0 \|X_0\|_{L^2}^2 \bigr)\bigr]\right)^{\frac{p\epsilon}{(p-2q)\gamma_0}}\\
&\E\Big[ \sup_{t\in [0,T]} \bigl(1+\|(-A)^{\frac{1}{4}+\delta } X_M(t)\|_{L^2}^{q}\bigr)^{\frac{p}{2q}}\Big]\le C_{\delta,\delta_0,p}(T,Q)\Bigl( 1+\E \bigl[\| (-A)^{\frac14+\delta_0} X_0 \|_{L^2}^{p}\bigr]+  \E \bigl[\| X_0 \|_{L^2}^{3p} \bigr]\Bigr).
\end{align*}
Note that $\epsilon<\gamma_0$, therefore one has
\[
\left( \E\bigl[\exp\bigl(\gamma_0 \|X_0\|_{L^2}^2 \bigr)\bigr]\right)^{\frac{\epsilon}{\gamma_0}}\le 1+\E\bigl[\exp\bigl(\gamma_0 \|X_0\|_{L^2}^2 \bigr)\bigr].
\]
Moreover, there exists $C_{\gamma_0,p}\in(0,\infty)$ such that
\begin{align*}
1+\E \bigl[\| (-A)^{\frac{1}{4}+\delta_0} X_0 \|_{L^2}^{p}\bigr]+  \E \bigl[\| X_0 \|_{L^2}^{3p} \bigr]&\le \bigl(1+\E \bigl[\| (-A)^{\frac{1}{4}+\delta_0} X_0 \|_{L^2}^{p}\bigr]\bigr)\bigl(1+\E \bigl[\| X_0 \|_{L^2}^{3p} \bigr]\bigr)\\
&\le C_{\gamma_0,p}\bigl(1+\E \bigl[\| (-A)^{\frac{1}{4}+\delta_0} X_0 \|_{L^2}^{p}\bigr]\bigr)\bigl(1+\E\bigl[\exp\bigl(\gamma_0 \|X_0\|_{L^2}^2 \bigr)\bigr]\bigr).
\end{align*}
Combining the upper bounds then yields the inequality~\eqref{eq:lemPsi} and concludes the proof of Lemma~\ref{lem:Psi}.
\end{proof}

Finally, in the proof of Theorem~\ref{thm:weakrate} below, the following inequality is employed to obtain convergence rates of error terms with respect to $N$: for all $N,M\in\N$, if $M>N$, one has
\begin{equation}\label{eq:negpowAPNbound}
\|(-A)^{-\alpha}(P_N-P_M)x\|_{L^2} 
\le (\pi N)^{-2\alpha}\|x\|_{L^2},\quad \forall~x\in L^2,
\end{equation}
for all $N,M\in \N$ satisfying $M>N$, see~\eqref{eq:def_fracpowA}.\par 

We are now in a position to prove Theorem~\ref{thm:weakrate}. In the sequel, the objective is to obtain a rate of convergence with respect to $N\in\N$, and an auxiliary integer $M\in\N$ is introduced for technical reasons. The condition $M\ge N$ is imposed below.

\begin{proof}[Proof of Theorem~\ref{thm:weakrate}]
Without loss of generality we can assume that $\alpha \geq \frac12$ and $\delta_0<\frac{1}{4}$. Since $\alpha <1\wedge(\frac34+\delta_0)$, we can choose an auxiliary parameter $\delta\in(0,\delta_0)$ satisfying $\alpha<\frac34+\delta \in (0,1)$. Set $\beta=\frac14-\delta$ and observe that $\beta\in [0,1-\alpha)$.

Note that for all $q\in [1,\infty)$ and 
$\epsilon >0$ there exists $C_{\epsilon,q}\in(0,\infty)$ such that
\begin{equation}\label{eq:momentexpbounds}
\E [ \| X_0 \|_{L^2}^{q} ] \leq C_{\epsilon,q} \E[ \exp(\epsilon \| X_0 \|_{L^2}^{2} )].
\end{equation}
For all $N\in\N$, define
\[
\varepsilon_{N}=\E[\varphi(X_N(T))]-\E[\varphi(X(T))],
\]
and for all $M>N$ set
\[
\varepsilon_{N,M}=\E[\varphi(X_N(T))]-\E[\varphi(X_M(T))].
\]
It follows from the strong convergence result~\eqref{eq:strongGalConvRates} that for all $N\in\N$ one has
\begin{equation}\label{eq:eps_N_lim_eps_NM}
\varepsilon_{N}=\E[\varphi(X_N(T))]-\E[\varphi(X(T))]=\underset{M\to\infty}\lim~\bigl(\E[\varphi(X_N(T))]-\E[\varphi(X_M(T))]\bigr)
=\lim_{M\rightarrow \infty} \varepsilon_{N,M}.
\end{equation}
It thus suffices to prove upper bounds for $|\varepsilon_{N,M}|$ which are independent of the auxiliary parameter $M$ under the condition $M>N$, to obtain upper bounds for $|\varepsilon_{N}|$.

Recall that the mapping $u_M$ is defined by Equation~\ref{eq:Kolmogorov}. Applying the identity~\eqref{eq:Kolmogorov_randomIV} from Lemma~\ref{lemma:Kolmogorov_randomIV} and the tower property of conditional expectation, one obtains
\begin{align}\label{eq:errorsplit1}
\epsilon_{N,M}&=\E[u_M(0,X_N(T))]-\E[u_M(T,X_M(0))]=\eps_{N,M}^{1} +\eps_{N,M}^2,
\end{align}
where for all $M>N$ the auxiliary error terms are defined by
\begin{align*}
    \epsilon_{N,M}^1&=\E[u_M(0,X_N(T))]-\E[u_M(T,X_N(0))],\\
    \epsilon_{N,M}^2&=\E[u_M(T,X_N(0))]-\E[u_M(T,X_M(0))]=\E [u_M(T,P_N X_0)]-\E[u_M(T,P_MX_0)].
\end{align*}

We deal with the error terms $\epsilon_{N,M}^1$ and $\epsilon_{N,M}^2$ separately.

{\bf Treatment of $\epsilon_{N,M}^1$.}
The mapping $u_M$ defined by~\eqref{eq:Kolmogorov} is of class $C^{1,2}([0,T]\times H_M, \R)$. Therefore, applying the It\^o formula to the stochastic process $[0,T]\ni t \mapsto u_M(T-t,X_N(t))$, using the evolution equation~\eqref{eq:GalsolBurgers} for $X_N$ and the definition~\eqref{eq:defWQ} of the Wiener process $W^Q$, one obtains, for all $M>N$:
\begin{align*}
\epsilon_{N,M}^1&=\E\big[u_M(0,X_N(T))]-u_M(T,X_N(0))\big]\\
&=-\int_0^T\E\big[ \tfrac{\partial u_M}{\partial t}(T-t,X_N(t))\big]\,dt\\
&\quad +\int_0^T \E\bigl[Du_M(T-t,X_N(t)).\bigl(AX_N(t)+B_N(X_N(t))\bigr)\bigr]\,dt\\
&\quad +\tfrac12\int_0^T\sum_{j\in \N} q_j \E[D^2u_M(T-t,X_N(t)).\bigl(P_N e_j, P_N e_j\bigr)]\,dt,
\end{align*}\par 
Next, recall that the mapping $u_M$ is solution to the Kolmogorov equation~\eqref{eq:Kolmogorov_eqn}. Therefore, $\epsilon_{N,M}^{1}$ is decomposed as
\begin{equation}\label{eq:errorsplit2}
\epsilon_{N,M}^1=\epsilon_{N,M}^{1,1}+\epsilon_{N,M}^{1,2},
\end{equation}
where for all $M>N$ the error terms $\epsilon_{N,M}^{1,1}$ and $\epsilon_{N,M}^{1,2}$ are defined by
\begin{align*}
    \epsilon_{N,M}^{1,1}&= \int_0^T \E\bigl[Du_M(T-t,X_N(t)).\bigl(B_N(X_N(t))-B_M(X_N(t))\bigr)\bigr]\,dt,\\
    \epsilon_{N,M}^{1,2}&=\frac12\int_0^T\sum_{j\in \N} q_j \E\bigl[D^2u_M(T-t,X_N(t)).\bigl((P_N-P_M) e_j, (P_N+P_M) e_j\bigr)\bigr]\,dt.
\end{align*}

$\bullet$ {Treatment of $\epsilon_{N,M}^{1,1}$.} \par 
Recall that it is assumed that $M>N$, thus $X_N(t)\in H_N\subseteq H_M$, and thus for all $t\in[0,T]$ one has
\[
B_N(X_N(t))-B_M(X_N(t))=(P_N-P_M)B(X_N(t)).
\]

Recall that $\beta=\frac14-\delta\in[0,1-\alpha)$. Applying the inequality~\eqref{eq:negpowAPNbound} and the inequality~\eqref{eq:Kol_deriv_est} from Theorem~\ref{theo:Kol_reg} regarding the first order derivative $Du_M(t,\cdot)$, 
for all $\epsilon,\delta \in (0,\infty)$, there exists $C_{\alpha,\delta,\eps}(T,Q,\varphi)\in (0,\infty)$ such that for all $M>N$ and for all $t\in(0,T)$ one has
\begin{align*}
\big|\E\bigl[&Du_M(T-t,X_N(t)).\bigl(B_N(X_N(t))-B_M(X_N(t))\bigr)\bigr]\big|\\
&\le C_{\alpha, \delta,\eps}(T,Q,\varphi)(T-t)^{-(\alpha+\beta)}\E\bigl[\Psi_{\delta,\epsilon,6}(X_N(t))\|(-A)^{-(\alpha+\beta)}(P_N-P_M)B(X_N(t))\|_{L^2}\bigr]\\
&\le 2 C_{\alpha, \delta, \eps}(T,Q,\varphi) N^{-2(\alpha+\beta)}
(T-t)^{-(\alpha+\beta)}\E\bigl[\Psi_{\delta,\epsilon,6}(X_N(t))
\|B(X_N(t))\|_{L^2}\bigr],
\end{align*}
where $\Psi_{\delta,\epsilon,6}(X_N(t))$ is defined by~\eqref{eq:defPsi}.

Recall from~\eqref{eq:H1_algebra} that the Sobolev space $W^{1,2}$ is an algebra. This observation combined with the Poincar\'e inequality~\eqref{eq:poincare} and with Lemma~\ref{lem:deriv_bdd_fracdomains} yields
\begin{equation}\label{eq:boundBXN}
\|B(X_N(t))\|_{L^2}\le \|X_N(t)^2\|_{W^{1,2}}\le \|X_N(t)\|_{W^{1,2}}^2\le \tfrac{3}{2} \| \nabla X_N(t) \|_{L^2}^2 
= \tfrac{3}{2}\|(-A)^{\frac12}X_N(t)\|_{L^2}^2,
\end{equation}
where the standard definition $\|\cdot\|_{W^{1,2}}^2=\|\cdot\|_{L^2}^{2}+\|\nabla\cdot\|_{L^2}^{2}$ is considered.
Notice that in Section~\ref{sec:bounds} there are no moment bounds for $\|(-A)^{\frac12}X_N(t)\|_{L^2}$, whereas the inequality~\eqref{eq:XM_moment_mixed_reg} from Lemma~\ref{lem:X_sup_moments} provides moment bounds for $\|(-A)^{\frac12-\frac{\delta}{2}}X_N(t)\|_{L^2}$, for positive $\delta$, uniformly with respect to $t\in[0,T]$. Those moment bounds can be exploited by applying the inverse inequality  on $H_N$: for all $\beta\in(0,\frac12)$ and $N\in\N$ one has
\begin{equation}\label{eq:fracpowAPNbound}
\|(-A)^{\frac12}x\|_{L^2}\le (\pi N)^{2\beta}\|(-A)^{\frac12- \beta} x\|_{L^2},\quad \forall~x\in H_N.
\end{equation}

Combining the upper bounds above, recalling that $\frac12-\beta = \frac14+\delta$, one thus obtains the upper bounds
\begin{align*}
\E\bigl[\Psi_{\delta,\epsilon,6}(X_N(t))
\|B(X_N(t))\|_{L^2}\bigr]&\le \frac{3(\pi N)^{2\beta}}{2}\E\bigl[\Psi_{\delta,\epsilon,6}(X_N(t))\|(-A)^{\frac12-\beta}X_N(t)\|_{L^2}^2\bigr]\\
&\le \frac{3(\pi N)^{2\beta}}{2}\E\bigl[\Psi_{\delta,\epsilon,8}(X_N(t))\bigr].
\end{align*}
Therefore for all $M>N$ there exists a $C_{\alpha,\delta,\epsilon}(T,Q,\varphi)\in (0,\infty)$ such that for all $M>N$ one has
\begin{equation}\label{eq:eps11bound1}
|\epsilon_{N,M}^{1,1}|\le C_{\alpha,\delta,\epsilon}(T,Q,\varphi)
N^{-2\alpha}\int_0^T (T-t)^{-(\alpha+\beta)}\,dt \sup_{t\in [0,T]} \E\bigl[\Psi_{\delta,\epsilon,8}(X_N(t))\bigr].
\end{equation}
Let $\epsilon=\frac{p-2q}{2p}\frac{\gamma_0}{1+2\gamma_0\|Q\|_{\mathcal{L}(L^2)}}$ with $q=8$. Applying the inequality~\eqref{eq:lemPsi} from Lemma~\ref{lem:Psi}, one obtains the following inequality: for all $M>N$ there exists a $C_{\alpha,\gamma_0,\delta,\delta_0,p}(T,Q,\varphi)\in (0,\infty)$ such that
\begin{equation}\label{eq:boundeps11}
\begin{aligned}
|\epsilon_{N,M}^{1,1}| &\le C_{\alpha,\gamma_0,\delta,\delta_0,p}(T,Q,\varphi)N^{-2\alpha} 
\left(1+\E\bigl[ \exp(\gamma_0 \| X_0 \|_{L^2}^2 ) \bigr] \right)^2
\Bigl( 1+\bigl( \E \bigl[\| (-A)^{\frac{1}{4}+\delta_0} X_0 \|_{L^2}^{p}\bigr]\bigr)\Bigr)^{\frac{16}{p}}.
\end{aligned}
\end{equation}

$\bullet$ {Treatment of $\epsilon_{N,M}^{1,2}$.}

Owing to the inequality~\eqref{eq:Kol_2nd_deriv_est} from Theorem~\ref{theo:Kol_reg} regarding the second order derivative $D^2u_M(t,\cdot)$ (applied with $\beta=0$), 
for all $\eps,\delta \in (0,\infty)$ there exists a $C_{\alpha,\delta,\epsilon}(T,Q,\varphi)\in (0,\infty)$ such that for all $t\in(0,T)$ and all $M>N$ one has
\begin{align*}
\big|\E\bigl[&D^2u_M(T-t,X_N(t)).\bigl( (P_N-P_M)e_j,(P_N+P_M)e_j\bigr)]\big|\\
&\le C_{\alpha,\delta,\epsilon}(T,Q,\varphi)(T-t)^{-\alpha}\E[\Psi_{\delta,\epsilon,16}(X_N(t))]\|(-A)^{-\alpha}(P_N-P_M)e_j\|_{L^2}\|(P_N+P_M)e_j\|_{L^2}\\
&\le 2 C_{\alpha,\delta,\epsilon}(T,Q,\varphi)(T-t)^{-\alpha}\E[\Psi_{\delta,\epsilon,16}(X_N(t))]N^{-2\alpha},
\end{align*}
using the inequality~\eqref{eq:negpowAPNbound} in the last step, and where $\Psi_{\delta,\epsilon,16}(X_N(t))$ is defined by~\eqref{eq:defPsi}.

Recall that $\sum_{j\in_N}q_j={\rm Tr}(Q)<\infty$. There thus exists $C_{\alpha,\delta,\eps}(T,Q,\varphi)\in (0,\infty)$ such that for all $M>N$ one has
\begin{equation}\label{eq:eps12bound1}
|\epsilon_{N,M}^{1,2}|\le C_{\alpha,\delta,\eps}(T,Q,\varphi)
N^{-2\alpha}\int_0^T (T-t)^{-(\alpha+\beta)}\,dt \sup_{t\in [0,T]}  \E\bigl[\Psi_{\delta,\epsilon,16}(X_N(t))\bigr].
\end{equation}
Let $\epsilon=\frac{p-2q}{2p}\frac{\gamma_0}{1+2\gamma_0\|Q\|_{\mathcal{L}(L^2)}}$ with $q=16$. Applying the inequality~\eqref{eq:lemPsi} from Lemma~\ref{lem:Psi}, one obtains that there exists a $C_{\alpha,\gamma_0,\delta,\delta_0,p}(T,Q,\varphi)\in (0,\infty)$ such that for all $M>N$ one has
\begin{equation}\label{eq:boundeps12}
\begin{aligned}
|\epsilon_{N,M}^{1,2}| &\le C_{\alpha,\gamma_0,\delta,\delta_0,p}(T,Q,\varphi)N^{-2\alpha} 
\left(1+\E\bigl[ \exp(\gamma_0 \| X_0 \|_{L^2}^2 ) \bigr] \right)^2
\Bigl( 1+\bigl( \E \bigl[\| (-A)^{\frac{1}{4}+\delta_0} X_0 \|_{L^2}^{p}\bigr]\bigr)\Bigr)^{\frac{32}{p}}.
\end{aligned}
\end{equation}

{\bf Treatment of $\epsilon_{N,M}^2$.}\par 

For all $M>N$, one has
\begin{align*}
|\epsilon_{N,M}^2|&=|\E [u_M(T,P_N X_0)-u_M(T,P_M X_0)]|\\
& =  \Big|\E \Big[
    \int_{0}^{1} Du_M(T,(P_M X_0+r(P_N- P_M) X_0)).(P_N X_0 - P_M X_0) \,dr \Big]\Big|.
\end{align*}
Since $P_N$ and $P_M$ are the orthogonal projections on the eigenspaces $H_N$ and $H_M$ of $A$, one has 
\begin{equation}\label{eq:simpleAPNest}
\| (-A)^{\frac14+\delta_0} (P_M X_0 + r(P_N-P_M)X_0 ) \|_{L^2}
\leq 
\| (-A)^{\frac14+\delta_0} X_0 \|_{L^2},\quad \forall~r\in[0,1].
\end{equation}
Owing to this and to the inequality~\eqref{eq:Kol_deriv_est} from Theorem~\ref{theo:Kol_reg}  (with $\epsilon=\gamma_0/3$) regarding the first order derivative $Du_M(t,\cdot)$, there exists $C_{\alpha,\gamma_0,\delta_0}(T,Q,\varphi)\in (0,\infty)$ such that for all $M>N$ one has
\begin{equation*}
\begin{aligned}
|\epsilon_{N,M}^2|
&\le C_{\alpha,\gamma_0,\delta_0}(T,Q,\varphi)
\E\big[
    \exp\big(\tfrac{\gamma_0}{3} \|X_0\|_{L^2}^2 \big) 
    \big( 1+\|(-A)^{\frac{1}{4}+\delta_0 } X_0\|_{L^2}^{6}\big)
    \|(-A)^{-\alpha}(P_N-P_M)X_0\|_{L^2}
    \big]\\
&\le C_{\alpha,\gamma_0,\delta_0}(T,Q,\varphi)
    N^{-2\alpha}\E[
    \exp\big(\tfrac{\gamma_0}{3} \|X_0\|_{L^2}^2 \big) 
    \big( 1+\|(-A)^{\frac{1}{4}+\delta_0 } X_0\|_{L^2}^{6}\big)
    \|X_0\|_{L^2}
    \big].
\end{aligned}
\end{equation*}
Therefore applying~\eqref{eq:momentexpbounds} and H\"older's inequality, one obtains that there exists a $C_{\alpha,\gamma_0,\delta_0}(T,Q,\varphi)\in (0,1)$ such that for all $M>N$ one has
\begin{equation}
\begin{aligned}\label{eq:boundeps2}
|\epsilon_{N,M}^2|
&
\le C_{\alpha,\gamma_0,\delta_0}(T,Q,\varphi) 
N^{-2\alpha} 
\bigl( \E[
    \exp\big(\gamma_0 \|X_0\|_{L^2}^2 \big) ]\bigr)^{\frac{2}{3}}
    \bigl( 1+\bigl( \E[\|(-A)^{\frac{1}{4}+\delta_0 } X_0\|_{L^2}^{18}\bigr)^{\frac{1}{3}} \bigr).
\end{aligned}
\end{equation}
\par \medskip 

{\bf Conclusion.} \par
Combining~\eqref{eq:eps_N_lim_eps_NM},~\eqref{eq:errorsplit1},~\eqref{eq:errorsplit2},~\eqref{eq:boundeps11},~\eqref{eq:boundeps12}
and~\eqref{eq:boundeps2} completes the proof of Thoerem~\ref{thm:weakrate}.
\end{proof}

\appendix
\section{Proof of the moment bounds}\label{app}

\subsection{Proof of Lemma~\ref{lem:expest}} \label{subsectionA1}

\begin{proof}[Proof of Lemma~\ref{lem:expest}]

\eqref{it:momentbounds}
Using the property~\eqref{eq:BsymM2}, applying It\^o's formula for $\|.\|_{L^2}^2$ and performing an integration by parts, we deduce the following energy equality, for any stopping time $\tau \colon \Omega \rightarrow [0,T]$:
\begin{equation}		\label{energy-Id}
\|X_M(\tau)\|_{L^2}^2 + 2 \int_0^\tau \|\nabla X_M(s)\|_{L^2}^2 \,ds = \|P_M X_0\|_{L^2}^2 + 2 \int_0^\tau \langle X_M(s),dW^Q(s) \rangle_{L^2}  + \tau \Tr(Q).
\end{equation}
For any 
$R\in (0,\infty)$  define the stopping time $\tau_R:=\inf\{ t\in[0,T]\, : \, \|X_M(t)\|_{L^2}\geq R\}$. Owing to global well-posedness of~\eqref{eq:GalsolBurgers}, note that $\lim_{R\rightarrow \infty} \tau_R = T$.

Let $p\in [4, \infty)$. Applying the It\^o formula to~\eqref{energy-Id} for the $C^2$-function $\xi\mapsto \xi^{\frac{p}{2}}$, one has for any  $t\in [0,T]$
\begin{align}\label{eq:pthmomentIto}
\|&X_M(t \wedge \tau_R)\|_{L^2}^{p} + p \int_0^{t \wedge \tau_R} \!\! \|\nabla X_M(s)\|_{L^2}^2 \|X_M(s)\|_{L^2}^{p-2} \,ds \nonumber 
\\ & = \|P_M X_0\|_{L^2}^{p}  
+ p \int_0^{t \wedge \tau_R} \!\! 
  \|X_M(s)\|_{L^2}^{p-2} \langle X_M(s),dW^Q(s) \rangle_{L^2}   \nonumber 
\\ 
&  \quad 
+  \tfrac{p}{2} \Tr(Q)\! \int_0^{\tau\wedge \tau_R} \!\! \|X_M(s)\|_{L^2}^{p-2}\,ds
+ \tfrac{p}{4}(p-2) \int_0^{\tau\wedge \tau_R} \!\!  \|X_M(s)\|_{L^2}^{p-4} \|\sqrt{Q} X_M(s)\|_{L^2}^2 \,ds.
\end{align}
The linear operator $\sqrt{Q}$ is bounded with $\|\sqrt{Q}\|_{\mathcal{L}(L^2)}^{2}=\|Q\|_{\mathcal{L}(L^2)}\le {\rm Tr}(Q)$;  thus taking expectations (note that $\| X_M(t\wedge \tau_R)\|_{L^2}\leq R + \| X_0 \|_{L^2}$ a.s. for all $t\in[0,T]$, therefore the stochastic integral above is a martingale) we deduce that for any $t\in [0,T]$ 
\begin{align*}
&\E\bigl[ \|X_M(t \wedge \tau_R)\|_{L^2}^{p} \bigr] + p \E\Bigl[  \int_0^{t \wedge \tau_R} \!\! \|\nabla X_M(s)\|_{L^2}^2 \|X_M(s)\|_{L^2}^{p-2} \,ds  \Bigr] 
\\ & \qquad \leq \E[\|P_MX_0\|_{L^2}^{p}] 
+ \tfrac{p^2}{4}  \Tr(Q)\;  \E\Bigl[ \int_0^t \bigl( \|X_M(s\wedge \tau_R)\|_{L^2}^{p} +1\bigr) \,ds\Bigr].
\end{align*} 
Recalling that $\|P_MX_0\|_{L^2}\le \|X_0\|_{L^2}$, neglecting the second term in the above left hand side and applying the Gr\"onwall lemma, 
we deduce that there exists $C_{p}(T,Q) \in (0,\infty)$ such that
\begin{equation}\label{sup_E_2p}
\underset{M\in \N }\sup~
\underset{R\in (0,\infty)}\sup~\underset{t\in [0,T]}\sup~\E\bigl[ \|X_M(t\wedge \tau_R)\|_{L^2}^{p} \bigr] \leq  C_{p}(T,Q)  \E[ \| X_0 \|_{L^2}^{p}].
\end{equation}
Furthermore, the Burkholder--Davis--Gundy inequality (see, e.g.,~\cite[Theorem 3.14 and Theorem 4.12]{DaPratoZabczyk:1992})  yields 
\begin{align*}
    &\E\Bigl[\underset{t\in [0,T]}\sup~\Big|\int_0^{t\wedge \tau_R}   \|X_M(s)\|_{L^2}^{p-2} \langle X_M(s),dW^Q(s) \rangle_{L^2}\Big|\Bigr]\\
    & \qquad \le 3\E\Bigl[\Bigl(\int_{0}^{T\wedge \tau_R}\|X_M(s)\|_{L^2}^{2p-4}\|\sqrt{Q}X_M(s)\|_{L^2}^{2}  \,ds \Bigr)^{\frac12}\Bigr]\\
    &\qquad \le 3{\rm Tr}(Q)\E\Bigl[\underset{s\in[0,T]}\sup~\|X_M(s\wedge\tau_R)\|_{L^2}^{\frac{p}{2}} \Bigl(\int_{0}^{T\wedge \tau_R}\|X_M(s)\|_{L^2}^{p-2} \,ds \Bigr)^{\frac12}\Bigr].
\end{align*}
Therefore, owing to~\eqref{eq:pthmomentIto} we deduce  that for every $t\in [0,T]$
\begin{align*}
&\E\Bigl[ \underset{t\in [0,T]}\sup~\Bigl( \|X_M(t\wedge \tau_R)\|_{L^2}^{p} + p \int_0^{t\wedge \tau_R} \!\! \|\nabla X_M(s)\|_{L^2}^2 \|X_M(s)\|_{L^2}^{p-2} \,ds  \Bigr)\Bigr]  
\\ & \qquad 
\leq \E[\|X_0\|_{L^2}^{p}] 
+ 3p \Tr(Q)\; \E\Bigl[\underset{s\in [0,T]}\sup~\|X_M(s\wedge \tau_R)\|_{L^2}^{\frac{p}{2}} \bigl( \; \int_0^{T\wedge \tau_R} \|X_M(s)\|_{L^2}^{p-2}\,ds \bigr)^{\frac{1}{2}} \Bigr]\\
& \qquad \quad +  \frac{p^2}{4} \Tr(Q)\;  \E\Bigl[ \int_0^{T}\bigl( \|X_M(s\wedge \tau_R)\|_{L^2}^{p} +1\bigr) \,ds\Bigr]\\
& \qquad \leq \E[\|X_0\|_{L^2}^{p}]  +  \tfrac{1}{2} \E\bigl[ \underset{s\in [0,T]}\sup~\|X_M(s\wedge \tau_R)\|_{L^2}^{p} \bigr]   
\\ & \qquad \quad 
+ \frac{19 p^2}{4} \Tr(Q)  \; \E\bigl[ \int_0^T \bigl( \|X_M(s\wedge \tau_R)\|_{L^2}^{p} +1\bigr) \,ds\bigr].
\end{align*}

Note that $\underset{t\in [0,T]}\sup~\|X_M(t\wedge \tau_R)\|_{L^2}^{p}\leq R^{p}+\|X_0\|_{L^2}^{p}$, thus $\E\bigl[ \underset{s\in [0,T]}\sup~\|X_M(s\wedge \tau_R)\|_{L^2}^{p} \bigr]<\infty$. As a result, applying the upper bound~\eqref{sup_E_2p} shows that there exists $C_p(T,Q)\in (0,\infty)$ such that
\begin{align*}
\underset{M\in \N}\sup~\underset{R\in (0,\infty)}\sup~ &\E\Bigl[ \underset{t\in [0,T]}\sup~
\Bigl( \|X_M(t\wedge \tau_R)\|_{L^2}^{p} + p \int_0^{t\wedge \tau_R} \!\! \|\nabla X_M(s)\|_{L^2}^2 \|X_M(s)\|_{L^2}^{p-2} \,ds  \Bigr)\Bigr]  
\\ & \qquad \leq C_{p}(T,Q) \E[ \| X_0 \|_{L^2}^{p}].
\end{align*}

Letting  $R\to \infty$ and applying the monotone convergence theorem implies that the upper bound~\eqref{pmoments} holds for every $p\in [4,\infty)$. \par 
\eqref{it:expmomentbounds}
Multiplying the identity~\eqref{energy-Id} (with $\tau=t$) by 
$\beta\in (0,\infty)$, we deduce 
\begin{align} \beta \|X_M(t)\|_{L^2}^2 +& \beta \int_0^t \| \nabla X_M(s) \|_{L^2}^2 \,ds 
= \beta \|P_M X_0\|_{L^2}^2 +\beta t \Tr(Q)  \nonumber
\\ & \quad +2\beta \int_0^t \langle X_M(s)\,dW^Q(s) \rangle_{L^2}- \beta \int_0^t \| \nabla X_M(s)\|_{L^2}^2 \,ds , \quad \forall t\geq 0. \label{eq:aux_expo_mom}
\end{align}
For all $\beta \in (0,\infty)$ and all $t\in [0,T]$ set
\[
N^{(\beta)}(t)=2\beta \int_0^t \langle X_M(s),dW^Q(s) \rangle_{L^2},
\]
and define the random variable
\begin{equation}\label{eq:Ybeta}
\begin{aligned}
Y^{(\beta)}&= \exp\Bigl( \beta \underset{t\in [0,T]}\sup~\bigl( 2\int_0^t \langle X_M(s),
dW^Q(s)\rangle_{L^2} -\int_0^t \|\nabla X_M(s)\|_{L^2}^2 \,ds \bigr) \Bigr)
\\ & 
=\exp\Bigl( \underset{t\in [0,T]}\sup~\bigl(N^{(\beta)}(t)-\beta\int_0^t \|\nabla X_M(s)\|_{L^2}^2 \,ds \bigr) \Bigr).
\end{aligned}
\end{equation}
The stochastic process $(N^{(\beta)}(t))_{0\leq t\leq T}$ is a square integrable martingale, and its quadratic variation $\bigl(\langle N^{(\beta)}\rangle_t\bigr)_{t\ge 0}$ is given by
\[
\langle N^{(\beta)}\rangle_t = 4 \beta^2  
\int_0^t \|\sqrt{Q} X_M(s)\|_{L^2}^2 \,ds,\quad\forall~t\ge 0.
\]
Owing to the Poincar\'e inequality~\eqref{eq:poincare} and observing that $\|\sqrt{Q}\|_{\mathcal{L}(L^2)}^2=\|Q\|_{\mathcal{L}(L^2)}$, 
 \[
\langle N^{(\beta)}\rangle_t \leq  2 \beta^2 \| Q \|_{\mathcal{L}(L^2)} 
\int_0^t \| \nabla X_M(s)\|_{L^2}^2 \,ds, \quad \forall t\geq 0.
\]
To ease notations, set  $c(\beta) = \tfrac{1}{2 \beta \| Q \|_{\mathcal{L}(L^2)}}$. Combining equation~\eqref{eq:Ybeta} with the above upper bound one obtains for all $t\in[0,T]$
\[
-\beta \int_0^t \| \nabla X_M(s)\|_{L^2}^2 \,ds \le -c(\beta)\langle N^{(\beta)}\rangle_t,
\]
and thus the upper bound
\[
Y^{(\beta)}\le \exp\Bigl( \underset{t\in [0,T]}\sup~\bigl(N^{(\beta)}(t) - c(\beta) \langle N^{(\beta)}\rangle_t\bigr)\Bigr).
\]
For all $t\in [0,T]$, set
\[
\hat{N}^{(\beta)}(t)=2c(\beta)N^{(\beta)}(t),\quad M^{(\beta)}(t)=\exp\bigl(\hat{N}^{(\beta)}(t)-\frac12\langle \hat{N}^{(\beta)}\rangle_t\bigr).
\]
For all $K\in(0,\infty)$, owing to the Markov inequality we deduce 
\begin{align*}
    \PP(Y^{(\beta)}\ge e^K)&=\PP\bigl((Y^{(\beta)})^{2c(\beta)}\ge e^{2c(\beta)K})\\
    &\le\PP\left(\exp\Big(2c(\beta)\underset{t\in [0,T]}\sup~\bigl(N^{(\beta)}(t) - c(\beta) \langle N^{(\beta)}\rangle_t\bigr)\Bigr) \ge e^{2c(\beta)K} \right)\\
    &\le \PP\left(\exp\Big(\underset{t\in [0,T]}\sup~\bigl(\hat{N}^{(\beta)}(t) - \frac12\langle \hat{N}^{(\beta)}\rangle_t\bigr)\Bigr) \ge e^{2c(\beta)K} \right)\\
    &\le \PP\Bigl(\underset{t\in [0,T]}\sup~M^{(\beta)}(t)\ge e^{2c(\beta)K} \Bigr).
\end{align*}

Note that (see e.g.  \cite[Proposition~3.4, p.~140]{RevuzYor:1991})  the process $M^{(\beta)}$ is a (continuous) local martingale, and hence  (see e.g. \cite[Exercise~1.46 p.~129]{RevuzYor:1991})
that  $M^{(\beta)}$ is a supermartingale. Finally, since $M^{(\beta)}(0)=1$, \cite[Exercise~1.15, p. 55]{RevuzYor:1991} implies the following maximal inequality
\[
\PP\Big( \sup_{t\in [0;T]} M^{(\beta)}(t) \geq  e^{2 c(\beta) K}\Big) 
\leq e^{-2c(\beta)K}.
\]

Assume from now on that $\beta\in(0,\frac{1}{2\| Q \|_{\mathcal{L}(L^2)}})$; this implies that $c(\beta)>1$ and that
\[
\PP(Y^{(\beta)}\ge e^K)\le e^{-2K}, \quad \forall K>0.
\]
Therefore one obtains
\begin{align*}
\E(Y^{(\beta)})&=\int_{0}^{\infty}\PP(Y^{(\beta)}>y)\,dy 
\le 1+\int_{1}^{\infty}\PP(Y^{(\beta)}\ge y)\,dy 
\le 1+\int_0^\infty \PP\big(Y^{(\beta)}\ge e^K\big) e^K \,dK \nonumber \\
&
\le 1+\int_0^\infty e^{-K} \,dK=2, \quad \forall \beta\in\left(0,\frac{1}{2\| Q \|_{\mathcal{L}(L^2)}}\right).
\end{align*}
Using the identity~\eqref{eq:aux_expo_mom} and  the definition~\eqref{eq:Ybeta} of the random variable $Y^{(\beta)}$, we deduce
\[
\E\Bigl[ \exp \Bigl( \beta \underset{t\in [0,T]}\sup~ \|X_M(t)\|_{L^2}^2
+ \beta \int_0^T \!\! \| \nabla X_M(s)\|_{L^2}^2 \,ds \Bigr) \Bigr]
\le  \E \bigl[Y^{(\beta)} \exp\bigl( \beta \| X_0\|_{L^2}^2\bigr) \bigr] \exp(\beta T{\rm Tr}(Q)).
\]
Recall that the condition $\beta < \frac{\gamma_0}{1+ 2  \gamma_0 \| Q \|_{\mathcal{L}(L^2)}}$ is assumed to be satisfied. 
Applying H\"older's inequality with the conjugate exponents $p=\frac{\gamma_0}{\beta}$ and $q=\frac{\gamma_0}{\gamma_0-\beta}$, we deduce 
\[
\E \bigl[Y^{(\beta)} \exp\bigl( \beta \| X_0\|_{L^2}^2\bigr) \bigr]\le \bigl(\E\bigl[(Y^{(\beta)})^{\frac{\gamma_0}{\gamma_0-\beta}}\bigr]\bigr)^{\frac{\gamma_0-\beta}{\gamma_0}}\bigl(\E\bigl[\exp\bigl( \gamma_0 \| X_0\|_{L^2}^2\bigr)\bigr]\bigr)^{\frac{\beta}{\gamma_0}}.
\]
Note that  $\E[(Y^{(\beta)})^{\frac{\gamma_0}{\gamma_0-\beta}}]=\E[(Y^{(\frac{\beta \gamma_0}{\gamma_0-\beta})}]$; the condition on $\beta$ above yields the inequality  $\frac{\beta \gamma_0}{\gamma_0-\beta}< \frac{1}{2\| Q \|_{\mathcal{L}(L^2)}}$, thus applying the inequality above one has $\E[(Y^{(q\beta)})] \le 2$.
The proof of the exponential moment bounds~\eqref{expmoments_ranIV} is thus completed.

This concludes the proof of Lemma~\ref{lem:expest}.

\end{proof}

\subsection{Some auxiliary tools}

Before proceeding with the proofs of Lemmas~\ref{lem:momLinfty} and \ref{lem:X_sup_moments}, let us introduce some notation and state auxiliary results.

We first establish regularity results of the stochastic convolution using the factorization method of Da Prato, Kwapie\'n, and Zabczyk~\cite{DaPratoKwapienZabczyk:1987}. 
Such results are classical, see also~\cite[Theorem 10.5]{NeervenVeraarWeis:2008}, so we provide the argument here only for the readers' convenience.

\begin{lemma}\label{lem:reg_stochconv}
Given   $M\in \N$, let $I_M\colon [0,T]\times \Omega \rightarrow L^2$  be defined  by
\begin{equation}\label{eq:stochconv}
 I_M(t) = \int_0^t e^{(t-s)A}P_M\,dW^{Q}(s), \quad \forall  t\in [0,T].
\end{equation}
For all 
 $\lambda,\mu \in (0,1)$ satisfying $\lambda+\mu<\frac{1}{2}$ and all $p\in [1, \infty)$ there exists $C_{\lambda,\mu,p}(T)\in (0,\infty)$  such that $I_M(t)\in D((-A)^{\lambda})$ a.s.\ for all $t\in [0,T]$ and
\begin{align}\label{eq:reg_stochconv_mixed_unbalanced}
\underset{M\in \N}\sup~\E\bigl[ \| (-A)^{\lambda} I_M \|^p_{C^{\mu}([0,T],  L^2)}   \bigr]
&\leq 
C_{\lambda,\mu,p}(T) (\Tr(Q))^{p/2}. 
\end{align}
\end{lemma}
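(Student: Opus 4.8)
The plan is to use the factorization method of Da Prato--Kwapie\'n--Zabczyk. Fix $\lambda,\mu\in(0,1)$ with $\lambda+\mu<\frac12$ and choose an auxiliary exponent $\sigma\in(\lambda+\mu,\frac12)$. Since $(\Omega,\cF,\PP)$ is a probability space, Jensen's inequality reduces the claim to the case of $p$ arbitrarily large, and the dependence $(\Tr(Q))^{p/2}$ is preserved under this reduction; I would therefore fix $p>\frac{1}{\sigma-\lambda-\mu}$. Introduce the factorized process
\[
Y_M(s)=\int_0^s (s-r)^{-\sigma}e^{(s-r)A}P_M\,dW^Q(r),\quad s\in[0,T],
\]
and recall the elementary identity $\int_s^t (t-r)^{\sigma-1}(r-s)^{-\sigma}\,dr=\frac{\pi}{\sin(\pi\sigma)}$ for $0\le s<t$. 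By the stochastic Fubini theorem this yields the representation $I_M(t)=\frac{\sin(\pi\sigma)}{\pi}(R_\sigma Y_M)(t)$, where $(R_\sigma f)(t)=\int_0^t (t-s)^{\sigma-1}e^{(t-s)A}f(s)\,ds$.

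First I would bound the moments of $Y_M$. Using the Burkholder--Davis--Gundy inequality together with the uniform bound $\|e^{hA}P_M\sqrt{Q}\|_{\mathcal{L}_2(L^2)}\le \|\sqrt{Q}\|_{\mathcal{L}_2(L^2)}=(\Tr(Q))^{1/2}$ (valid since $\|P_M\|_{\mathcal{L}(L^2)}\le 1$ and $\|e^{hA}\|_{\mathcal{L}(L^2)}\le 1$), one gets for $p\ge 2$
\[
\E\bigl[\|Y_M(s)\|_{L^2}^p\bigr]\le C_p\,\E\Bigl[\Bigl(\int_0^s (s-r)^{-2\sigma}\|e^{(s-r)A}P_M\sqrt{Q}\|_{\mathcal{L}_2(L^2)}^2\,dr\Bigr)^{p/2}\Bigr]\le C_{p,\sigma}(T)(\Tr(Q))^{p/2},
\]
where finiteness of $\int_0^s (s-r)^{-2\sigma}\,dr$ uses $\sigma<\frac12$. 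Integrating over $s\in[0,T]$ gives $\sup_{M\in\N}\E[\|Y_M\|_{L^p([0,T],L^2)}^p]\le C_{p,\sigma}(T)(\Tr(Q))^{p/2}$, the key point being uniformity in $M$.

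Next I would prove the deterministic mapping property that $R_\sigma$ is bounded from $L^p([0,T],L^2)$ into $C^\mu([0,T],D((-A)^\lambda))$. For the spatial regularity, commuting $(-A)^\lambda$ under the integral, applying Lemma~\ref{lem:A_analytic}(1) to get $\|(-A)^\lambda e^{(t-s)A}\|_{\mathcal{L}(L^2)}\le C_\lambda (t-s)^{-\lambda}$, and using H\"older's inequality (whose kernel $(t-s)^{\sigma-1-\lambda}$ is integrable to the power $\frac{p}{p-1}$ because $\sigma-\lambda>\frac1p$) gives $\|(-A)^\lambda(R_\sigma f)(t)\|_{L^2}\le C_{\lambda,\sigma,p}(T)\|f\|_{L^p([0,T],L^2)}$, which in particular shows $I_M(t)\in D((-A)^\lambda)$ a.s. For the H\"older continuity, for $\tau<t$ I would split $(-A)^\lambda[(R_\sigma f)(t)-(R_\sigma f)(\tau)]$ into the near-diagonal piece $\int_\tau^t$ and the regularizing piece $\int_0^\tau$, and in the latter further separate the kernel difference $(t-s)^{\sigma-1}-(\tau-s)^{\sigma-1}$ from the semigroup difference $e^{(t-s)A}-e^{(\tau-s)A}=(e^{(t-\tau)A}-I)e^{(\tau-s)A}$. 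The semigroup difference is estimated by Lemma~\ref{lem:A_analytic}(2) as $\|(-A)^\lambda(e^{(t-\tau)A}-I)e^{(\tau-s)A}\|_{\mathcal{L}(L^2)}\le (t-\tau)^\mu\|(-A)^{\lambda+\mu}e^{(\tau-s)A}\|_{\mathcal{L}(L^2)}\le C(t-\tau)^\mu(\tau-s)^{-(\lambda+\mu)}$, while the kernel difference is controlled by the interpolation bound $|(t-s)^{\sigma-1}-(\tau-s)^{\sigma-1}|\le C(t-\tau)^\mu(\tau-s)^{\sigma-1-\mu}$ for $s<\tau<t$ (obtained by interpolating the mean-value estimate against the trivial one), combined again with Lemma~\ref{lem:A_analytic}(1). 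Each resulting integral has kernel $\sim(\tau-s)^{\sigma-1-\lambda-\mu}$ or $(t-s)^{\sigma-1-\lambda}$ and contributes a factor $(t-\tau)^\mu$, the exponents all closing up thanks to $\sigma-\lambda-\mu>\frac1p$ and $t-\tau\le T$; this yields $\|R_\sigma f\|_{C^\mu([0,T],D((-A)^\lambda))}\le C_{\lambda,\mu,\sigma,p}(T)\|f\|_{L^p([0,T],L^2)}$.

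Finally, applying this deterministic estimate pathwise to $f=Y_M$ and taking $p$-th moments,
\[
\E\bigl[\|(-A)^\lambda I_M\|_{C^\mu([0,T],L^2)}^p\bigr]\le C_{\lambda,\mu,\sigma,p}(T)\,\E\bigl[\|Y_M\|_{L^p([0,T],L^2)}^p\bigr]\le C_{\lambda,\mu,p}(T)(\Tr(Q))^{p/2}
\]
uniformly in $M$, which is the assertion for large $p$; the general case $p\in[1,\infty)$ then follows by Jensen's inequality, preserving the $(\Tr(Q))^{p/2}$ factor. I expect the main obstacle to be the bookkeeping in the H\"older estimate of $R_\sigma$: one must split the increment correctly and verify that every singular integral converges and produces exactly the factor $(t-\tau)^\mu$ uniformly in $\tau<t\le T$, which is precisely where the constraints $\lambda+\mu<\sigma<\frac12$ and the choice of large $p$ are used.
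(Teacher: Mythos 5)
Your proof is correct, and it rests on the same underlying idea as the paper's proof --- the Da Prato--Kwapie\'n--Zabczyk factorization method --- but the execution is genuinely different. The paper's proof is essentially a reduction to a citation: after the same reduction to large $p$ (via H\"older rather than Jensen, which is immaterial), it invokes \cite[Proposition 4.2]{NeervenVeraarWeis:2008}, and the remaining work consists of translating notation: identifying $E_\eta$ with $D((-A)^{\eta})$, identifying the $\gamma$-radonifying norm with the Hilbert--Schmidt norm (legitimate because $L^2$ is a Hilbert space), and bounding $\| P_M\sqrt{Q} \|_{\mathcal{L}_2(L^2)}\leq (\Tr(Q))^{1/2}$. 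You instead carry out the factorization from scratch: the Beta-function identity and stochastic Fubini to write $I_M=\frac{\sin(\pi\sigma)}{\pi}R_\sigma Y_M$, a Burkholder--Davis--Gundy bound for $Y_M$ that is uniform in $M$ because $\|e^{hA}P_M\sqrt{Q}\|_{\mathcal{L}_2(L^2)}\leq (\Tr(Q))^{1/2}$, and a hands-on proof that $R_\sigma$ maps $L^p([0,T],L^2)$ into $C^{\mu}([0,T],D((-A)^{\lambda}))$, including the correct three-way splitting of the increment and the interpolated kernel estimate $|(t-s)^{\sigma-1}-(\tau-s)^{\sigma-1}|\leq C (t-\tau)^{\mu}(\tau-s)^{\sigma-1-\mu}$. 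Your exponent bookkeeping is exactly right: $\sigma<\tfrac12$ makes the quadratic-variation integral finite, and $\sigma-\lambda-\mu>\tfrac1p$ closes every singular integral, which is why such a $\sigma$ exists precisely when $p>\frac{2}{1-2(\lambda+\mu)}$ --- the same threshold the paper imposes. What your route buys is self-containedness: only Lemma~\ref{lem:A_analytic} and standard Hilbert-space stochastic calculus are needed, with no appeal to UMD spaces or $\gamma$-radonifying operators. What the paper's route buys is brevity, and a formulation that would survive in a Banach-space setting where Hilbert--Schmidt norms are no longer the appropriate objects.
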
 
\begin{proof}
By H\"older's inequality it suffices to prove that~\eqref{eq:reg_stochconv_mixed_unbalanced} holds for $p$ sufficiently large, which can be obtained from the Da Prato--Kwapie\`n--Zabczyk 
factorization method~\cite{DaPratoKwapienZabczyk:1987}. 
More specifically, assume that $p> \frac{2}{1-2(\lambda+\mu)}$ and apply~\cite[Proposition 4.2]{NeervenVeraarWeis:2008} with $\alpha \in (\frac{1}{p}, \frac{1-2(\lambda +\mu)}{2})$ (e.g., take $\alpha$ to be the midpoint of this interval), $\lambda =\mu$, $\eta = \lambda$, 
$\theta = 0$, $E=H=L^2$, $p > \frac{2}{1-2(\lambda+\gamma)}$, and $\Phi \equiv P_M \sqrt{Q}$. Note that in reference  \cite{NeervenVeraarWeis:2008} the stochastic integral is defined in terms of 
an $H$-cylindrical Brownian motion, that $E_0=E=L^2$,  that $E_\eta = D((-A)^\eta)$, 
that since $E_0$ is a Hilbert space (and hence a UMD space, see e.g.~\cite[Proposition 4.2.14]{HytonenNeervenVeraarWeis:2016}), the set $\gamma(L^2(0,t;H), E_{0})$ of radonifying operators from $L^2(0,t;H)$ to $E_0$
coincides with $L^2(0,t ; {\mathcal L}_2(L^2))$  
by~\cite[pages~258-259]{HytonenNeervenVeraarWeis:2017}, and finally that $\| \Phi \|_{\mathcal{L}_2(L^2)} \leq (\Tr(Q))^{1/2}$.
\end{proof}

Let us also introduce the Green function $G:(0,\infty)\times[0,1]^2\to\R$ associated with the operator $\partial_t - A$, which is given by
\[
G(t;y,z) =\sum_{n\in \N} e^{-n^2\pi^2 t} \sin(n\pi y) \sin(n\pi z),\quad \forall~t\in(0,\infty),~\forall~y,z\in[0,1].
\]
Recall the following heat kernel estimates from e.g.~\cite[p. 93]{Gyongy-Rovira},  \cite[Theorem VI.23, p.221]{Eidelman-Zhitarashu} and the references therein: for any integer $k\in \{0,1,2\}$,
 there exist 
 $C_k,b_k\in (0,\infty)$ such that one has
\begin{align}
\left|\tfrac{\partial^k G}{\partial z^k }(t; y,z)\right| \leq& \;  C_k  t^{-\frac{k+1}{2}} \;  \exp\Big( - b_k  \frac{|y-z|^2}{t}\Big),	 \quad \forall t\in (0,\infty), \; \forall y,z\in [0,1]. \label{upper_H}
\end{align}

The Green function is employed to obtain from the mild formulation~\eqref{eq:mildGalsolBurgers} the following expression: 
for all $t\ge 0$ and all $z\in[0,1]$
\begin{align} 	\label{eq:mildGalsolBurgers_conv}
X_M(t,z) =& e^{tA}  P_M X_0(z)  + \int_0^t \int_0^1 G(t-s;z,y) \bigl(P_M \nabla (X_M^2(s,\cdot)\bigr)(y) \,dy \,ds+ I_M(t)(z).
\end{align}
\subsection{Proof of Lemma~\ref{lem:momLinfty}}

\begin{proof}
For all $t\ge 0$ and all $z\in[0,1]$ let 
\begin{align}\label{eq:JXM}
 J(X_M)(t,z)= \int_0^t \int_0^1 G(t-s;z,y) (P_M \nabla  X_M^2(s))(y) \,dy \,ds.
\end{align}

Owing to the mild formulation~\eqref{eq:mildGalsolBurgers_conv} and to the definition~\eqref{eq:stochconv} of the stochastic convolution $I_M$, we  obtain
\begin{align}\label{eq:sup_norm_error_split}
 \underset{(t,z) \in [0,T]\times [0,1]}\sup~ |X_M(t)(z)| 
  \leq &\underset{t\in [0,T]}\sup~\| e^{t A}P_M X_0 \|_{L^{\infty}}
 + \underset{t \in [0,T]}\sup~ \| I_M(t)\|_{L^{\infty}} \nonumber \\
&\; + \underset{(t,z)\in [0,T]\times [0,1]}\sup~ |J(X_M)(t,z)|.
\end{align}
Applying the Sobolev inequality~\eqref{eq:Linfty_DA_bound} and using the condition $\alpha>1/4$, there exists $C_\alpha\in(0,\infty)$ such that 
\begin{equation}\label{eq:IVLinftybound}
\underset{t\in [0,T]}\sup~\| e^{t A}P_M X_0 \|_{L^{\infty}}\le C_\alpha\underset{t\in [0,T]}\sup~\|(-A)^{\alpha}e^{t A}P_M X_0 \|_{L^{2}}\le C_\alpha\|(-A)^{\alpha}X_0 \|_{L^{2}}.
\end{equation}
Moreover, owing again to the the Sobolev inequality~\eqref{eq:Linfty_DA_bound}, if $\lambda\in(\frac{1}{4},\infty)$ and $\mu \in (0,\infty)$, then there exists $C_{\lambda,\mu}\in(0,\infty)$ such that one has
\begin{equation}        \label{A.12Bis}
\underset{t \in [0,T]}\sup~ \| I_M(t)\|_{L^{\infty}}\le C_{\lambda,\mu}\|(-A)^{\lambda} I_M\|_{C^{\mu}([0,T],L^2)}.
\end{equation}
Thus, in view of Lemma~\ref{lem:reg_stochconv}, all that remains is to prove upper bounds for $J(X_M)(t,z)$ defined by~\eqref{eq:JXM}. First, since $P_M$ is a self-adjoint operator, one obtains 
\begin{align*}
    J(X_M)(t,z)
    &=\int_{0}^{t}\langle G(t-s;z,\cdot),P_M\nabla X_M^2(s)\rangle_{L^2}\,ds 
    =-\int_{0}^{t}\langle \nabla P_MG(t-s;z,\cdot),X_M(s,\cdot)^2\rangle_{L^2}  \,ds.
\end{align*}
Let $\tilde{p}=\frac{3p}{4}\in [2,\infty)$
and $\tilde{q}=\frac{\tilde{p}}{\tilde{p}-1}$ be conjugate exponents, where $p\in[\frac83,\infty)$ is given in the statement of Lemma~\ref{lem:momLinfty}. Since $L^\infty \subseteq L^{\tilde{q}}$, one has
\[
\|X_M^2(s,.)\|_{L^{\tilde{q}}}\le \|X_M(s,.)\|_{L^{\infty}}^{\frac12}\|X_M^2(s,.)\|_{L^{\tilde{q}}}^{\frac34},
\]
and applying H\"older's inequality 
we deduce
\begin{align}
    |J(X_M)(t,z)|&\le \ \int_0^t \| \nabla P_M G(t-s; z,\cdot )\|_{L^{\tilde{p}}} \|X_M^2(s,.)\|_{L^{\tilde{q}}}  \,ds \nonumber\\
    &\le \ \underset{(s,y)\in[0,t]\times[0,1]}\sup~|X_M(s,y)|^{\frac12}\int_0^t \| \nabla P_M G(t-s; z,\cdot )\|_{L^{\tilde{p}}} \|X_M^2(s,.)\|_{L^{\tilde{q}}}^{\frac34}  \,ds \label{Holder-J(X_M)}.
\end{align}
On the
 one hand, 
 the Gagliardo--Nirenberg inequality~\eqref{eq:GN} yields
\begin{align*} 
\| \nabla P_M G(t-s; z, \cdot)\|_{L^{\tilde{p}}} &\leq C  \| \nabla P_M G(t-s;z, \cdot)\|_{L^2}^{1/2+1/{\tilde{p}}}~\| \nabla^2 P_M G(t-s;z, \cdot)\|_{L^2}^{1/2-1/{\tilde{p}}} \\
& \quad + C \| \nabla P_M G(t-s;z, \cdot)\|_{L^2}.
\end{align*}
Observe that $G(t,z,\cdot) \in W^{1,2}_0\cap W^{2,2}$ for all $t\in (0,\infty)$ and all $z\in [0,1]$, and note that $\nabla^2 P_M = P_M \nabla^{2}$ on $W^{1,2}_0\cap W^{2,2}$ owing to~\eqref{eq:PMnabla} and~\eqref{eq:nablaPM}.  Using the identity~\eqref{eq:nablaPM}, the fact that $P_M$ and $Q_M$ are $L^2$-orthogonal projections, 
and the heat kernel estimates~\eqref{upper_H} with $k=1,2$, as a result there exists $C\in (0,\infty)$ such that for all $t>s\ge 0$ and $z\in[0,1]$ 
\begin{align} 	\label{upp_GN_1}
& \underset{M\in\N}\sup~ \| P_M \nabla G(t-s; z, \cdot)\|_{L^{\tilde{p}}} \nonumber \\
&\qquad \leq  C\left( \|\nabla G(t-s;z, \cdot)\|_{L^2}^{1/2+1/{\tilde{p}}} ~ \| \nabla^2 G(t-s; z, \cdot)\|_{L^2}^{1/2-1/{\tilde{p}}} 
+ \| \nabla G(t-s;z, \cdot)\|_{L^2}\right)   \nonumber \\
& \qquad \leq C \; (t-s)^{\frac{1}{2{\tilde{p}}}-\frac{5}{4}}~\Big\| \exp\Big( -b_1 \frac{|z-\cdot |^2}{t-s} \Big) \Big\|_{L^2}^{1/2 +1/{\tilde{p}}} \quad \Big\| \exp\Big( -b_2 \frac{|z-\cdot |^2}{t-s} \Big) \Big\|_{L^2}^{1/2-1/{\tilde{p}}}    \nonumber \\
& \qquad \quad + C\; (t-s)^{-1}~\Big\| \exp\Big( -b_1 \frac{|z-\cdot |^2}{t-s} \Big) \Big\|_{L^2} \nonumber\\
& \qquad \leq  C \bigl( (t-s)^{\frac{1}{2\tilde{p}}-1} + (t-s)^{-\frac{3}{4}}\bigr) \leq  C(1+T^{\frac{1}{4}-\frac{1}{2\tilde{p}}}) (t-s)^{\frac{1}{2\tilde{p}}-1} . 
\end{align}

On the other hand, applying again the Gagliardo--Nirenberg inequality~\eqref{eq:GN} and the Poincar\'e inequality (noting that $X_M$ takes values in $H_M\subseteq W^{1,2}_0$) and using the condition $\frac{1}{{\tilde{p}}}+\frac{1}{{\tilde{q}}}=1$,  we deduce the existence of 
$C\in(0,\infty)$ such that 
\begin{align}		\label{upp_GN_2}
 \|X_M^2(s,.)\|_{L^{\tilde{q}}} = \|X_M(s,.)\|^2_{L^{2{\tilde{q}}}(0,1)} \leq & \;  C \|X_M(s,.) \|_{L^2}^{2-1/{\tilde{p}}}~\| \nabla X_M(s,.)\|_{L^2}^{1/{\tilde{p}}}, \quad \forall s\geq 0.
\end{align}
Define for all $t\ge 0$
\begin{equation*}\label{eq:calJest1}
{\mathcal J(X_M)}(t) =   \int_0^t (t-s)^{-(1-\frac{1}{2{\tilde{p}}})}   \|X_M(s)\|_{L^2}^{3/2-3/(4{\tilde{p}})} \|\nabla X_M(s)\|_{L^2}^{3/(4{\tilde{p}})}\,ds.
\end{equation*}
Plugging the upper bounds~\eqref{upp_GN_1} and~\eqref{upp_GN_2} in the inequality~\eqref{Holder-J(X_M)}, and applying Young's inequality, there exists $C(T)\in(0,\infty)$ such that
\begin{align} 		
|J(X_M)(t,z)| &\leq 
C(T) \underset{(s,y)\in [0,t]\times [0,1]}\sup~|X_M (s)(y)|^{1/2} \;  {\mathcal J}(X_M)(t)
\notag \\ 
\label{upper-nonlin}& \leq 
\tfrac{1}{2} \underset{(s,y)\in [0,t]\times [0,1]}\sup~|X_M (s)(y)| 
+ \tfrac{(C(T))^2}{2} |{\mathcal J}(X_M)(t)|^{2}
, 
\end{align} 
Applying H\"older's inequality with conjugate exponents $\frac{8{\tilde{p}}}{8{\tilde{p}}-3}$ and $\frac{8{\tilde{p}}}{3}$ then yields
\begin{equation}\label{eq:calJest2}\begin{aligned}
 {\mathcal J}(X_M)(t) &\leq  \Big( \int_0^t  (t-s)^{-1 + 1/(8{\tilde{p}}-3) }
 \,ds \Big)^{1-3/(8{\tilde{p}})}  
\Big( \int_0^t \|X_M(s)\|_{L^2}^{4{\tilde{p}}-2} \|\nabla X_M(s)\|_{L^2}^2 \,ds \Big)^{3/(8{\tilde{p}})}
\\ & \leq  (8{\tilde{p}}-3)^{1-3/(8\tilde{p})}    
T^{1/(8{\tilde{p}})}\Big( \int_0^t  \|X_M\|_{L^2}^{4{\tilde{p}}-2} \|\nabla X_M(s)\|_{L^2}^2 \,ds \Big)^{3/(8{\tilde{p}})}.
\end{aligned}
\end{equation}
 The process $X_M$ solves a finite-dimensional stochastic differential equation with values in $H_M\subseteq D(A)$; therefore, $\underset{(t,z) \in [0,T]\times [0,1]}\sup~|X_M(t,z)| < \infty$ 
 almost surely. Thus, combining~\eqref{eq:sup_norm_error_split}, \eqref{eq:IVLinftybound}, \eqref{upper-nonlin}, and~\eqref{eq:calJest2}, we deduce that there exists  $C_{\tilde{p}}(T)\in (0,\infty)$ such that
\begin{align*}
\underset{(t,z) \in [0,T]\times [0,1]}\sup~ |X_M(t,z)| 
 & \leq 
 C_\alpha \| (-A)^{\alpha} X_0 \|_{L^{2}}
 + 2\underset{t \in [0,T]}\sup~ \| I_M(t) \|_{L^{\infty}}
 \\ & \quad +  C_{\tilde{p}}(T) \Big( \int_0^T \|X_M(s)\|_{L^2}^{4{\tilde{p}}-2} \|\nabla X_M(s)\|_{L^2}^2  \,ds \Big)^{3/(4{\tilde{p}})}. 
\end{align*} 
Raising both sides of the above equation to the power $p=\frac{4\tilde{p}}{3}$ and taking expectation, one obtains the moment bounds~\eqref{mom_X_infty} by employing Lemma~\ref{lem:expest}, 
Lemma~\ref{lem:reg_stochconv}  and the inequality \eqref{A.12Bis}.
 The proof of Lemma~\ref{lem:momLinfty} is thus completed.
\end{proof}

\subsection{Proof of Lemma~\ref{lem:X_sup_moments}}

\begin{proof}  
Fix $\lambda, \gamma \in (0,\frac{1}{2})$,  $M\in \N$ and observe that as 
$\lambda + \gamma < \alpha$ is a strict inequality we can assume (with a slightly larger $\lambda$ if needed)
 that $\lambda\neq \frac{1}{4}$; this allows us to apply Proposition~\ref{prop:DA_Sobolev_equiv}.
 Recall the mild representations~\eqref{eq:mildGalsolBurgers} and~\eqref{eq:mildGalsolBurgers_conv}, and  note that $X_M\in D(A)\subseteq W^{1,2}_{0}$ a.s.\ by construction. 
 Thus, we only need to prove that~\eqref{eq:XM_moment_mixed_reg} holds.
By Proposition~\ref{prop:DA_Sobolev_equiv}, Lemma~\ref{lem:A_analytic}, the  estimate~\eqref{eq:A_fracpownorminc}, and the inequalities $\| e^{tA}\|_{\mathcal{L}(L^2)} \leq 1$ and
 $\| P_M \|_{\mathcal{L}(L^2)} = 1$ one has
\begin{equation}
\begin{aligned}
&\| (-A)^{\lambda}(e^{t A}P_M X_0 - e^{sA} P_M X_0 )\|_{L^2}  
=\| (-A)^{\lambda} (e^{(t -s)A} - I) e^{sA} P_M X_0 \|_{L^2} 
\\ 
&\quad  \leq  \| (-A)^{-\gamma} (e^{(t-s)A} - I ) e^{s A} P_M (-A)^{\lambda+\gamma} X_0 \|_{L^2}
\leq(t-s)^{\gamma} \| (-A)^{\alpha} X_0 \|_{L^2},
\end{aligned}
\end{equation}
for all $0\leq s < t < \infty$ and all $M\in \N$.
This proves  
\begin{equation}\label{eq:init_value_reg}
    \underset{M\in \N}\sup~ \E \bigl[ \| t\mapsto  (-A)^{\lambda} e^{t A}P_M X_0  \|_{C^{\gamma}([0,T],L^2) }^{p} \bigr] 
    \leq  \E \bigl[ \| (-A)^{\alpha} X_0 \|_{L^2}^p\bigr].
\end{equation}
The required regularity of the stochastic convolution has been obtained in Lemma~\ref{lem:reg_stochconv}. 
Hence it only remains  to prove that the deterministic convolution in~\eqref{eq:mildGalsolBurgers} satisfies the desired regularity result. To this end, 
first of all note that by Lemma~\ref{lem:A_analytic}, 
 Lemma~\ref{lem:deriv_bdd_fracdomains}, and the identity $(-A)^{-1/2}P_M = P_M (-A)^{-1/2}$, there exists $C_{\delta,\lambda}\in (0,\infty)$ such that
\begin{align*}
\| (-A)^{\delta+\lambda} e^{tA} P_M \nabla  v \|_{L^2}
& \leq \| (-A)^{\delta+\lambda + 1/2} e^{tA} \|_{\mathcal{L}(L^2)} 
\| (-A)^{-\frac{1}{2} }\nabla \|_{\mathcal{L}(L^2)} 
\| v \|_{L^2}
 \\ &\leq C_{\delta,\lambda} t^{-(\delta + \lambda+1/2)} \| v \|_{L^2}
\end{align*}
for all $\delta\geq 0$, $t\in (0,\infty)$ and $v\in L^2$.
It then follows from~\cite[Proposition 3.6]{CoxHausenblas:2013} 
with $Y_1=Y_2=L^2$, $\Psi(t)= (-A)^{\lambda} e^{tA} P_M\nabla $, $\theta=1-\gamma$, and $g(t) = C t^{-(\lambda+\gamma+1/2)}$ that there exists $C_{\gamma,\lambda}(T)\in(0,\infty)$ such that for any $\Phi\in L^{\infty}([0,T],L^2)$ one has
\begin{align*}
\underset{M\in \N}\sup~ \left\| 
 t\mapsto(-A)^{\lambda}\int_0^{t} e^{(t-s)A}P_M \nabla \Phi(s) \,ds
\right\|_{C^{\gamma}([0,T],L^2)}
&
\leq C_{\gamma,\lambda}(T) \| \Phi \|_{L^{\infty}([0,T],L^2)}.
\end{align*}
This implies
\begin{equation}
\underset{M\in \N}\sup~ 
\E \left[ \left\| 
 t\mapsto (-A)^{\lambda} \int_0^{t} e^{(t-s)A} P_M \nabla X^2_M(s) \,ds
\right\|_{C^{\gamma}([0,T],L^2)}^{p}\right]
\leq C_{\gamma,\lambda}(T) \underset{M\in \N}\sup~ \E\bigl[\| X_M \|_{L^{\infty}([0,T],L^4)}^{2p}\bigr].
\end{equation}
Using~\eqref{eq:mildGalsolBurgers} and the identity  $P_M \nabla X_M^2 = B_M(X_M)$, we deduce that the proof reduces to check that 
\[
\underset{M\in \N}\sup~\E \bigl[\| X_M \|_{L^{\infty}([0,T],L^4)}^{2p} \bigr] \leq C \Bigl(1+\E\bigl[\| (-A)^\alpha X_0 \|_{L^2}^{2p} \bigr]+  \E\bigl[ \| X_0 \|_{L^2}^{6p}\bigr] \Bigr).
\]
Since $\|\cdot\|_{L^4}\le \|\cdot\|_{L^\infty}$, this is a straightforward consequence of Lemma~\ref{lem:momLinfty}. The proof of Lemma~\ref{lem:X_sup_moments} is thus completed. 

\end{proof}

\section{Proof of Lemma~\ref{lemma:Kolmogorov_randomIV}}\label{app:Kolmogorov}

\begin{proof}[Proof of Lemma~\ref{lemma:Kolmogorov_randomIV}]
For $n\in \N$ let $\psi_n \in C^2(\R,\R)$ be a monotone increasing function with bounded first and second derivative satisfying 
\begin{equation*}
\psi_n(x)
=
\begin{cases}
-2n, & x \in (-\infty,-2n];\\
x, & x\in [-n,n];\\
2n, & x\in [2n,\infty),
\end{cases}
\end{equation*}
and define $\varphi_n = \psi_n \circ \varphi$. The monotone convergence theorem for the conditional expectation implies that it suffices to verify~\eqref{eq:Kolmogorov_randomIV} for $\varphi_n$, i.e., we can from now on assume without loss of generality that $\varphi$ is bounded.\par 
Recall that $X_M^x$ denotes the solution to~\eqref{eq:GalsolBurgers} with initial value $x \in L^2$. If $X_0$ is a simple $\cF_0$-measurable $L^2$-valued random variable, say, $X_0 =\sum_{k=1}^{n}x_k \mathds{1}_{A_k}$ with $x_1,\ldots,x_n\in L^2$ and $A_1,\ldots,A_n\in \cF_0$, then it follows from the uniqueness of the solution (see Section~\ref{ssec:setting}) that $X_M(t) =\sum_{k=1}^{n}X^{x_k}_M(t) \mathds{1}_{A_k}$ for all $M\in \N$ and all $t\in [0,T]$. The identity~\eqref{eq:Kolmogorov_randomIV} then follows from the definition of $u_M$ (see~\eqref{eq:Kolmogorov}) and the fact that $X_M^{x_k}$ is independent of $\cF_0$ for all $k\in \{1,\ldots,n\}$.\par 
Next, recall that $(h_j)_{j\in \N}$ is an orthonormal basis for $L^2$ and define
\begin{equation}
    X_{0,n} = 
    \sum_{j=1}^{n}
    \sum_{k=-2^{-2n}}^{k=2^{2n}}
     k2^{-n} h_j 
     \mathds{1}_{\{\langle X_0, h_j\rangle_{L^2} \in   [k2^{-n}, (k+1)2^{-n}) \}},\quad n\in \N.
\end{equation}
Note that $(X_{0,n})_{n\in \N}$ is a sequence of simple $\sigma(X_0)$-measurable $L^2$-valued random variables such that $\lim_{n\rightarrow \infty} \| X_{0,n} - X_{0} \|_{L^2} =0$ a.s. Moreover, $\sigma(X_{0,n})\subseteq \sigma(X_{0,n+1})$ and $\sigma(X_0)=\sigma(\{X_{0,n}\colon n\in\N\})$. Let $X_{M,n}$ denote the solution to~\eqref{eq:GalsolBurgers} with initial value $P_M X_{0,n}$ ($M,n\in \N$). Thanks to Doob's martingale convergence theorem one has, for all $p\in (1,\infty)$ and all $\xi\in L^p(\Omega)$, that
\begin{equation}\label{eq:convcondexp}
\lim_{n\rightarrow \infty} \E[ \xi | X_{0,n}] = \E [ \xi| X_0]\quad \text{a.s. and in $L^p$}.
\end{equation}
Note that for all $n,M\in \N$ and all $t\ge 0$ one has
\begin{equation*}
\begin{aligned}
    & X_M(t) - X_{M,n}(t)
    \\ &\qquad 
    =
    P_M(X_0 - X_{0,n})
    +
    \int_{0}^{t}
        \bigl[ 
            A(X_M(t) - X_{M,n}(t))
            +
            B_M(X_M(t)) - B_M(X_{M,n}(t))
        \bigr]
    \,dt.
\end{aligned}
\end{equation*}
It follows from~\cite[Remark 3.1]{LiuRockner:2010} that there exists a constant $C\in (0,\infty)$ such that one has, for all $M\in \N$ and all $x,y\in H_M$,
\begin{equation*}
\begin{aligned}\label{eq:Burgerslocmono}
\langle 
    A(x-y) + B_M(x) - B_M(y),
    x-y
\rangle_{L^2}
&=
\langle 
    A(x-y) + B(x) - B(y),
    x-y
\rangle_{L^2} \\
& \leq 
    - \tfrac{3}{4}\| \nabla(x - y) \|_{L^2}
    +
    C \| x \|_{L^4}^4
    \| x - y \|_{L^2}^2.
\end{aligned}
\end{equation*}
This, the Poincar\'e inequality~\eqref{eq:poincare}, and the Gagliardo-Nirenberg inequality~\eqref{eq:GN} (recalling that $X_M$ takes values in $H_M\subseteq W^{1,2}_0$) imply that there exists $C\in (0,\infty)$ (possibly changing values from line to line) such that one has
\begin{align*}
&\| X_M(t) - X_{M,n}(t) \|_{L^2}^2
 = \| P_M (X_{0} - X_{0,n}) \|_{L^2}^2 
\\ & \quad 
+  2\int_0^t 
    \langle 
        X_M(s) - X_{M,n}(s)
        ,  
        A(X_M(t) - X_{M,n}(t))
        +
        B_M(X_{M}(t)) - B_M(X_{M,n}(t))
    \rangle_{L^2}\,ds
\\ & \leq 
\| X_{0} - X_{0,n}\|_{L^2}^2  +
C\int_0^{t} \| X_M(s) \|_{L^4}^4 
\| X_M(s) - X_{M,n}(s) \|_{L^2}^2 \,ds 
\\ & \leq 
\| X_{0} - X_{0,n} \|_{L^2}^2  +
C\int_0^{t}  \| X_M(s)\|_{L^2}^2 \| \nabla X_M(s) \|_{L^2}^2 
\| X_M(s) - X_{M,n}(s) \|_{L^2}^2 \,ds 
\end{align*}
for all $n,M\in \N$ and all $t\in [0,T]$.
Gronwall's lemma hence implies 
\begin{align*}
\| X_M(t) - X_{M,n}(t) \|_{L^2}^2
\leq \| X_{0} - X_{0,n} \|_{L^2}^2 \exp\left( C
\int_0^t \| X_M(s) \|_{L^2}^2 \| \nabla X_M(s) \|_{L^2}^2  \,ds \right)
\end{align*}
for all $t\in [0,T]$, $M\in \N$.
This, the fact that 
$
    \lim_{n\rightarrow \infty}
    \| X_0 - X_{0,n} \|_{L^2}=0$ a.s., 
and inequality~\eqref{pmoments} from Lemma~\ref{lem:expest} imply that 
$\lim_{n\rightarrow \infty}
\| X_M(t) - X_{M,n}(t) \|_{L^2}=0$ a.s.\ for all $t\in [0,T]$ and all $M\in \N$.
As the mapping $\varphi$ is assumed to be bounded, the dominated convergence theorem implies that 
\begin{equation}
\lim_{n\rightarrow \infty} \E [ | \E[ \varphi(X_{M,n}(t)) - \varphi(X_{M}(t))| X_{0,n} ] | ] \leq 
\lim_{n\rightarrow \infty} \E [ |  \varphi(X_{M}(t)) - \varphi(X_{M,n}(t)) | ] =0
\end{equation}
for all $t\in [0,T]$ and all $M\in \N$. Thus we can extract a subsequence $(n_k)_{k\in \N}$
such that 
\begin{equation}
\lim_{n\rightarrow \infty}   \E[ \varphi(X_{M,n_k}(t)) - \varphi(X_{M}(t))| X_{0,n} ] 
= 0 \quad \text{a.s.}
\end{equation}
This combined with~\eqref{eq:convcondexp} implies that one has
\begin{equation}
\lim_{k\rightarrow \infty} \E\big[ \varphi(X_{M,n_k}(t)) | X_{0,n_k} \big] 
= \E[ \varphi(X_{M}(t)) | X_{0} ] \quad \text{a.s.}
\end{equation}
for all $t\in [0,T]$, $M\in \N$.
This, the fact that we verified~\eqref{eq:Kolmogorov_randomIV} for simple random variables, and the fact that 
$u_M(t,\cdot)$ is continuous completes the proof of~\eqref{eq:Kolmogorov_randomIV}.
\end{proof}

\bigskip
\section*{Acknowledgements}
The authors would like to thank David Nualart and Arnulf Jentzen for helpful comments.

The work of Sonja Cox is supported by the NWO grant VI.Vidi.213.070. The work of Charles-Edouard Br\'ehier is partially supported by the SIMALIN (ANR-19-CE40-0016) project operated by the French National Research Agency. 
 Annie~Millet's research has been conducted within the FP2M federation (CNRS FR 2036).

\appendix


\begin{thebibliography}{10}

\bibitem{AlabertGyongy:2006}
Aureli Alabert and Istv\'an Gy\"ongy.
\newblock On numerical approximation of stochastic {B}urgers' equation.
\newblock In {\em From stochastic calculus to mathematical finance}, pages
  1--15. Springer, Berlin, 2006.

\bibitem{AnderssonEtAl:2019}
Adam Andersson, Mario Hefter, Arnulf Jentzen, and Ryan Kurniawan.
\newblock Regularity properties for solutions of infinite dimensional
  {K}olmogorov equations in {H}ilbert spaces.
\newblock {\em Potential Analysis}, 50:347--379, 2019.

\bibitem{BlomkerJentzen:2013}
Dirk Bl\"omker and Arnulf Jentzen.
\newblock Galerkin approximations for the stochastic {B}urgers equation.
\newblock {\em SIAM J. Numer. Anal.}, 51(1):694--715, 2013.

\bibitem{MR4454935}
S\'ebastien Boyaval, Sofiane Martel, and Julien Reygner.
\newblock Finite-volume approximation of the invariant measure of a viscous
  stochastic scalar conservation law.
\newblock {\em IMA J. Numer. Anal.}, 42(3):2710--2770, 2022.

\bibitem{BrehierCuiWang:2022}
Charles-Edouard Br\'{e}hier, Jianbo Cui, and Xiaojie Wang.
\newblock Weak error estimates of fully-discrete schemes for the stochastic
  {C}ahn–{H}illiard equation.
\newblock {\em arXiv preprint arXiv:2207.09266}, 2022.

\bibitem{MR3862147}
Charles-Edouard Br\'{e}hier and Arnaud Debussche.
\newblock Kolmogorov equations and weak order analysis for {SPDE}s with
  nonlinear diffusion coefficient.
\newblock {\em J. Math. Pures Appl. (9)}, 119:193--254, 2018.

\bibitem{MR4132896}
Charles-Edouard Br\'{e}hier and Ludovic Gouden\`{e}ge.
\newblock Weak convergence rates of splitting schemes for the stochastic
  {A}llen-{C}ahn equation.
\newblock {\em BIT}, 60(3):543--582, 2020.

\bibitem{Calderon:1964}
Alberto Calder\'on.
\newblock Intermediate spaces and interpolation, the complex method.
\newblock {\em Studia Math.}, 24:113--190, 1964.

\bibitem{ConusJentzenKurniawan:2019}
Daniel Conus, Arnulf Jentzen, and Ryan Kurniawan.
\newblock Weak convergence rates of spectral {G}alerkin approximations for
  {SPDE}s with nonlinear diffusion coefficients.
\newblock {\em The Annals of Applied Probability}, 29(2):653--716, 2019.

\bibitem{CoxHausenblas:2013}
Sonja Cox and Erika Hausenblas.
\newblock A perturbation result for semi-linear stochastic differential
  equations in {UMD} {B}anach spaces.
\newblock {\em Journal of evolution equations}, 13:795--827, 2013.

\bibitem{CoxHutzenthalerJentzen:2024}
Sonja Cox, Martin Hutzenthaler, and Arnulf Jentzen.
\newblock {\em Local {L}ipschitz continuity in the initial value and strong
  completeness for nonlinear stochastic differential equations}, volume 296.
\newblock American Mathematical Society, 2024.

\bibitem{MR3984308}
Jianbo Cui and Jialin Hong.
\newblock Strong and weak convergence rates of a spatial approximation for
  stochastic partial differential equation with one-sided {L}ipschitz
  coefficient.
\newblock {\em SIAM J. Numer. Anal.}, 57(4):1815--1841, 2019.

\bibitem{MR1722786}
Giuseppe Da~Prato and Arnaud Debussche.
\newblock Differentiability of the transition semigroup of the stochastic
  {B}urgers equation, and application to the corresponding {H}amilton-{J}acobi
  equation.
\newblock {\em Atti Accad. Naz. Lincei Cl. Sci. Fis. Mat. Natur. Rend. Lincei
  (9) Mat. Appl.}, 9(4):267--277, 1998.

\bibitem{DaPratoZabczyk:1992}
Giuseppe Da~Prato and Jerzy Zabczyk.
\newblock {\em Stochastic equations in infinite dimensions}, volume~44 of {\em
  Encyclopedia of Mathematics and its Applications}.
\newblock Cambridge University Press, Cambridge, 1992.

\bibitem{DaPratoZabczyk:2002}
Giuseppe Da~Prato and Jerzy Zabczyk.
\newblock {\em Second order partial differential equations in Hilbert spaces},
  volume 293.
\newblock Cambridge University Press, 2002.

\bibitem{DaPratoKwapienZabczyk:1987}
Guiseppe Da~Prato, Stanis{\l}aw Kwapie\'{n}, and Jerzy Zabczyk.
\newblock Regularity of solutions of linear stochastic equations in {H}ilbert
  spaces.
\newblock {\em Stochastics}, 23(1):1--23, 1987.

\bibitem{Debussche:2011}
Arnaud Debussche.
\newblock Weak approximation of stochastic partial differential equations: the
  nonlinear case.
\newblock {\em Math. Comp.}, 80(273):89--117, 2011.

\bibitem{NezzaEtAl:2012}
Eleonora Di~Nezza, Giampiero Palatucci, and Enrico Valdinoci.
\newblock Hitchhiker{'}s guide to the fractional {S}obolev spaces.
\newblock {\em Bulletin des sciences math{\'e}matiques}, 136(5):521--573, 2012.

\bibitem{Eidelman-Zhitarashu}
Samuil Eidelman and Nicolae Zhitarashu.
\newblock {\em Parabolic boundary value problems}, volume 101 of {\em Operator
  Theory: Advances and Applications}.
\newblock Birkh\"{a}user Verlag, Basel, 1998.

\bibitem{Gyongy-Rovira}
Istv\'{a}n Gy\"{o}ngy and Carles Rovira.
\newblock On {$L^p$}-solutions of semilinear stochastic partial differential
  equations.
\newblock {\em Stochastic Process. Appl.}, 90(1):83--108, 2000.

\bibitem{MR2860933}
Martin Hairer and Jochen Voss.
\newblock Approximations to the stochastic {B}urgers equation.
\newblock {\em J. Nonlinear Sci.}, 21(6):897--920, 2011.

\bibitem{HutzenthalerJentzen:2020}
Martin Hutzenthaler and Arnulf Jentzen.
\newblock On a perturbation theory and on strong convergence rates for
  stochastic ordinary and partial differential equations with nonglobally
  monotone coefficients.
\newblock {\em Ann. Probab.}, 48(1):53--93, 2020.

\bibitem{HutzenthalerEtAl:2019}
Martin Hutzenthaler, Arnulf Jentzen, Felix Lindner, and Primo{\v{z}}
  Pu{\v{s}}nik.
\newblock Strong convergence rates on the whole probability space for
  space-time discrete numerical approximation schemes for stochastic {B}urgers
  equations.
\newblock {\em arXiv preprint arXiv:1911.01870}, 2019.

\bibitem{HutzenthalerLink:2022}
Martin Hutzenthaler and Robert Link.
\newblock Strong convergence rates for full-discrete approximations of
  stochastic {B}urgers equations with multiplicative noise.
\newblock {\em arXiv preprint arXiv:2210.17536}, 2022.

\bibitem{HytonenNeervenVeraarWeis:2016}
Tuomas Hyt\"onen, Jan van Neerven, Mark Veraar, and Lutz Weis.
\newblock {\em Analysis in {B}anach spaces. {V}ol. {I}. {M}artingales and
  {L}ittlewood-{P}aley theory}, volume~63 of {\em A Series of Modern Surveys in
  Mathematics}.
\newblock Springer, Cham, 2016.

\bibitem{HytonenNeervenVeraarWeis:2017}
Tuomas Hyt\"{o}nen, Jan van Neerven, Mark Veraar, and Lutz Weis.
\newblock {\em Analysis in {B}anach spaces. {V}ol. {II}}, volume~67 of {\em A
  Series of Modern Surveys in Mathematics}.
\newblock Springer, Cham, 2017.

\bibitem{JentzenSalimovaWelti:2019}
Arnulf Jentzen, Diyora Salimova, and Timo Welti.
\newblock Strong convergence for explicit space-time discrete numerical
  approximation methods for stochastic {B}urgers equations.
\newblock {\em J. Math. Anal. Appl.}, 469(2):661--704, 2019.

\bibitem{KloedenPlaten:1992}
Peter Kloeden and Eckhard Platen.
\newblock {\em Stochastic differential equations}.
\newblock Springer, 1992.

\bibitem{KunstmannWeis:2004}
Peer Kunstmann and Lutz Weis.
\newblock Maximal {$L_p$}-regularity for parabolic equations, {F}ourier
  multiplier theorems and {$H^\infty$}-functional calculus.
\newblock In {\em Functional analytic methods for evolution equations}, volume
  1855 of {\em Lecture Notes in Math.}, pages 65--311. Springer, Berlin, 2004.

\bibitem{LindemulderVeraar:2020}
Nick Lindemulder and Mark Veraar.
\newblock The heat equation with rough boundary conditions and holomorphic
  functional calculus.
\newblock {\em J. Differential Equations}, 269(7):5832--5899, 2020.

\bibitem{LiuRockner:2010}
Wei Liu and Michael R\"ockner.
\newblock S{PDE} in {H}ilbert space with locally monotone coefficients.
\newblock {\em J. Funct. Anal.}, 259(11):2902--2922, 2010.

\bibitem{Lunardi:2018}
Alessandra Lunardi.
\newblock {\em Interpolation theory}, volume~16.
\newblock Springer, 2018.

\bibitem{MazzonettoSalimova:2020}
Sara Mazzonetto and Diyora Salimova.
\newblock Existence, uniqueness, and numerical approximations for stochastic
  {B}urgers equations.
\newblock {\em Stoch. Anal. Appl.}, 38(4):623--646, 2020.

\bibitem{MullerGronbachRitter:2007}
Thomas Muller-Gronbach and Klaus Ritter.
\newblock Lower bounds and nonuniform time discretization for approximation of
  stochastic heat equations.
\newblock {\em Foundations of Computational Mathematics}, 7:135--181, 2007.

\bibitem{Pazy:1983}
Amnon Pazy.
\newblock {\em Semigroups of linear operators and applications to partial
  differential equations}, volume~44 of {\em Applied Mathematical Sciences}.
\newblock Springer-Verlag, New York, 1983.

\bibitem{PrevotRockner:2007}
Claudia Pr\'ev\^ot and Michael R\"ockner.
\newblock {\em A concise course on stochastic partial differential equations},
  volume 1905 of {\em Lecture Notes in Mathematics}.
\newblock Springer, Berlin, 2007.

\bibitem{RevuzYor:1991}
Daniel Revuz and Marc Yor.
\newblock {\em Continuous martingales and {B}rownian motion}, volume 293 of
  {\em Fundamental Principles of Mathematical Sciences}.
\newblock Springer-Verlag, Berlin, 1991.

\bibitem{RocknerSobol:2006}
Michael Roeckner and Zeev Sobol.
\newblock Kolmogorov equations in infinite dimensions: Well-posedness and
  regularity of solutions, with applications to stochastic generalized
  {B}urgers equations.
\newblock {\em Annals of probability}, 34(2):663--727, 2006.

\bibitem{Triebel:1978}
Hans Triebel.
\newblock {\em Interpolation theory, function spaces, differential operators},
  volume~18 of {\em North-Holland Mathematical Library}.
\newblock North-Holland Publishing Co., Amsterdam-New York, 1978.

\bibitem{NeervenVeraarWeis:2008}
Jan van Neerven, Mark Veraar, and Lutz Weis.
\newblock Stochastic evolution equations in {UMD} {B}anach spaces.
\newblock {\em J. Funct. Anal.}, 255(4):940--993, 2008.

\bibitem{Yagi:2010}
Atsushi Yagi.
\newblock {\em Abstract parabolic evolution equations and their applications}.
\newblock Springer Monographs in Mathematics. Springer-Verlag, Berlin, 2010.

\end{thebibliography}

\end{document}